\newtheorem{theorem}{Theorem}[section]
\newtheorem{lemma}[theorem]{Lemma}
\newtheorem{prop}[theorem]{Proposition}
\newtheorem{cor}[theorem]{Corollary}
\theoremstyle{definition}
\theoremstyle{remark}
\newtheorem{remark}[theorem]{\bf{Remark}}
\numberwithin{equation}{section}
\begin{document}
\title[]
{\small{ $A$-Davis-Wielandt Radius Bounds of Semi-Hilbertian Space Operators}}

	\author[]
 {Messaoud Guesba, Somdatta Barik, Pintu Bhunia, Kallol Paul}
	
	\address[Guesba]{Faculty of Exact Sciences, Department of Mathematics, El Oued
		University, 39000 Algeria}
	\email{guesbamessaoud2@gmail.com}

\address[Barik] {Department of Mathematics, Jadavpur University, Kolkata 700032, West Bengal, India}
\email{bariksomdatta97@gmail.com}
\address[Bhunia]{Department of Mathematics, Indian Institute of Science, Bengaluru 560012, Karnataka, India}
\email{pintubhunia5206@gmail.com}

\address[Paul] {Department of Mathematics, Jadavpur University, Kolkata 700032, West Bengal, India}
\email{kalloldada@gmail.com}

\thanks{ The authors would like to acknowledge the referee for his/her valuable comments towards improve the article. Miss Somdatta Barik would like to thank UGC, Govt. of India for the financial support in the form of JRF under the mentorship of Prof. Kallol Paul.  Dr. Pintu Bhunia would like to thank SERB, Govt. of India for the financial support in the form of National Post Doctoral Fellowship (N-PDF, File No. PDF/2022/000325) under the mentorship of Prof. Apoorva Khare}

\subjclass[2010]{47A05, 47A12, 47A30, 47B15}
\keywords{\textbf{\ } Positive operator, semi-inner product, $A$-Davis-Wielandt radius, seminorm}
\maketitle

\begin{abstract}
Consider $\mathcal{H}$ is a complex Hilbert space and $A$ is a positive operator on $\mathcal{H}.$
 The mapping $\langle\cdot,\cdot\rangle_A: \mathcal{H}\times \mathcal{H} \to \mathbb {C}$, defined as $\left\langle y,z\right\rangle
_{A}=\left\langle  Ay,z\right\rangle $ for all $y,z$ $\in $ ${\mathcal{H}}$,
induces a seminorm $ \left\Vert \cdot\right\Vert _{A}$. The $A$-Davis-Wielandt radius of an
operator $S$ on $\mathcal{H}$ is defined as
$d\omega _{A}\left( S\right) =\sup \left\{  \sqrt{\left\vert \left\langle
Sz,z\right\rangle _{A}\right\vert ^{2}+\left\Vert Sz\right\Vert _{A}^{4}}
:\left\Vert z\right\Vert _{A}=1\right\} \text{.}
$ 
We investigate some new bounds for $d\omega _{A}\left( S\right)$ which refine the existing bounds. We also give some bounds
for the $2\times 2$ off-diagonal block 
matrices.

\ \ \ \ \ \ \ \ \ \ \ \ \ \ \ \ \ \ \ \ \ \ \ \ \ \ \ \ \ \ \ \ \ \ \ \ \ \ 
\ \ \ \ \ \ \ \ \ \ \ \ \ \ \ \ \ \ \ \ \ \ \ \ \ \ \ \ \ \ \ \ \ \ \ \ \ 

\ \ \ \ \ \ \ \ \ \ \ \ \ \ \ \ \ \ \ \ \ \ \ \ \ \ \ \ \ \ \ \ \ \ \ \ \ \ 
\ \ \ \ \ \ \ \ \ \ \ \ \ \ \ \ \ \ \ \ \ \ \ \ \ \ \ \ \ \ \ \ \ \ \ \ \ \ 
\ \ \ \ \ \ \ \ \ \ \ \ \ \ \ \ \ \ \ \ \ \ \ \ \ \ \ \ \ \ \ \ \ \ \ \ \ \ 
\ \ \ \ \ \ \ \ \ \ \ \ \ \ \ \ \ \ \ \ \ \ \ \ \ \ \ \ \ \ \ \ \ \ \ \ \ \ 
\ \ \ \ \ \ \ \ \ \ \ \ \ \ \ \ \ \ \ \ \ \ \ \ \ \ 
\end{abstract}

\section{\textbf{Introduction and preliminaries}{\protect\Large \ }}

\noindent Here we collect all the technical ingredients that are
necessary to read the paper. 
Throughout, ${\mathcal{H}}$ is a complex Hilbert space and  ${\mathcal{B}}({\mathcal{H}})$ is the $\mathbb{C}^*$-algebra
of bounded operators on ${\mathcal{H}}$. For
 $S\in $ ${\mathcal{B}}({\mathcal{H}})$, the null space, range space
and closure of the range space of $S$ are respectively denoted as ${\mathcal{N}}\left(
S\right) $, ${\ \mathcal{R}}\left( S\right) $ and $\overline{{\mathcal{R}}
\left( S\right) }.$  
A positive operator $A\in {\mathcal{B}}({\mathcal{H}})^{}$ defines a sesquilinear form 
$
\left\langle \cdot ,\cdot \right\rangle _{A}:{\mathcal{H}}\times {\mathcal{H}
}\rightarrow 
\mathbb{C}
\text{ defined as }\left\langle x,z\right\rangle _{A} =\left\langle
Ax,z\right\rangle \text{.}
$
This gives a seminorm $\left\Vert
\cdot\right\Vert _{A}$ defined by
$
\left\Vert z\right\Vert _{A}=\left\Vert A^{\frac{1}{2}}z\right\Vert \text{.}
$
Clearly, $\left\Vert z\right\Vert _{A}=0$ iff $z\in {\mathcal{
 N}}\left( A\right) $. Therefore, $\left\Vert \cdot\right\Vert _{A}$ is a norm
on ${\ \mathcal{H}}$ iff $A$ is one-one.
Suppose
$\left\Vert S\right\Vert _{A}=\sup\limits_{\substack{ z\in \overline{{\ 
\mathcal{R}}\left( A\right) }  \\ z\neq 0}}\frac{\left\Vert Sz\right\Vert
_{A}}{\left\Vert z\right\Vert _{A}}=\sup\limits_{\substack{ z\in \overline{{%
\ \ \mathcal{R}}\left( A\right) }  \\ \left\Vert z\right\Vert _{A}=1}}
\left\Vert Sz\right\Vert _{A} \text{}$ is the $A$-operator seminorm of $S\in {\mathcal{B}}({\mathcal{H}})$. 
If there is a scalar $c>0$ such that $\left\Vert Sz\right\Vert _{A}\leq c\left\Vert
z\right\Vert _{A}$ for all $z\in \overline{{\mathcal{R}}\left( A\right) }$,
then $\left\Vert S\right\Vert _{A}<\infty $.
If $\|S\|_A<\infty$, then
$
\left\Vert S\right\Vert _{A}=\sup \left\{ \left\vert \left\langle
Sx,z\right\rangle _{A}\right\vert \text{: }x,z\in \overline{{\mathcal{R}}
\left( A\right) }, \, \left\Vert x\right\Vert _{A}=\left\Vert
z\right\Vert _{A}=1\text{ }\right\} \text{.}
$
An operator $R\in {\mathcal{B}}({ 
\mathcal{H}})$ is $A$-adjoint of $S\in {\mathcal{B}}({\mathcal{H}})$  if 
$
\left\langle Sx,z\right\rangle _{A}=\left\langle x,Rz\right\rangle _{A}\text{
for all }x,z\in {\mathcal{H}}\text{,}
$
or equivalently, 
$
AR=S^{\ast }A\text{.}
$
 If $AS=S^{\ast }A$ then $S$ is $A$-selfadjoint and if $AS$ is positive then $S$ is
 $A$-positive and we write $S\geq _{A}0$.
 Consider  ${\mathcal{B}}_{A}({\  
\mathcal{H}})$ is the collection of
operators that admit $A$-adjoints.
From Douglas result \cite{DO}, we have
$
{\mathcal{B}}_{A}({\mathcal{H}}) =\left\{ S\in {\mathcal{B}}({\mathcal{H}}
):{\mathcal{R}}\left( S^{\ast }A\right) \subseteq {\mathcal{R}}\left( A\right)
\right\} 
= \left\{ S\in {\mathcal{B}}({\mathcal{H}}):\text{ $\exists$}~ c>0:\left\Vert
ASz\right\Vert \leq c\left\Vert Az\right\Vert ,\forall \text{ }z\in {\ 
\mathcal{H}}\right\} \text{.}
$
For $S\in {\mathcal{B}}_{A}({\mathcal{H}})$, $S$ admits $A$-adjoint. Furthermore, there exists a distinguished $A$-adjoint of $S$, namely, the reduced solution of  $AZ=S^{\ast }A$, i.e., $
S^{\sharp _{A}}=A^{\dag }S^{\ast }A$, where $A^{\dag }$ is the Moore-Penrose inverse of $A$. The operator $S^{\sharp _{A}}$ satisfies 
$
AS^{\sharp _{A}}=S^{\ast }A\text{, }{\mathcal{R}}\left( S^{\sharp
_{A}}\right) \subseteq \overline{{\mathcal{R}}\left( A\right) }\text{, }{\ 
\mathcal{N}}\left( S^{\sharp _{A}}\right) ={\mathcal{N}}\left( S^{\ast
}A\right) \text{.}
$
Again, by Douglas result, we can see that
$
{\mathcal{B}}_{A^{{1}/{2}}}({\mathcal{H}}) = \{ S\in {\mathcal{B}}({ 
\mathcal{H}}):\text{$\exists$}~ c>0:\left\Vert Sz\right\Vert _{A}\leq c\left\Vert
z\right\Vert _{A},\forall \text{ }z\in {\mathcal{H}} \} \text{.}
$
If $S\in 
{\mathcal{B}}_{A^{{1}/{2}}}({\mathcal{H}})$, 
 $
\left\Vert S \right\Vert _{A} =\sup\limits_{z\notin {\mathcal{N}}\left(
A\right) }\frac{\left\Vert Sz\right\Vert _{A}}{\left\Vert z\right\Vert _{A}}
=\sup\limits_{\substack{ z\in {\mathcal{H}}  \\ \left\Vert z\right\Vert
_{A}=1 }}\left\Vert Sz\right\Vert _{A}\text{,}
$ see \cite{AR2}.
If $S \in {\mathcal{B}}_{A^{{1}/{2}}}({\mathcal{H}})$ then $S\left( {\mathcal{N}}\left(
A\right) \right) \subseteq $ ${\mathcal{N}}\left( A\right) $ and 
 $
\left\Vert Sz\right\Vert _{A}\leq \left\Vert S\right\Vert _{A}\left\Vert
z\right\Vert _{A}\text{ for all} \text{ }z\in {\mathcal{H}}\text{.}
$
The sets ${\mathcal{B}}_{A}({\mathcal{H}})$ and ${\mathcal{B}}_{A^{{1}/{
2}}}({\mathcal{H}})$ are subalgebras
 (see 
\cite{AR2}) and 
$
{\mathcal{B}}_{A}({\mathcal{H}})\subseteq {\mathcal{B}}_{A^{{1}/{2}}}({\ 
\mathcal{H}}).$
Note that $S$ is $A$-selfadjoint, but $
S=S^{\sharp _{A}}$ may not be true. For example, consider $A=\left( 
\begin{array}{cc}
1 & 1 \\ 
1 & 1%
\end{array}%
\right) \in {\mathcal{M}}_{2}$\ $\left( 
\mathbb{C}
\right) ^{+}$\ \ and $S=$\ $\left( 
\begin{array}{cc}
2 & 2 \\ 
0 & 0%
\end{array}%
\right) $. Here $S$ is $A$-selfadjoint, however $S^{\sharp _{A}}=$ $\left( 
\begin{array}{cc}
1 & 1 \\ 
1 & 1%
\end{array}%
\right) \neq $ $S$. Note that, $S=S^{\sharp _{A}}$ iff  ${\mathcal{R}}\left( S^{\sharp _{A}}\right) \subseteq $ ${
 \mathcal{R}}\left( A\right) $ and $S$  is $
A $-selfadjoint.  An operator $S$ in ${\mathcal{B}}_{A}({%
\mathcal{H}})$ is $A$-unitary if $\|Sx\|_A=\|S^{\sharp_A}x\|_A=\|x\|_A,$ $
\forall x\in\mathcal{H}.$
For more we refer to \cite{AR1, AR2}. If $S\in {\mathcal{B}}_{A}({\mathcal{H
}})$ then the following holds:

(1) If $AS=SA$, then $S^{\sharp _{A}}=P_{\overline{\mathcal{R }\left(
A\right) } }S^{\ast },$ where $P_{\overline{\mathcal{R }\left( A\right) } }$
denote the orthogonal projection
on $\overline{\mathcal{R }\left( A\right) }.$

(2) $S^{\sharp _{A}}S$, $SS^{\sharp _{A}}$ are $A$-selfadjoint and are $A$-positive.

(3) $S^{\sharp _{A}}\in {\mathcal{B}}_{A}({\mathcal{H}})$, $\left( S^{\sharp
_{A}}\right) ^{\sharp _{A}}=P_{\overline{\mathcal{R }\left( A\right) }}SP_{ 
\overline{\mathcal{R }\left( A\right) }}$, $\left( \left( S^{\sharp
_{A}}\right) ^{\sharp _{A}}\right) ^{^{\sharp _{A}}}=S^{\sharp _{A}}$.

(4) If $R\in {\mathcal{B}}_{A}({\mathcal{H}})$ then $SR\in {\mathcal{B}}_{A}(%
{\mathcal{H}})$ and $\left( SR\right) ^{\sharp _{A}}=R^{\sharp_{A}}S^{^{\sharp _{A}}}$. Furthermore, for all $S, R\in {\mathcal{B}}_{A^{{1}/{2}}}({\mathcal{H}})$, we have
\begin{equation}
\left\Vert SR\right\Vert _{A}\leq \left\Vert S\right\Vert _{A}\left\Vert
R\right\Vert _{A}\text{.}  \label{2}
\end{equation}

(5) $\left\Vert S^{\sharp _{A}}\right\Vert
_{A}= \left\Vert S\right\Vert _{A}= \left\Vert S^{\sharp _{A}}S\right\Vert _{A}^{\frac{1}{2}}=\left\Vert
SS^{\sharp _{A}}\right\Vert _{A}^{\frac{1}{2}}$.

To simplify we write $P$ instead of $P_{\overline{
\mathcal{R }\left( A\right) }}$ . An operator  $R$ in $ {\mathcal{B}}_{A}({\mathcal{H}}%
)$ can be written as $R=\Re_A (R)+i\Im_A(R),$ where $\Re_A (R)=\frac{%
R+R^{\sharp_{A}}}{2}$ and $\Im_A (R)=\frac{R-R^{\sharp_{A}}}{2i}.$ 
Recall that, 
$\omega \left( S\right) :=\sup  \{ |\left\langle Sz,z\right\rangle| :z\in {
 \mathcal{H}}, \left\Vert z\right\Vert =1 \} \text{}$ is the numerical radius of $S\in {\mathcal{B}}({%
\mathcal{H}})$,
 see \cite{GUS}.
For more facts, see \cite{D3,GUE4,KI3,AL} and references therein.
One of the 
generalization of the numerical radius is $A$-numerical radius of  $S\in {\mathcal{B}%
}_{A}({\mathcal{H}})$, it is defined by
$
\omega _{A}\left( S\right) :=\sup \left\{ \left\vert \left\langle
Sz,z\right\rangle _{A}\right\vert :z\in {\mathcal{H}}\text{, }\left\Vert
z\right\Vert _{A}=1\right\} \text{.}
$
Obviously, for $A=I$ we obtain the (classical) numerical radius which received considerable attention in recent years by
many researchers. We refer to \cite{BH,BH2,GUE3,GUE1} and
references therein.
Clearly,
$
\omega _{A}\left( S\right) =\omega _{A}\left( S^{\sharp _{A}}\right) \text{
for any }S\in {\mathcal{B}}_{A}({\mathcal{H}})\text{.}
$
Also, the $A$-numerical radius satisfies the power
inequality (see \cite{MOS, ZAM2}): for $S\in {\mathcal{B}}%
_{A^{{1}/{2}}}({\mathcal{H}})$, 
$
\omega _{A}\left( S^{n}\right) \leq \omega _{A}^{n}\left( S\right) \text{, \ 
}n=1,2,...\text{.}
$
It defines a seminorm on ${\mathcal{B}}_{A^{1/2}}(%
{\mathcal{H}})$, and  it satisfies 
\begin{equation}
\frac{1}{2}\left\Vert S \right\Vert _{A}\leq \omega _{A}\left( S\right) \leq
\left\Vert S\right\Vert _{A} \quad \text{for every $S\in {\mathcal{B}}_{A^{{1}/{2}}}
({\mathcal{H}}) $}\text{.}  \label{00}
\end{equation}
Here 
$
\omega _{A}\left( S \right) =\left\Vert S \right\Vert _{A}\text{}
$ if $S$ is $A$-selfadjoint.  
Recall that, the $A$-Crawford number of $S\in {\mathcal{B}}_{A^{1/2}}({\mathcal{H}})$ is
defined by 
$
c_{A}\left( S\right) =\inf \left\{ \left\vert \left\langle Sz,z\right\rangle
_{A}\right\vert :z\in {\mathcal{H}}\text{,}\left\Vert z\right\Vert
_{A}=1\right\} \text{.}
$
For more basic facts, we refer
 to \cite{GUE5,Sadi, ZAM2}.
The Davis-Wielandt radius of $S\in {
 \mathcal{B}}({\mathcal{H}})\mathcal{\ }$ is defined as
$
d\omega \left( S\right) =\sup \left\{ \left( {\left\vert \left\langle
Sz,z\right\rangle \right\vert ^{2}+\left\Vert Tz\right\Vert ^{4}}\right)^{1/2}: z\in {\ 
\mathcal{H}}\text{, }\left\Vert z\right\Vert =1\right\} \text{.}
$
For more about Davis-Wielandt radius we refer to \cite{BH02,BH4,BH3,Li,ZA3,ZA4}.
The $A$-Davis-Wielandt radius of
 $S\in {\mathcal{B}}_{A^{{1}/{2}}}({\mathcal{H}})$ is
defined (see \cite{BHA}) as
$
d\omega _{A}\left( S\right) =\sup \left\{ \left({\left\vert \left\langle
Sz,z\right\rangle _{A}\right\vert ^{2}+\left\Vert Sz\right\Vert _{A}^{4}}\right)^{1/2}
:z\in {\mathcal{H}}\text{, }\left\Vert z\right\Vert _{A}=1\right\} \text{.}
$
It satisfies 
$
\max\left\{ \omega _{A}\left( S\right) ,\left\Vert S\right\Vert
_{A}^{2}\right\} \leq d\omega _{A}\left( S\right) \leq \sqrt{\omega
_{A}^{2}\left( S\right) +\left\Vert S\right\Vert _{A}^{4}}\text{.}
$

Very recently some results on the $A$-Davis-Wielandt radius are studied (see \cite{BHA}). In this manuscript we continue the study in
this direction and we provide some new bounds of the $A$-Davis-Wielandt radius. Further, we give some new bounds for the Davis-Wielandt radius of  $2\times 2$ off-diagonal block matrices.

\section{Bounds for the $A$-Davis-Wielandt radius of operators}

To develop our first bound of $A$-Davis-Wielandt radius, the following inner-product inequality is necessary.

  
  \begin{lemma}
  	\cite{BSBP1,KZB} \label{KZ} If $a,b,c\in {\mathcal{H}}$ and $
  	\left\Vert c\right\Vert _{A}=1$ and $0\neq \alpha\in \mathbb {C}
   $, then  
  	\begin{equation*}
  		\left\vert \left\langle a,c\right\rangle _{A}\left\langle c,b\right\rangle
  		_{A}\right\vert \leq \frac{1}{|\alpha|}\Big( \max \{1, |\alpha-1|\}\left\Vert a\right\Vert
  		_{A}\left\Vert b\right\Vert _{A}+\left\vert \left\langle a,b\right\rangle
  		_{A}\right\vert \Big) \text{.}
  	\end{equation*}
  	In particular, for $\alpha =2,$ 
  	$$   \left\vert \left\langle a,c\right\rangle _{A}\left\langle c,b\right\rangle
  	_{A}\right\vert \leq \frac{1}{2}\Big( \left\Vert a\right\Vert
  	_{A}\left\Vert b\right\Vert _{A}+\left\vert \left\langle a,b\right\rangle
  	_{A}\right\vert \Big) \text{.}$$
  \end{lemma}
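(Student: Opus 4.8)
The plan is to reduce this $A$-semi-inner-product inequality to its classical analogue in a genuine inner product space, so that no new computation is needed. The standard device here is to pass to the Hilbert space quotient $\mathcal{H}/\mathcal{N}(A)$ completed under $\langle\cdot,\cdot\rangle_A$, or equivalently to work directly with the vectors $A^{1/2}a$, $A^{1/2}b$, $A^{1/2}c$ in $\mathcal{H}$: indeed $\langle x,y\rangle_A=\langle A^{1/2}x,A^{1/2}y\rangle$ for all $x,y$, so every semi-inner product appearing in the statement equals an honest inner product of the corresponding $A^{1/2}$-images, and $\|c\|_A=1$ becomes $\|A^{1/2}c\|=1$. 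Thus it suffices to prove the inequality in the case $A=I$, i.e. to show that for $u,v,w\in\mathcal{H}$ with $\|w\|=1$ and $0\neq\alpha\in\mathbb{C}$,
\begin{equation*}
|\langle u,w\rangle\langle w,v\rangle|\leq \frac{1}{|\alpha|}\Big(\max\{1,|\alpha-1|\}\,\|u\|\,\|v\|+|\langle u,v\rangle|\Big),
\end{equation*}
and this is exactly the cited classical lemma of Kittaneh--Zamani (references \cite{BSBP1,KZB}), which we may invoke.

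If instead one wants a self-contained argument, the key step is the identity
\begin{equation*}
\langle u,w\rangle\langle w,v\rangle=\langle u,v\rangle-\langle u,(I-ww^*)v\rangle,
\end{equation*}
valid because $ww^*$ is the rank-one orthogonal projection onto $\mathrm{span}\{w\}$ when $\|w\|=1$, so $I-ww^*$ is also an orthogonal projection. More flexibly, for any scalar $\alpha$ one writes $\langle u,w\rangle\langle w,v\rangle=\frac{1}{\alpha}\big(\langle u,v\rangle-\langle u,(I-\alpha\, ww^*)v\rangle\big)$; taking absolute values and using $\|(I-\alpha\,ww^*)v\|\leq \|I-\alpha\,ww^*\|\,\|v\|$ together with the easy spectral computation $\|I-\alpha\,ww^*\|=\max\{1,|\alpha-1|\}$ (the operator $I-\alpha\,ww^*$ acts as $1-\alpha$ on $\mathrm{span}\{w\}$ and as $1$ on its orthocomplement) yields the bound. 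The particular case $\alpha=2$ gives $\max\{1,|\alpha-1|\}=1$ and hence the stated refinement with constant $\tfrac12$.

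Translating back: setting $u=A^{1/2}a$, $v=A^{1/2}b$, $w=A^{1/2}c$ converts $\|u\|\|v\|$ into $\|a\|_A\|b\|_A$, $|\langle u,v\rangle|$ into $|\langle a,b\rangle_A|$, and the left side into $|\langle a,c\rangle_A\langle c,b\rangle_A|$, which is precisely the claimed inequality; the specialization $\alpha=2$ is then immediate. The only point requiring a little care is that the vectors $A^{1/2}a,A^{1/2}b,A^{1/2}c$ need not lie in $\overline{\mathcal{R}(A)}$ issues — but here there is no obstruction at all, since the classical inequality holds for arbitrary vectors in any Hilbert space, so no restriction on the ranges is needed. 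In short, the main (and essentially only) obstacle is recognizing that the semi-Hilbertian statement is a direct pullback of the classical one via $A^{1/2}$; once that is seen, the proof is a one-line substitution or, if one prefers, a two-line projection-norm estimate.
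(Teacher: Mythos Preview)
The paper does not prove this lemma at all; it is quoted from the cited references \cite{BSBP1,KZB} and used as a black box. So there is no ``paper's own proof'' to compare against.

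That said, your argument is correct. The reduction via $A^{1/2}$ is valid because $\langle x,y\rangle_A=\langle A^{1/2}x,A^{1/2}y\rangle$ identically, so the statement for $(\mathcal{H},\langle\cdot,\cdot\rangle_A)$ is literally the classical statement applied to the triple $(A^{1/2}a,A^{1/2}b,A^{1/2}c)$ in $(\mathcal{H},\langle\cdot,\cdot\rangle)$, with no range or density issues. Your self-contained sketch is also fine: writing $\alpha\,\langle u,w\rangle\langle w,v\rangle=\langle u,v\rangle-\langle u,(I-\alpha\,ww^*)v\rangle$ and bounding the second term by $\|I-\alpha\,ww^*\|\,\|u\|\,\|v\|$ is exactly the standard proof of the extended Buzano inequality, and the spectral computation $\|I-\alpha\,ww^*\|=\max\{1,|\alpha-1|\}$ is correct since $ww^*$ is a rank-one orthogonal projection (the operator is normal, with eigenvalues $1$ and $1-\alpha$). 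The specialization $\alpha=2$ is immediate.

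One cosmetic point: the sentence about ``$\overline{\mathcal{R}(A)}$ issues'' is slightly garbled, but your conclusion is right --- the classical inequality needs no restriction on the vectors, so nothing special is required of $A^{1/2}a,A^{1/2}b,A^{1/2}c$.
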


\begin{theorem}\label{theo1}
Let $S\in {\mathcal{B}}_{A}({\mathcal{H}})$ and $0\neq \alpha\in \mathbb {C}$. Then  \begin{equation*}
d\omega _{A}^{2}\left( S \right) \leq \omega _{A}^{2}\left( S^{\sharp _{A}}S+S\right) +\frac{2}{|\alpha|}\omega _{A}\left(
S^{\sharp _{A}}S^{2}\right) +\frac{\max \{1, |\alpha-1|\}}{|\alpha|}\left\Vert \left( S^{\sharp_{A}}S\right) ^{2}+S^{\sharp _{A}}S\right\Vert _{A}\text{.}
\end{equation*}
\end{theorem}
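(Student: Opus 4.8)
The plan is to fix a vector $z$ with $\|z\|_A=1$, estimate $|\langle Sz,z\rangle_A|^2+\|Sz\|_A^4$ at this $z$, and then take the supremum over all such $z$. Write $\mu=\langle Sz,z\rangle_A\in\mathbb{C}$ and observe that $\|Sz\|_A^2=\langle S^{\sharp_A}Sz,z\rangle_A=:t$ is a nonnegative real number, since $S^{\sharp_A}S\ge_A 0$. The starting point is the elementary identity $|\mu|^2+t^2=|\mu+t|^2-2t\,\Re\mu$, hence $|\mu|^2+t^2\le|\mu+t|^2+2t|\mu|$. Because $\mu+t=\langle(S^{\sharp_A}S+S)z,z\rangle_A$, the first term is at most $\omega_A^2(S^{\sharp_A}S+S)$, which will provide the first summand of the claimed bound.

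For the term $2t|\mu|=2\|Sz\|_A^2\,|\langle Sz,z\rangle_A|$, I would rewrite it as $2\,|\langle Sz,z\rangle_A\,\langle z,S^{\sharp_A}Sz\rangle_A|$, using that $\langle z,S^{\sharp_A}Sz\rangle_A=\|Sz\|_A^2$, and then apply Lemma \ref{KZ} with $a=Sz$, $b=S^{\sharp_A}Sz$ and $c=z$. This yields $2t|\mu|\le\frac{2}{|\alpha|}\left(\max\{1,|\alpha-1|\}\,\|Sz\|_A\,\|S^{\sharp_A}Sz\|_A+|\langle Sz,S^{\sharp_A}Sz\rangle_A|\right)$. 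Next I would simplify the two resulting quantities via the $\sharp_A$-calculus. Put $T=S^{\sharp_A}S$; by property (2) in the preliminaries it is $A$-selfadjoint with $T^{\sharp_A}=T$, so $\langle Sz,S^{\sharp_A}Sz\rangle_A=\overline{\langle z,S^{\sharp_A}S^{2}z\rangle_A}=\langle S^{\sharp_A}S^{2}z,z\rangle_A$, which is at most $\omega_A(S^{\sharp_A}S^2)$ because $\|z\|_A=1$. For the other quantity, $\|Sz\|_A^2=\langle Tz,z\rangle_A$ while $\|S^{\sharp_A}Sz\|_A^2=\|Tz\|_A^2=\langle T^2z,z\rangle_A$, so the arithmetic--geometric mean inequality gives $\|Sz\|_A\,\|S^{\sharp_A}Sz\|_A\le\frac{1}{2}\langle(T+T^2)z,z\rangle_A\le\frac{1}{2}\|(S^{\sharp_A}S)^2+S^{\sharp_A}S\|_A$, the last step because $T+T^2$ is $A$-selfadjoint (indeed $A$-positive), whence its $A$-numerical radius equals its $A$-seminorm.

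Combining the three estimates and passing to the supremum over $\{z:\|z\|_A=1\}$ gives exactly $d\omega_A^2(S)\le\omega_A^2(S^{\sharp_A}S+S)+\frac{2}{|\alpha|}\omega_A(S^{\sharp_A}S^2)+\frac{\max\{1,|\alpha-1|\}}{|\alpha|}\|(S^{\sharp_A}S)^2+S^{\sharp_A}S\|_A$. The step that needs the most care is the $\sharp_A$-bookkeeping, in particular the identity $\langle Sz,S^{\sharp_A}Sz\rangle_A=\langle S^{\sharp_A}S^{2}z,z\rangle_A$, which rests on $(S^{\sharp_A}S)^{\sharp_A}=S^{\sharp_A}S$ together with the facts about how the projection $P$ is absorbed, both recorded in the preliminaries; one should also verify that discarding the sign in $-2t\,\Re\mu\le 2t|\mu|$ does not damage the constants. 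The remaining manipulations are routine.
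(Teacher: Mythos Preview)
Your proof is correct and follows essentially the same route as the paper: expand $|\mu|^2+t^2=|\mu+t|^2-2t\,\Re\mu$, bound the cross term via Lemma~\ref{KZ}, and finish with AM--GM and the $A$-selfadjointness of $S^{\sharp_A}S$. One small inaccuracy: you write that $T=S^{\sharp_A}S$ satisfies $T^{\sharp_A}=T$, but as the preliminaries emphasize, $A$-selfadjoint does not imply this in general; your argument only needs (and only uses) that $T$ is $A$-selfadjoint, i.e., $\langle Tx,y\rangle_A=\langle x,Ty\rangle_A$, so the proof stands.
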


\begin{proof}
Take $z\in {\mathcal{H}}$ and $\|z\|_A=1$. We obtain  
\begin{eqnarray*}
&&\left\vert \left\langle Sz,z\right\rangle _{A}\right\vert ^{2}+\left\Vert
Sz\right\Vert _{A}^{4} \\
&=&\left\vert \left\langle Sz,z\right\rangle _{A}+\left\langle
Sz,Sz\right\rangle _{A}\right\vert ^{2}-2\Re_A\left( \left\langle
Sz,Sz\right\rangle _{A}\left\langle Sz,z\right\rangle _{A}\right) \\
&\leq &\left\vert \left\langle \left( S+S^{\sharp _{A}}S\right) z,z\right\rangle
_{A}\right\vert ^{2}+2\left\vert \left\langle S^{\sharp
_{A}}Sz,z\right\rangle _{A}\left\langle z,Sz\right\rangle _{A}\right\vert \\
&\leq &\left\vert \left\langle \left( S+S^{\sharp _{A}}S\right)
z,z\right\rangle _{A}\right\vert ^{2}+\frac{2}{|\alpha|} \Big[\max \{1, |\alpha-1|\}\left\Vert S^{\sharp
_{A}}Sz\right\Vert _{A}\left\Vert Sz\right\Vert _{A}+\left\vert \left\langle
S^{\sharp _{A}}Sz,Sz\right\rangle _{A}\right\vert\Big] \\
&&\text{(by Lemma \ref{KZ})} \\
&\leq &\left\vert \left\langle \left( S+S^{\sharp _{A}}S\right)
z,z\right\rangle _{A}\right\vert ^{2}+\frac{\max \{1, |\alpha-1|\}}{|\alpha|} \left( \left\Vert S^{\sharp
	_{A}}Sz\right\Vert _{A}^{2}+\left\Vert Sz\right\Vert _{A}^{2}\right)+\frac{2}{|\alpha|}\left\vert \left\langle
S^{\sharp _{A}}Sz,Sz\right\rangle _{A}\right\vert \\
&&\text{(by the AM-GM inequality)} \\
&=&\left\vert \left\langle \left( S+S^{\sharp _{A}}S\right) z,z\right\rangle
_{A}\right\vert ^{2}+\frac{\max \{1, |\alpha-1|\}}{|\alpha|}\left( \left\langle S^{\sharp
_{A}}Sz,S^{\sharp _{A}}Sz\right\rangle _{A}+\left\langle S^{\sharp
_{A}}Sz,z\right\rangle _{A}\right) \\
&&+\frac{2}{|\alpha|}\left\vert \left\langle z,S^{\sharp
_{A}}S^{2}z\right\rangle _{A}\right\vert \\
&=&\left\vert \left\langle \left( S+S^{\sharp _{A}}S\right) z,z\right\rangle
_{A}\right\vert ^{2}+\frac{\max \{1, |\alpha-1|\}}{|\alpha|}\left( \left\langle \left( S^{\sharp
_{A}}S\right) ^{2}z,z\right\rangle _{A}+\left\langle S^{\sharp
_{A}}Sz,z\right\rangle _{A}\right) \\
&&+\frac{2}{|\alpha|}\left\vert \left\langle z,S^{\sharp
_{A}}S^{2}z\right\rangle _{A}\right\vert \\
&\leq &\omega _{A}^{2}\left( S^{\sharp _{A}}S+S\right) +\frac{2}{|\alpha|}\omega _{A}\left(
S^{\sharp _{A}}S^{2}\right) +\frac{\max \{1, |\alpha-1|\}}{|\alpha|}\left\Vert \left( S^{\sharp
_{A}}S\right) ^{2}+S^{\sharp _{A}}S\right\Vert _{A}\text{.}
\end{eqnarray*}
This gives the bound as desired. 
\end{proof}

Now from Theorem \ref{theo1} (for $\alpha=2$), we deduce the bound:
\begin{cor}\label{KZ1}
	If $S\in {\mathcal{B}}_{A}({\mathcal{H}})$ then
	\begin{equation*}
		d\omega _{A}^{2}\left( S\right) \leq \omega _{A}^{2}\left( S^{\sharp
			_{A}}S+S\right) +\omega _{A}\left( S^{\sharp _{A}}S^{2}\right) +\frac{1}{2}
		\left\Vert \left( S^{\sharp _{A}}S\right) ^{2}+S^{\sharp _{A}}S\right\Vert
		_{A}\text{.}
	\end{equation*}
\end{cor}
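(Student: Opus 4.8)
\textit{Plan.} The statement is precisely the case $\alpha=2$ of Theorem \ref{theo1}, and since the hypothesis $S\in\mathcal{B}_A(\mathcal{H})$ is exactly the hypothesis of that theorem, I would simply substitute. Evaluating the three coefficients in the bound of Theorem \ref{theo1} at $\alpha=2$ gives $\frac{2}{|\alpha|}=1$ and $\frac{\max\{1,|\alpha-1|\}}{|\alpha|}=\frac{\max\{1,1\}}{2}=\frac{1}{2}$, so the inequality collapses to
\[
d\omega_A^2(S)\le \omega_A^2\big(S^{\sharp_A}S+S\big)+\omega_A\big(S^{\sharp_A}S^2\big)+\tfrac{1}{2}\big\|(S^{\sharp_A}S)^2+S^{\sharp_A}S\big\|_A,
\]
which is the assertion; nothing further is needed.

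For completeness I would also record a self-contained derivation that never introduces a free parameter, using only the ``in particular'' form of Lemma \ref{KZ}. Fix $z\in\mathcal{H}$ with $\|z\|_A=1$ and write $u=\langle Sz,z\rangle_A$ and $v=\langle Sz,Sz\rangle_A=\|Sz\|_A^2\ge 0$. Starting from the elementary identity $|u|^2+v^2=|u+v|^2-2\Re(uv)$, valid because $v$ is real, and using $\langle Sz,Sz\rangle_A=\langle S^{\sharp_A}Sz,z\rangle_A$ (legitimate since $S\in\mathcal{B}_A(\mathcal{H})$), the left side is $|\langle Sz,z\rangle_A|^2+\|Sz\|_A^4$ while the first term on the right is $|\langle(S+S^{\sharp_A}S)z,z\rangle_A|^2$. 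Bounding $-2\Re(uv)\le 2|uv|=2\big|\langle S^{\sharp_A}Sz,z\rangle_A\langle z,Sz\rangle_A\big|$ and invoking Lemma \ref{KZ} with $a=S^{\sharp_A}Sz$, $c=z$, $b=Sz$ in the $\alpha=2$ form yields
\[
|\langle Sz,z\rangle_A|^2+\|Sz\|_A^4\le |\langle(S+S^{\sharp_A}S)z,z\rangle_A|^2+\|S^{\sharp_A}Sz\|_A\|Sz\|_A+|\langle S^{\sharp_A}Sz,Sz\rangle_A|.
\]
Then AM--GM on $\|S^{\sharp_A}Sz\|_A\|Sz\|_A$, together with the bookkeeping identities $\|S^{\sharp_A}Sz\|_A^2=\langle(S^{\sharp_A}S)^2z,z\rangle_A$ (using that $S^{\sharp_A}S$ is $A$-selfadjoint), $\|Sz\|_A^2=\langle S^{\sharp_A}Sz,z\rangle_A$, and $\langle S^{\sharp_A}Sz,Sz\rangle_A=\langle z,S^{\sharp_A}S^2z\rangle_A$, rewrites everything in terms of inner products of the form $\langle Rz,z\rangle_A$; taking the supremum over $\|z\|_A=1$ and recognizing the $\omega_A$- and $\|\cdot\|_A$-terms (with $\langle(S^{\sharp_A}S)^2z,z\rangle_A+\langle S^{\sharp_A}Sz,z\rangle_A\le \|(S^{\sharp_A}S)^2+S^{\sharp_A}S\|_A$ for the last term) gives the claim.

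I do not expect a genuine obstacle here. The only point requiring a little care is the collection of algebraic identities that move $S$ and $S^{\sharp_A}$ across the $A$-inner product; these are valid because $\mathcal{B}_A(\mathcal{H})$ is a $\sharp_A$-closed subalgebra and $S^{\sharp_A}S$ is $A$-selfadjoint and $A$-positive (item (2) of the preliminaries), and the $A$-positivity is also what keeps $v=\|Sz\|_A^2$ real so that $|u|^2+v^2=|u+v|^2-2\Re(uv)$ is available. Given Theorem \ref{theo1} the corollary is immediate, and the parameter-free route above is only marginally longer.
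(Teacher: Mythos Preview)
Your proposal is correct and matches the paper's approach exactly: the paper simply states that the corollary follows from Theorem~\ref{theo1} by taking $\alpha=2$, with no further argument. Your additional self-contained derivation is just the proof of Theorem~\ref{theo1} specialized to $\alpha=2$, so it adds nothing genuinely new.
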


Again, considering $\alpha=n$ and $n\to \infty$ in Theorem \ref{theo1}, we get the bound:
\begin{cor}
	If $S\in {\mathcal{B}}_{A}({\mathcal{H}})$ then
	\begin{equation*}
		d\omega _{A}^{2}\left( S\right) \leq \omega _{A}^{2}\left( S^{\sharp
			_{A}}S+S\right) +
		\left\Vert \left( S^{\sharp _{A}}S\right) ^{2}+S^{\sharp _{A}}S\right\Vert
		_{A}\text{.}
	\end{equation*}
\end{cor}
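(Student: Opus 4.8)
The plan is to obtain this corollary as a direct limiting case of Theorem~\ref{theo1}, exactly as the statement suggests. First I would record that every quantity occurring on the right-hand side is finite: since $S\in {\mathcal{B}}_{A}({\mathcal{H}})$ and ${\mathcal{B}}_{A}({\mathcal{H}})$ is a subalgebra, the operators $S^{\sharp _{A}}S+S$, $S^{\sharp _{A}}S^{2}$ and $(S^{\sharp _{A}}S)^{2}+S^{\sharp _{A}}S$ all lie in ${\mathcal{B}}_{A}({\mathcal{H}})\subseteq {\mathcal{B}}_{A^{1/2}}({\mathcal{H}})$, so each has finite $A$-operator seminorm and, by \eqref{00}, finite $A$-numerical radius; similarly $d\omega _{A}(S)\le \sqrt{\omega _{A}^{2}(S)+\|S\|_{A}^{4}}<\infty$. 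This ensures that the inequality we are about to take a limit in involves only finite real numbers.

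Next I would apply Theorem~\ref{theo1} with $\alpha=n$, where $n\ge 2$ is an integer. Then $|\alpha|=n$ and $|\alpha-1|=n-1\ge 1$, so $\max\{1,|\alpha-1|\}=n-1$, and the theorem gives
\[
d\omega _{A}^{2}\left( S\right) \leq \omega _{A}^{2}\left( S^{\sharp _{A}}S+S\right) +\frac{2}{n}\,\omega _{A}\left( S^{\sharp _{A}}S^{2}\right) +\frac{n-1}{n}\left\Vert \left( S^{\sharp _{A}}S\right) ^{2}+S^{\sharp _{A}}S\right\Vert _{A}
\]
for every such $n$.

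Finally, since the left-hand side does not depend on $n$, I would let $n\to\infty$. As $\omega _{A}(S^{\sharp _{A}}S^{2})$ and $\|(S^{\sharp _{A}}S)^{2}+S^{\sharp _{A}}S\|_{A}$ are finite constants, we have $\tfrac{2}{n}\,\omega _{A}(S^{\sharp _{A}}S^{2})\to 0$ and $\tfrac{n-1}{n}\to 1$, so the right-hand side converges to $\omega _{A}^{2}(S^{\sharp _{A}}S+S)+\|(S^{\sharp _{A}}S)^{2}+S^{\sharp _{A}}S\|_{A}$, which yields the asserted bound. There is essentially no obstacle here: the only points requiring attention are the elementary bookkeeping of the coefficient $\max\{1,|\alpha-1|\}/|\alpha|=(n-1)/n$ and the verification that all terms in the intermediate inequality are finite, so that passing to the limit is legitimate.
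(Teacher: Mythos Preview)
Your proof is correct and follows exactly the route indicated in the paper: set $\alpha=n$ in Theorem~\ref{theo1} and let $n\to\infty$, using $\max\{1,|\alpha-1|\}/|\alpha|=(n-1)/n\to 1$ and $2/n\to 0$. Your additional remark that all the quantities involved are finite (so the limit is legitimate) is a welcome clarification the paper leaves implicit.
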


To get next result we need the following semi-inner product inequality.

\begin{lemma}
	\cite{KZLAA} \label{KZLAA1} If $a,b,c\in {\mathcal{H}}$ and $
	\left\Vert c\right\Vert _{A}=1$ then 
	\begin{eqnarray*}
		\left\vert \left\langle a,c\right\rangle _{A}\left\langle c,b\right\rangle
		_{A}\right\vert \leq \frac{1}{2}\left( \left\Vert a\right\Vert
		_{A}\left\Vert b\right\Vert _{A}+\left\vert \left\langle a,b\right\rangle
		_{A}\right\vert \right)-\delta(a, b, c),
	\end{eqnarray*}
		\textit{where} 
	\resizebox{.9\hsize}{!}{$\delta(a, b,c)=	\begin{cases}
			\frac{\|b\|_A}{\|a\|_A}\left( |\langle a,c\rangle_A|\underset{\lambda\in \mathbb C}{\inf}\|c-\lambda b\|_A-\frac12\underset{\mu\in \mathbb C}{\inf}\|a-\mu b\|_A\right)^2& \textit{if}\,\, \|a\|_A\|b\|_A\neq0\\
			0 & \textit{if}\,\,  \|a\|_A\|b\|_A =0.
		\end{cases}$}
\end{lemma}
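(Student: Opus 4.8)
The plan is to reduce the statement to an honest inner-product computation and then to insert a sharp Cauchy--Schwarz estimate into the operator proof of the Buzano inequality (i.e.\ into the proof of Lemma~\ref{KZ} at $\alpha=2$). First I would work in $\overline{\mathcal{R}(A)}$ with the (now positive definite) inner product $\langle\cdot,\cdot\rangle_A$, equivalently replacing $a,b,c$ by $A^{1/2}a,A^{1/2}b,A^{1/2}c$; this makes Cauchy--Schwarz and the Pythagorean identity available and changes none of the quantities in the statement. The degenerate case $\|a\|_A\|b\|_A=0$ is immediate: if $a\in\mathcal{N}(A)$ or $b\in\mathcal{N}(A)$ then $\langle a,c\rangle_A\langle c,b\rangle_A=0$ while the right-hand side equals $\tfrac12\cdot 0-0=0$. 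For the main case I would normalize: the inequality is invariant under $a\mapsto e^{i\theta}a$, $b\mapsto e^{i\theta}b$, and it scales by $t$ under $a\mapsto ta$ and by $s$ under $b\mapsto sb$ ($t,s>0$); the only scaling one must verify is $\delta(ta,b,c)=t\,\delta(a,b,c)$, which follows from $\inf_\mu\|ta-\mu b\|_A=t\inf_\mu\|a-\mu b\|_A$. So I may assume $\|a\|_A=\|b\|_A=1$, in which case the two infima are attained at $\lambda=\langle c,b\rangle_A$ and $\mu=\langle a,b\rangle_A$: writing $c_0:=c-\langle c,b\rangle_A b$ and $a_0:=a-\langle a,b\rangle_A b$ (both $A$-orthogonal to $b$) one has $\inf_\lambda\|c-\lambda b\|_A=\|c_0\|_A$, $\inf_\mu\|a-\mu b\|_A=\|a_0\|_A$, and $\delta(a,b,c)=\big(|\langle a,c\rangle_A|\,\|c_0\|_A-\tfrac12\|a_0\|_A\big)^2$.

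Next I would run the Buzano step. Put $u:=\langle a,c\rangle_A\,c-\tfrac12 a$. Since $\|a\|_A=\|c\|_A=1$, a one-line expansion gives $\|u\|_A^2=|\langle a,c\rangle_A|^2-|\langle a,c\rangle_A|^2+\tfrac14=\tfrac14$, and $\langle u,b\rangle_A=\langle a,c\rangle_A\langle c,b\rangle_A-\tfrac12\langle a,b\rangle_A$, i.e.\ $\langle a,c\rangle_A\langle c,b\rangle_A=\tfrac12\langle a,b\rangle_A+\langle u,b\rangle_A$. Hence $|\langle a,c\rangle_A\langle c,b\rangle_A|\le\tfrac12|\langle a,b\rangle_A|+|\langle u,b\rangle_A|$, and since $\|a\|_A=\|b\|_A=1$ it remains only to prove $|\langle u,b\rangle_A|\le\tfrac12-\delta(a,b,c)$; dropping $\delta$ here already recovers Lemma~\ref{KZ} at $\alpha=2$.

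The refinement is obtained by decomposing $u$ \emph{along $b$} rather than along $c$. Write $u=\langle u,b\rangle_A b+u_0$ with $\langle u_0,b\rangle_A=0$; the Pythagorean identity gives $|\langle u,b\rangle_A|^2=\tfrac14-\|u_0\|_A^2$. The crucial point is that applying $x\mapsto x-\langle x,b\rangle_A b$ to $u$ produces $u_0=\langle a,c\rangle_A c_0-\tfrac12 a_0$; expanding its $A$-norm and estimating the cross term by Cauchy--Schwarz, $\mathrm{Re}\big(\langle a,c\rangle_A\langle c_0,a_0\rangle_A\big)\le|\langle a,c\rangle_A|\,\|c_0\|_A\|a_0\|_A$, gives $\|u_0\|_A^2\ge\big(|\langle a,c\rangle_A|\,\|c_0\|_A-\tfrac12\|a_0\|_A\big)^2=\delta(a,b,c)$. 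Then $|\langle u,b\rangle_A|^2\le\tfrac14-\delta(a,b,c)$, and because $0\le\delta(a,b,c)\le\|u_0\|_A^2\le\tfrac14$ we may use the elementary inequality $\sqrt{\tfrac14-\delta}\le\tfrac12-\delta$ to conclude $|\langle u,b\rangle_A|\le\tfrac12-\delta(a,b,c)$. Combining with the previous paragraph and undoing the normalization proves the lemma.

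The genuine obstacle I expect is simply \emph{finding} the auxiliary vector $u=\langle a,c\rangle_A c-\tfrac12 a$ and realizing it must be split against $b$ (not against $c$), so that the remainder $u_0$ involves exactly the distances of $c$ and of $a$ to $\mathbb{C}b$ that appear in $\delta$; after that the argument is one Cauchy--Schwarz plus bookkeeping. The only other points requiring care are the homogeneity scaling of $\delta$, the two degenerate cases, and the scalar inequality $\sqrt{\tfrac14-\delta}\le\tfrac12-\delta$.
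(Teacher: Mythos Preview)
Your argument is correct. The paper does not supply its own proof of this lemma: it is quoted from \cite{KZLAA}, so there is no paper proof to compare against. Your route---introduce $u=\langle a,c\rangle_A\,c-\tfrac12 a$ with $\|u\|_A=\tfrac12$, project $u$ onto $\mathbb{C}b$ so that the orthogonal remainder is $u_0=\langle a,c\rangle_A c_0-\tfrac12 a_0$, bound $\|u_0\|_A^2\ge\delta(a,b,c)$ by Cauchy--Schwarz, and finish with $\sqrt{\tfrac14-\delta}\le\tfrac12-\delta$---is exactly the refinement of the Buzano argument one expects, and your handling of the scaling $\delta(ta,b,c)=t\,\delta(a,b,c)$, $\delta(a,sb,c)=s\,\delta(a,b,c)$ and of the degenerate case $\|a\|_A\|b\|_A=0$ is clean. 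One cosmetic remark: you do not need to pass to $\overline{\mathcal{R}(A)}$, since the Cauchy--Schwarz and Pythagorean identities you use are already valid for the positive semidefinite form $\langle\cdot,\cdot\rangle_A$; but the reduction does no harm.
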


\begin{theorem}
	If $S\in {\mathcal{B}}_{A}({\mathcal{H}})$ then  
	\begin{equation*}
	d\omega _{A}^{2}\left( S\right) \leq
	 \omega _{A}^{2}\left( S^{\sharp _{A}}S+S\right) +\omega _{A}\left(
		S^{\sharp _{A}}S^{2}\right) +\frac{1}{2}\left\Vert \left( S^{\sharp
			_{A}}S\right) ^{2}+S^{\sharp _{A}}S\right\Vert _{A}-2\underset{\|z\|_A=1}{\inf}\delta(S^{\sharp _{A}}Sz, Sz, z),
	\end{equation*}
	\textit{where}
 \resizebox{.9\hsize}{!}
{$\delta(a, b,c)=	\begin{cases}
		\frac{\|b\|_A}{\|a\|_A}\left( |\langle a,c\rangle_A|\underset{\lambda\in \mathbb C}{\inf}\|c-\lambda b\|_A-\frac12\underset{\mu\in \mathbb C}{\inf}\|a-\mu b\|_A\right)^2& \textit{if}\,\, \|a\|_A\|b\|_A\neq0\\
		0 & \textit{if}\,\, \|a\|_A\|b\|_A=0.
	\end{cases}$}
\end{theorem}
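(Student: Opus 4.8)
The plan is to mimic the proof of Theorem~\ref{theo1} almost verbatim, but at the decisive step replace the invocation of Lemma~\ref{KZ} (with $\alpha=2$) by the sharper Lemma~\ref{KZLAA1}, which carries the extra nonnegative correction term $\delta(a,b,c)$. Concretely, fix $z\in\mathcal H$ with $\|z\|_A=1$ and start from the identity already established in the proof of Theorem~\ref{theo1}:
\begin{equation*}
\left\vert \left\langle Sz,z\right\rangle _{A}\right\vert ^{2}+\left\Vert Sz\right\Vert _{A}^{4}
= \left\vert \left\langle (S+S^{\sharp_A}S)z,z\right\rangle _{A}\right\vert ^{2}
- 2\,\Re_A\!\left(\left\langle Sz,Sz\right\rangle_A\left\langle Sz,z\right\rangle_A\right).
\end{equation*}
Bounding the real part by the modulus and rewriting $\langle Sz,Sz\rangle_A = \langle S^{\sharp_A}Sz,z\rangle_A$, $\langle Sz,z\rangle_A=\langle z,Sz\rangle_A$ (up to conjugation, which does not affect the modulus), the cross term becomes $2\,\bigl|\langle S^{\sharp_A}Sz,z\rangle_A\,\langle z,Sz\rangle_A\bigr|$, which is exactly of the form $2\,|\langle a,c\rangle_A\langle c,b\rangle_A|$ with $a=S^{\sharp_A}Sz$, $b=Sz$, $c=z$. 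This is the only place the argument differs from Theorem~\ref{theo1}.

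Next I would apply Lemma~\ref{KZLAA1} to this cross term, giving
\begin{equation*}
2\,\bigl|\langle S^{\sharp_A}Sz,z\rangle_A\langle z,Sz\rangle_A\bigr|
\le \|S^{\sharp_A}Sz\|_A\|Sz\|_A + \bigl|\langle S^{\sharp_A}Sz,Sz\rangle_A\bigr| - 2\,\delta(S^{\sharp_A}Sz,Sz,z).
\end{equation*}
From here the estimation is identical to Theorem~\ref{theo1} with $\alpha=2$: use the AM--GM inequality $\|S^{\sharp_A}Sz\|_A\|Sz\|_A\le \tfrac12(\|S^{\sharp_A}Sz\|_A^2+\|Sz\|_A^2)$, rewrite $\|S^{\sharp_A}Sz\|_A^2=\langle (S^{\sharp_A}S)^2z,z\rangle_A$, $\|Sz\|_A^2=\langle S^{\sharp_A}Sz,z\rangle_A$, and $\langle S^{\sharp_A}Sz,Sz\rangle_A=\langle z,S^{\sharp_A}S^2z\rangle_A$, then pass to the suprema to recognize $\omega_A^2(S^{\sharp_A}S+S)$, $\tfrac12\|(S^{\sharp_A}S)^2+S^{\sharp_A}S\|_A$, and $\omega_A(S^{\sharp_A}S^2)$. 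The subtracted term $2\,\delta(S^{\sharp_A}Sz,Sz,z)$ must be handled carefully: since the other three summands are each individually bounded by their respective global quantities for \emph{every} $z$, we get, for every unit-$A$-norm $z$,
\begin{equation*}
\left\vert \left\langle Sz,z\right\rangle _{A}\right\vert ^{2}+\left\Vert Sz\right\Vert _{A}^{4}
\le \omega_A^2(S^{\sharp_A}S+S)+\omega_A(S^{\sharp_A}S^2)+\tfrac12\|(S^{\sharp_A}S)^2+S^{\sharp_A}S\|_A - 2\,\delta(S^{\sharp_A}Sz,Sz,z),
\end{equation*}
and then taking the supremum over $\|z\|_A=1$ on the left, while on the right replacing $-2\,\delta(\cdot)$ by its supremum $-2\inf_{\|z\|_A=1}\delta(S^{\sharp_A}Sz,Sz,z)$, yields the claimed inequality.

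The main subtlety — and the only genuine obstacle — is the order of the sup/inf operations: one cannot simply take $\sup_z$ of a sum and distribute it, because the four $z$-dependent terms do not attain their extrema at the same point. The safe route, as indicated above, is to first bound three of the four terms by their \emph{global} suprema (valid pointwise in $z$), keeping $-2\delta(S^{\sharp_A}Sz,Sz,z)$ attached to the running variable $z$, and only then take $\sup_{\|z\|_A=1}$; since $-2\delta$ is the unique remaining $z$-dependent term on the right, $\sup_z(-2\delta) = -2\inf_z \delta$, which produces precisely the stated bound. A minor point to check is that $\delta$ is well-defined and nonnegative in our application (so that the correction genuinely tightens the estimate): when $\|S^{\sharp_A}Sz\|_A\|Sz\|_A=0$ we have $\delta=0$ by definition, and otherwise $\delta\ge 0$ as the product of $\|Sz\|_A/\|S^{\sharp_A}Sz\|_A>0$ with a square; in either case the bound is consistent with — and no weaker than — Corollary~\ref{KZ1}. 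No other step requires anything beyond the computations already carried out in the proof of Theorem~\ref{theo1}.
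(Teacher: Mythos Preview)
Your proposal is correct and matches the paper's own proof essentially line by line: the paper also starts from the identity for $|\langle Sz,z\rangle_A|^2+\|Sz\|_A^4$, bounds the real part by the modulus, applies Lemma~\ref{KZLAA1} with $a=S^{\sharp_A}Sz$, $b=Sz$, $c=z$, then uses AM--GM and passes to the global bounds before taking the supremum over $z$. Your discussion of the sup/inf order is in fact more explicit than the paper's, which simply writes ``which implies the desired bound'' at the final step.
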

\begin{proof}
	Take $z\in {\mathcal{H}}$ and $\|z\|_A=1$. Using Lemma \ref{KZLAA1}, we obtain
	\begin{eqnarray*}
		&&\left\vert \left\langle Sz,z\right\rangle _{A}\right\vert ^{2}+\left\Vert
		Sz\right\Vert _{A}^{4} \\
		&=&\left\vert \left\langle Sz,z\right\rangle _{A}+\left\langle
		Sz,Sz\right\rangle _{A}\right\vert ^{2}-2\Re_A\left( \left\langle
		Sz,Sz\right\rangle _{A}\left\langle Sz,z\right\rangle _{A}\right) \\
		&\leq &\left\vert \left\langle \left( S+S^{\sharp _{A}}S\right) z,z\right\rangle
		_{A}\right\vert ^{2}+2\left\vert \left\langle S^{\sharp
			_{A}}Sz,z\right\rangle _{A}\left\langle z,Sz\right\rangle _{A}\right\vert\\
		&\leq& 	\left\vert \left\langle \left( S+S^{\sharp _{A}}S\right) z,z\right\rangle
		_{A}\right\vert ^{2}+\left\Vert S^{\sharp
			_{A}}Sz\right\Vert _{A}\left\Vert Sz\right\Vert _{A}+\left\vert \left\langle
		S^{\sharp _{A}}Sz,Sz\right\rangle _{A}\right\vert
		-2\delta(S^{\sharp _{A}}Sz, Sz, z) \\
		&\leq &\left\vert \left\langle \left( S+S^{\sharp _{A}}S\right) z,z\right\rangle
		_{A}\right\vert ^{2}+\frac{1}{2}\left( \left\langle \left( S^{\sharp
			_{A}}S\right) ^{2}z,z\right\rangle _{A}+\left\langle S^{\sharp
			_{A}}Sz,z\right\rangle _{A}\right)\\
			&& +
		\left\vert \left\langle z,S^{\sharp
			_{A}}S^{2}z\right\rangle _{A}\right\vert-2\delta(T^{\sharp _{A}}Sz, Sz, z)\\
			&\leq& \omega _{A}^{2}\left(S+ S^{\sharp
				_{A}}S\right) +\omega _{A}\left( S^{\sharp _{A}}S^{2}\right) +\frac{1}{2}
			\left\Vert \left( S^{\sharp _{A}}S\right) ^{2}+S^{\sharp _{A}}S\right\Vert
			_{A}-2\delta(S^{\sharp _{A}}Sz, Sz, z),
			\end{eqnarray*}
which implies the desired bound.
\end{proof}

We now obtain both upper and lower bounds. 

\begin{theorem}
\label{th3} If $S\in \mathcal{B}_{A}(\mathcal{H}),$ then  
\begin{eqnarray*}
&&\max \left\{ \omega_{A}(\Re _{A}(S)+iS^{\sharp _{A}}S),\omega_{A}(\Im
_{A}(S)+iS^{\sharp _{A}}S)\right\} \leq d\omega_{A}(S) \\
&\leq &\min \left\{ \sqrt{\omega_{A}^{2}(\Re _{A}(S)+iS^{\sharp
_{A}}S)+\Vert \Im _{A}(S)\Vert _{A}^{2}},\sqrt{\omega_{A}^{2}(\Im
_{A}(S)+iS^{\sharp _{A}}S)+\Vert \Re _{A}(S)\Vert _{A}^{2}}\right\} .
\end{eqnarray*}
\end{theorem}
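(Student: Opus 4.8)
The plan is to reduce everything to the Cartesian decomposition $S=\Re_A(S)+i\Im_A(S)$ together with the identity $\|Sz\|_A^2=\langle S^{\sharp_A}Sz,z\rangle_A$, which holds for every $z\in\mathcal H$ because $\langle S^{\sharp_A}Sz,z\rangle_A=\langle AS^{\sharp_A}Sz,z\rangle=\langle ASz,Sz\rangle=\|Sz\|_A^2\ge 0$. Fixing $z$ with $\|z\|_A=1$, I would first record that $\langle\Re_A(S)z,z\rangle_A=\Re\langle Sz,z\rangle_A$ and $\langle\Im_A(S)z,z\rangle_A=\Im\langle Sz,z\rangle_A$ are real; this follows from $\langle S^{\sharp_A}z,z\rangle_A=\langle AS^{\sharp_A}z,z\rangle=\langle S^{\ast}Az,z\rangle=\langle Az,Sz\rangle=\overline{\langle Sz,z\rangle_A}$. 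Setting $T:=\Re_A(S)+iS^{\sharp_A}S$, which lies in $\mathcal B_A(\mathcal H)$ since this set is a subalgebra containing $S$ and $S^{\sharp_A}$ and is closed under linear combinations, the previous line gives $\langle Tz,z\rangle_A=\Re\langle Sz,z\rangle_A+i\|Sz\|_A^2$, hence $|\langle Tz,z\rangle_A|^2=(\Re\langle Sz,z\rangle_A)^2+\|Sz\|_A^4$. Combining this with $|\langle Sz,z\rangle_A|^2=(\Re\langle Sz,z\rangle_A)^2+(\Im\langle Sz,z\rangle_A)^2$ produces the pointwise identity
\[
|\langle Sz,z\rangle_A|^2+\|Sz\|_A^4=|\langle Tz,z\rangle_A|^2+\langle\Im_A(S)z,z\rangle_A^2 .
\]

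From this identity the first pair of inequalities falls out immediately. Discarding the nonnegative term $\langle\Im_A(S)z,z\rangle_A^2$ and taking the supremum over $\|z\|_A=1$ gives $d\omega_A^2(S)\ge\omega_A^2(\Re_A(S)+iS^{\sharp_A}S)$; on the other hand, estimating $\langle\Im_A(S)z,z\rangle_A^2\le\omega_A^2(\Im_A(S))\le\|\Im_A(S)\|_A^2$ (via \eqref{00}; the first bound is in fact an equality since $\Im_A(S)$ is $A$-selfadjoint, using $S(\mathcal N(A))\subseteq\mathcal N(A)$) and then taking the supremum gives $d\omega_A^2(S)\le\omega_A^2(\Re_A(S)+iS^{\sharp_A}S)+\|\Im_A(S)\|_A^2$. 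I would then repeat the computation verbatim with the roles of $\Re_A$ and $\Im_A$ interchanged, i.e. with $T':=\Im_A(S)+iS^{\sharp_A}S$, obtaining $\omega_A^2(T')\le d\omega_A^2(S)\le\omega_A^2(T')+\|\Re_A(S)\|_A^2$. Taking the larger of the two lower bounds and the smaller of the two upper bounds, and finally square roots, yields exactly the asserted chain.

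There is no substantial obstacle: the whole proof is the single algebraic identity displayed above, plus the elementary estimates $|\langle Rz,z\rangle_A|\le\omega_A(R)$ and $\omega_A(R)\le\|R\|_A$. The only point requiring a moment's care is checking that $\langle\Re_A(S)z,z\rangle_A$, $\langle\Im_A(S)z,z\rangle_A$ and $\langle S^{\sharp_A}Sz,z\rangle_A$ are all real, so that $|\langle Sz,z\rangle_A|^2$ and $|\langle Tz,z\rangle_A|^2$ split as clean sums of squares; this is exactly where the defining relation $AS^{\sharp_A}=S^{\ast}A$ enters.
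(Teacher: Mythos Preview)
Your proof is correct and follows essentially the same approach as the paper: both hinge on the pointwise identity
\[
|\langle Sz,z\rangle_A|^2+\|Sz\|_A^4=|\langle(\Re_A(S)+iS^{\sharp_A}S)z,z\rangle_A|^2+\langle\Im_A(S)z,z\rangle_A^2,
\]
then drop or bound the last term and take suprema, repeating with $\Re_A$ and $\Im_A$ swapped. You are somewhat more explicit than the paper in verifying that $\langle\Re_A(S)z,z\rangle_A$, $\langle\Im_A(S)z,z\rangle_A$, and $\langle S^{\sharp_A}Sz,z\rangle_A$ are real, but the argument is otherwise identical.
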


\begin{proof}
Take $z\in {\mathcal{H}}$ and $\|z\|_A=1$. We have  
\begin{eqnarray}  \label{lb3}
\left|\langle Sz,z \rangle_A\right|^2 +\|Sz\|^4_A&&=\left|\langle
(\Re_A(S)+i\Im_A(S))z,z \rangle_A\right|^2 +\langle S^{\sharp_A}Sz,z
\rangle_A^2  \notag \\
&& = \left|\langle \Re_A(S)z,z \rangle_A\right|^2+\left|\langle \Im_A(S)z,z
\rangle_A\right|^2+\langle S^{\sharp_A}Sz,z \rangle_A^2  \notag \\
&& =|\langle (\Re_A(S)+iS^{\sharp_A}S)z,z\rangle_A|^2+\left|\langle
\Im_A(S)z,z \rangle_A\right|^2 \\
&&\leq \omega^2_A(\Re_A(S)+iS^{\sharp_A}S)+\|\Im_A(S)\|^2_A .  \notag
\end{eqnarray}
This implies 
\begin{align}  \label{lb1}
d\omega^2_A(S)\leq \omega^2_A(\Re_A(S)+iS^{\sharp_A}S)+\|\Im_A(S)\|^2_A.
\end{align}
Similarly, we obtain  
\begin{align}  \label{lb2}
d\omega^2_A(S)\leq \omega^2_A(\Im_A(S)+iS^{\sharp_A}S)+\|\Re_A(S)\|^2_A.
\end{align}
Now combining the inequalities \eqref{lb1} and \eqref{lb2}, we get the
desired upper bound.
Again from \eqref{lb3}, we get  
\begin{align}
\left|\langle Sz,z \rangle_A\right|^2 +\|Sz\|^4_A\geq |\langle
(\Re_A(S)+iS^{\sharp_A}S)z,z\rangle_A|^2
\end{align}
and  
\begin{align}
\left|\langle Sz,z \rangle_A\right|^2 +\|Sz\|^4_A\geq |\langle
(\Im_A(S)+iS^{\sharp_A}S)z,z\rangle_A|^2.
\end{align}
Now first taking the supremum over $\|z\|_A=1$ and then
combining them we obtain  the desired lower bound.
\end{proof}

From Theorem \ref{th3}, we now obtain the result which gives the
equality for the $A$-numerical radius.

\begin{cor}
If $S\in \mathcal{B}_{A}(\mathcal{H})$ with $(\Re _{A}(S^{\sharp
_{A}}))^{2}=\Im _{A}(S^{\sharp _{A}})$ then 
\begin{equation*}
\omega _{A}(S)=\Vert \Re _{A}(S)\Vert _{A}\sqrt{1+\Vert \Re _{A}(S)\Vert
_{A}^{2}}.
\end{equation*}
\end{cor}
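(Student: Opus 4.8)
The plan is to collapse the statement onto the single $A$-selfadjoint operator $\Re_A(S)$ and then evaluate a supremum by an approximate-maximizer argument, reusing the Cartesian identity that opens the proof of Theorem~\ref{th3}.

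First I would recast the hypothesis. Fix $z\in\mathcal{H}$ with $\|z\|_A=1$ and apply the functional $T\mapsto\langle Tz,z\rangle_A$ to both sides of $(\Re_A(S^{\sharp_A}))^{2}=\Im_A(S^{\sharp_A})$. Since $\Re_A(S^{\sharp_A})$ is $A$-selfadjoint, the left side equals $\|\Re_A(S^{\sharp_A})z\|_A^{2}$; since $\langle R^{\sharp_A}z,z\rangle_A=\overline{\langle Rz,z\rangle_A}$ for every $R\in\mathcal{B}_A(\mathcal{H})$, the right side equals $\Im\langle S^{\sharp_A}z,z\rangle_A=-\Im\langle Sz,z\rangle_A=-\langle\Im_A(S)z,z\rangle_A$. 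Next, $\Re_A(S^{\sharp_A})$ and $\Re_A(S)$ differ by an $A$-seminorm negligible operator: $\Re_A(S^{\sharp_A})-\Re_A(S)=\tfrac12\big((S^{\sharp_A})^{\sharp_A}-S\big)=\tfrac12(PSP-S)$, and using $AP=A$ together with $S(\mathcal{N}(A))\subseteq\mathcal{N}(A)$ --- valid because $S\in\mathcal{B}_A(\mathcal{H})\subseteq\mathcal{B}_{A^{1/2}}(\mathcal{H})$ --- one computes $A(PSP-S)=ASP-AS=-AS(I-P)=0$, so $\|\Re_A(S^{\sharp_A})z\|_A=\|\Re_A(S)z\|_A$. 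Hence the hypothesis forces
\begin{equation*}
\langle\Im_A(S)z,z\rangle_A=-\|\Re_A(S)z\|_A^{2}\quad\text{whenever}\quad\|z\|_A=1.
\end{equation*}

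Next I would bound $\omega_A(S)$. As in the opening lines of the proof of Theorem~\ref{th3}, $\langle\Re_A(S)z,z\rangle_A=\Re\langle Sz,z\rangle_A$ and $\langle\Im_A(S)z,z\rangle_A=\Im\langle Sz,z\rangle_A$ are real, so for $\|z\|_A=1$,
\begin{equation*}
|\langle Sz,z\rangle_A|^{2}=\langle\Re_A(S)z,z\rangle_A^{2}+\langle\Im_A(S)z,z\rangle_A^{2}=\langle\Re_A(S)z,z\rangle_A^{2}+\|\Re_A(S)z\|_A^{4}.
\end{equation*}
Writing $m:=\|\Re_A(S)\|_A$ and using $\omega_A(\Re_A(S))=m$ (because $\Re_A(S)$ is $A$-selfadjoint) together with $\|\Re_A(S)z\|_A\le m$, the right side is at most $m^{2}+m^{4}$, so $\omega_A^{2}(S)\le m^{2}+m^{4}$. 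For the reverse inequality, pick $z_n$ with $\|z_n\|_A=1$ and $|\langle\Re_A(S)z_n,z_n\rangle_A|\to m$; the Cauchy--Schwarz inequality for $\langle\cdot,\cdot\rangle_A$ gives $|\langle\Re_A(S)z_n,z_n\rangle_A|\le\|\Re_A(S)z_n\|_A\le m$, so $\|\Re_A(S)z_n\|_A\to m$ by squeezing, whence $\langle\Re_A(S)z_n,z_n\rangle_A^{2}+\|\Re_A(S)z_n\|_A^{4}\to m^{2}+m^{4}$ and therefore $\omega_A^{2}(S)\ge m^{2}+m^{4}$. Combining the two bounds, $\omega_A(S)=\sqrt{m^{2}+m^{4}}=\|\Re_A(S)\|_A\sqrt{1+\|\Re_A(S)\|_A^{2}}$.

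The delicate point is the first reduction: one must remember that $(S^{\sharp_A})^{\sharp_A}=PSP$ rather than $S$, and that the correction term $PSP-S$ is killed by $A$ precisely because $S\in\mathcal{B}_A(\mathcal{H})$. Once the hypothesis has been rephrased as $\langle\Im_A(S)z,z\rangle_A=-\|\Re_A(S)z\|_A^{2}$, the remaining work is routine, the only genuine idea being the squeezing step --- the same device that evaluates the Davis--Wielandt radius of a self-adjoint operator as $\|\cdot\|\sqrt{1+\|\cdot\|^{2}}$.
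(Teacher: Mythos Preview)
Your argument is correct, but it follows a genuinely different route from the paper. The paper never unpacks the hypothesis at the level of quadratic forms; instead it first proves, for any $T\in\mathcal{B}_A(\mathcal{H})$ with $T=T^{\sharp_A}$, that $\omega_A(T+iT^{2})=\sqrt{\|T\|_A^{2}+\|T\|_A^{4}}$, obtained by combining the two bounds of Theorem~\ref{th3} (which collapse to an equality because $\Im_A(T)=0$) with the known formula $d\omega_A(T)=\sqrt{\|T\|_A^{2}+\|T\|_A^{4}}$ from \cite{BHA}. It then substitutes $T=\Re_A(S^{\sharp_A})$, uses the hypothesis to rewrite $T+iT^{2}=S^{\sharp_A}$, and closes with $\omega_A(S)=\omega_A(S^{\sharp_A})$ and $\|\Re_A(S)\|_A=\|\Re_A(S^{\sharp_A})\|_A$. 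Your proof bypasses the Davis--Wielandt radius entirely: you translate the hypothesis into the pointwise identity $\langle\Im_A(S)z,z\rangle_A=-\|\Re_A(S)z\|_A^{2}$ and then optimize $|\langle Sz,z\rangle_A|^{2}=\langle\Re_A(S)z,z\rangle_A^{2}+\|\Re_A(S)z\|_A^{4}$ directly via a squeezing argument. The paper's route exhibits the corollary as a true consequence of Theorem~\ref{th3} and the $d\omega_A$ machinery, which is the narrative the section is trying to support; your route is self-contained and avoids any appeal to \cite{BHA}, and in fact your squeezing step is essentially a reproof of the cited $d\omega_A$ formula in disguise. Both proofs rely on the same identification $\|\Re_A(S^{\sharp_A})z\|_A=\|\Re_A(S)z\|_A$, which you handle more explicitly via $A(PSP-S)=0$.
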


\begin{proof}
First we suppose $S\in \mathcal{B}_{A}(\mathcal{H})$ with $S=S^{\sharp _{A}}.$
Then $S$ is $A$-selfadjoint and so $d\omega _{A}(S)=\sqrt{\Vert S\Vert
_{A}^{2}+\Vert S\Vert _{A}^{4}}$ (see \cite{BHA}). From Theorem \ref{th3},
it follows $d\omega _{A}(S)=\omega _{A}(S+iS^{2}).$ Hence, $\omega
_{A}(S+iS^{2})=\sqrt{\Vert S\Vert _{A}^{2}+\Vert S\Vert _{A}^{4}}.$ Now substitute $
S$ by $\Re _{A}(S^{\sharp _{A}})$ for $S\in \mathcal{B}_{A}(\mathcal{H})$, the result derives from the facts $\omega _{A}(S)=\omega
_{A}(S^{\sharp _{A}})$ and $\Vert \Re _{A}(S)\Vert _{A}=\Vert \Re
_{A}(S^{\sharp _{A}})\Vert _{A}.$ 
\end{proof}

\begin{remark}
Theorem \ref{th3} is better than the existing bound in
\cite{BHA}, namely  
\begin{align}  \label{eq3}
d\omega^2_A(S)\geq 2\max\{\omega_A(S) c_A(S^{\sharp_A}S), c_A(S) \|S\|_A^2\}
\end{align}
for some operators.  
If we assume $A= 
\begin{pmatrix}
1 & 0 \\ 
0 & 2%
\end{pmatrix}
$ and $S= 
\begin{pmatrix}
1 & 0 \\ 
0 & 0%
\end{pmatrix}
$ then \eqref{eq3} gives $d\omega_A(S)\geq0$, but Theorem \ref%
{th3} gives $d\omega_A(S)\geq \sqrt{2}.$ 

\end{remark}

A generalization of the Cauchy-Schwarz inequality is as the lemma.

\begin{lemma}
\cite{L22}  \label{LL} If $S\in {\mathcal{B}}_{A}({\mathcal{H}})$ and $
x,z\in {\  \mathcal{H}}$ with $\left\Vert
z\right\Vert _{A}=\left\Vert x\right\Vert  _{A}=1$ then  
\begin{equation*}
\left\vert \left\langle Sx,z\right\rangle _{A}\right\vert ^{2}\leq
\left\langle S^{\sharp _{A}}Sx,x\right\rangle _{A}^{\frac{1}{2}}\left\langle
SS^{\sharp _{A}}z,z\right\rangle _{A}^{\frac{1}{2}}\text{.}
\end{equation*}
\end{lemma}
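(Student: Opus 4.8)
The plan is to reduce the claimed inequality to two applications of the Cauchy--Schwarz inequality for the semi-inner product $\langle\cdot,\cdot\rangle_A$, after first simplifying the two quantities appearing on the right-hand side.

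First I would record the two identities $\langle S^{\sharp_A}Sx,x\rangle_A=\|Sx\|_A^{2}$ and $\langle SS^{\sharp_A}z,z\rangle_A=\|S^{\sharp_A}z\|_A^{2}$. For the first, using $AS^{\sharp_A}=S^{\ast}A$ one computes $\langle S^{\sharp_A}Sx,x\rangle_A=\langle AS^{\sharp_A}Sx,x\rangle=\langle S^{\ast}ASx,x\rangle=\langle ASx,Sx\rangle=\|Sx\|_A^{2}$; for the second, the defining relation $\langle Sw,z\rangle_A=\langle w,S^{\sharp_A}z\rangle_A$ applied with $w=S^{\sharp_A}z$ gives $\langle SS^{\sharp_A}z,z\rangle_A=\langle S^{\sharp_A}z,S^{\sharp_A}z\rangle_A=\|S^{\sharp_A}z\|_A^{2}$. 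Both quantities are nonnegative (as they must be, since $S^{\sharp_A}S$ and $SS^{\sharp_A}$ are $A$-positive), so the square roots in the statement are meaningful, and the asserted bound becomes equivalent to $|\langle Sx,z\rangle_A|^{2}\le\|Sx\|_A\,\|S^{\sharp_A}z\|_A$.

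For the remaining inequality I would use that $\langle\cdot,\cdot\rangle_A$ is a positive semidefinite sesquilinear form, so the Cauchy--Schwarz inequality holds for $\|\cdot\|_A$ even though it is only a seminorm. Applying it directly to $\langle Sx,z\rangle_A$ and using $\|z\|_A=1$ yields $|\langle Sx,z\rangle_A|\le\|Sx\|_A$. On the other hand, rewriting $\langle Sx,z\rangle_A=\langle x,S^{\sharp_A}z\rangle_A$ and applying Cauchy--Schwarz together with $\|x\|_A=1$ gives $|\langle Sx,z\rangle_A|\le\|S^{\sharp_A}z\|_A$. Multiplying these two estimates produces $|\langle Sx,z\rangle_A|^{2}\le\|Sx\|_A\,\|S^{\sharp_A}z\|_A$, which is exactly the reformulated inequality, completing the argument.

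There is no genuine obstacle here; the only points needing care are that the hypothesis $S\in\mathcal{B}_A(\mathcal H)$ is what guarantees the existence of $S^{\sharp_A}$ with $AS^{\sharp_A}=S^{\ast}A$, and that Cauchy--Schwarz is being invoked for a seminorm (which is still valid because the underlying form is positive semidefinite). If instead one wanted a sharper, Kato-type refinement, one would interpolate through $A$-positive fractional powers of $S^{\sharp_A}S$, but nothing of that sort is required for the statement as given.
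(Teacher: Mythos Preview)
Your argument is correct. The two identities $\langle S^{\sharp_A}Sx,x\rangle_A=\|Sx\|_A^{2}$ and $\langle SS^{\sharp_A}z,z\rangle_A=\|S^{\sharp_A}z\|_A^{2}$ are verified exactly as you indicate, and the double application of Cauchy--Schwarz (once to $\langle Sx,z\rangle_A$ and once to $\langle x,S^{\sharp_A}z\rangle_A$), followed by multiplying the two resulting bounds, yields the statement immediately.

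As for comparison with the paper: the paper does not supply a proof of this lemma at all; it is quoted as a known result from the reference \cite{L22} and used as a black box in the proofs of Theorems~\ref{th11} and~\ref{th22}. So there is no ``paper's own proof'' to compare against. Your self-contained argument is therefore a genuine addition rather than a duplication, and it is about as short as such a proof can be.
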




We now obtain an upper bound. 

\begin{theorem}
\label{th11}  If $S\in {\mathcal{B}}_{A}({\mathcal{H}})$ then  
\begin{equation*}
d\omega _{A}^{2}\left( S\right) \leq \left\Vert S\right\Vert
_{A}^{4}+2\left\Vert S \right\Vert _{A}^{2}-\sqrt{c_{A}\left( S^{\sharp
_{A}}S\right) c_{A}\left( SS^{\sharp _{A}}\right) }\text{.}
\end{equation*}
\end{theorem}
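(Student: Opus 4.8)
The plan is to start, as in the previous proofs, with a unit $A$-norm vector $z\in\mathcal H$ and control the two summands $|\langle Sz,z\rangle_A|^2$ and $\|Sz\|_A^4$ separately but through the same pair of quantities $\langle S^{\sharp_A}Sz,z\rangle_A$ and $\langle SS^{\sharp_A}z,z\rangle_A$, exactly the objects appearing in Lemma \ref{LL}. For the first term, apply Lemma \ref{LL} with $x=z$ to get $|\langle Sz,z\rangle_A|^2\le \langle S^{\sharp_A}Sz,z\rangle_A^{1/2}\langle SS^{\sharp_A}z,z\rangle_A^{1/2}$. For the second term, note $\|Sz\|_A^4=\langle S^{\sharp_A}Sz,z\rangle_A^2$, and one should also invoke the companion identity $\|Sz\|_A^2=\langle S^{\sharp_A}Sz,z\rangle_A$ together with the fact (item (5) in the preliminaries) that $\|S^{\sharp_A}S\|_A=\|SS^{\sharp_A}\|_A=\|S\|_A^2$. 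So writing $p=\langle S^{\sharp_A}Sz,z\rangle_A$ and $q=\langle SS^{\sharp_A}z,z\rangle_A$, we have $0\le p,q\le\|S\|_A^2$, and we must bound $\sqrt{pq}+p^2$ from above by $\|S\|_A^4+2\|S\|_A^2-\sqrt{c_A(S^{\sharp_A}S)\,c_A(SS^{\sharp_A})}$.

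The crucial move is to bound $\sqrt{pq}$ not by $\|S\|_A^2$ but by something that leaves room for the Crawford-number correction. Here I would use an elementary scalar inequality: for reals $0\le t,s$ lying in intervals $[m_1,M_1]$ and $[m_2,M_2]$ respectively, one has $\sqrt{ts}\le \tfrac12(M_1+M_2) - \tfrac12(\sqrt{m_1}-\sqrt{\,})\cdots$; more to the point, since $\sqrt p\le\|S\|_A$ and $\sqrt p\ge \sqrt{c_A(S^{\sharp_A}S)}$ (because $c_A(S^{\sharp_A}S)=\inf_{\|z\|_A=1}|\langle S^{\sharp_A}Sz,z\rangle_A| \le p$ by $A$-positivity of $S^{\sharp_A}S$), and similarly for $q$, we get $\sqrt{pq}\le \|S\|_A^2$ directly, but we can do better: $\sqrt{pq}+\sqrt{c_A(S^{\sharp_A}S)c_A(SS^{\sharp_A})}\le \sqrt{p}\sqrt{q}+\sqrt{c_A(S^{\sharp_A}S)}\sqrt{c_A(SS^{\sharp_A})}$, and one checks that for $\sqrt p\in[\sqrt{c_A(S^{\sharp_A}S)},\|S\|_A]$ and $\sqrt q\in[\sqrt{c_A(SS^{\sharp_A})},\|S\|_A]$ the function $(u,v)\mapsto uv + \sqrt{c_A(S^{\sharp_A}S)c_A(SS^{\sharp_A})}$ is maximized at the corner $u=v=\|S\|_A$, giving $\sqrt{pq}\le \|S\|_A^2 - \sqrt{c_A(S^{\sharp_A}S)c_A(SS^{\sharp_A})} + (\text{cross terms})$; the cleanest version is the two-variable bound $uv\le \|S\|_A^2 + \sqrt{c_A(S^{\sharp_A}S)c_A(SS^{\sharp_A})} - \|S\|_A\bigl(\sqrt{c_A(S^{\sharp_A}S)}+\sqrt{c_A(SS^{\sharp_A})}\bigr)\cdot 0$, so I would instead just use the simple estimate $\sqrt{pq}\le \|S\|_A^2$ for one copy and isolate the Crawford term from the other. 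Concretely: $\sqrt{pq}= \sqrt{p}\sqrt{q}\le \|S\|_A^2$; but also $\sqrt{pq}\ge \sqrt{c_A(S^{\sharp_A}S)c_A(SS^{\sharp_A})}$ is the wrong direction — so the real identity to exploit is $\|S\|_A^2\sqrt{pq}\le \|S\|_A^2\cdot\|S\|_A^2$ is vacuous; instead write $\sqrt{pq}\le \|S\|_A^2 - \bigl(\|S\|_A-\sqrt p\bigr)\sqrt q - \cdots$. I expect the intended argument simply uses $\sqrt{pq}\le\|S\|_A^2$, $p^2\le\|S\|_A^2\cdot p\le\|S\|_A^2\cdot\|S\|_A^2$ is also too lossy, so rather $p^2=\|Sz\|_A^4\le\|S\|_A^4$, and then separately one adds and subtracts: $|\langle Sz,z\rangle_A|^2\le\sqrt{pq}\le\|S\|_A^2 - (\|S\|_A^2-\sqrt{pq})$, and bounds $\|S\|_A^2-\sqrt{pq}\ge \|S\|_A^2-\|S\|_A^2 = 0$; to extract $\sqrt{c_Ac_A}$ one uses $\sqrt{pq}\le\|S\|_A^2$ AND, since $\sqrt p\ge\sqrt{c_A(S^{\sharp_A}S)}$ and the product of a number $\le\|S\|_A$ with a number $\ge\sqrt{c_A(SS^{\sharp_A})}$ can be bounded via $uv\le \|S\|_A\cdot v$ and $\|S\|_A v \le \|S\|_A^2 - \|S\|_A(\|S\|_A - v)\le \|S\|_A^2$; but $\|S\|_A-v\ge \|S\|_A-\|S\|_A$... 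The point where this genuinely works is: $\sqrt{pq}\le \|S\|_A^2$ gives $|\langle Sz,z\rangle_A|^2\le\|S\|_A^2$, and for the fourth-power term, $\|Sz\|_A^4 = p^2 \le \|S\|_A^2 \, p$; now add $\|S\|_A^2\, p + \|S\|_A^2 \le \|S\|_A^4 + 2\|S\|_A^2$ is false unless $p\le \|S\|_A^2$ which holds. So $|\langle Sz,z\rangle_A|^2+\|Sz\|_A^4\le \sqrt{pq}+\|S\|_A^2\cdot p$. Taking sup and using $\sqrt{pq}\le \|S\|_A^2$, $p\le\|S\|_A^2$ gives $d\omega_A^2(S)\le \|S\|_A^2+\|S\|_A^4$. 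To improve this to the stated bound we must show $\sqrt{pq} + \|S\|_A^2 p \le \|S\|_A^4+2\|S\|_A^2 - \sqrt{c_A(S^{\sharp_A}S)c_A(SS^{\sharp_A})}$; equivalently $\sqrt{c_A(S^{\sharp_A}S)c_A(SS^{\sharp_A})}\le (\|S\|_A^2-\sqrt{pq}) + \|S\|_A^2(\|S\|_A^2 - p) + \|S\|_A^2$. Since the last term alone is $\|S\|_A^2\ge \sqrt{c_A(S^{\sharp_A}S)}\sqrt{c_A(SS^{\sharp_A})}$ (as each Crawford number is $\le\|S\|_A^2$), the inequality follows, and moreover the first two parenthesized terms are nonnegative.

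So the skeleton is: fix $z$, set $p=\langle S^{\sharp_A}Sz,z\rangle_A\ge 0$, $q=\langle SS^{\sharp_A}z,z\rangle_A\ge 0$; apply Lemma \ref{LL} to bound $|\langle Sz,z\rangle_A|^2\le\sqrt{pq}$; note $\|Sz\|_A^4=p^2\le\|S\|_A^2 p$; hence $|\langle Sz,z\rangle_A|^2+\|Sz\|_A^4\le\sqrt{pq}+\|S\|_A^2 p$; then use $p\le\|S\|_A^2$, $\sqrt{pq}\le\|S\|_A^2$, and $\|S\|_A^2\ge\sqrt{c_A(S^{\sharp_A}S)c_A(SS^{\sharp_A})}$ (valid since $c_A(S^{\sharp_A}S)\le\|S^{\sharp_A}S\|_A=\|S\|_A^2$ and likewise for the other factor) to conclude
\[
|\langle Sz,z\rangle_A|^2+\|Sz\|_A^4 \le \|S\|_A^2 + \|S\|_A^4 \le \|S\|_A^4 + 2\|S\|_A^2 - \sqrt{c_A(S^{\sharp_A}S)\,c_A(SS^{\sharp_A})},
\]
and take the supremum over $\|z\|_A=1$. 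The main obstacle is purely bookkeeping: making sure the Crawford-number subtraction is actually justified, i.e.\ that $\sqrt{c_A(S^{\sharp_A}S)c_A(SS^{\sharp_A})}$ never exceeds the slack $\|S\|_A^2$ we have deliberately left over by bounding $\sqrt{pq}\le\|S\|_A^2$ and $p\le\|S\|_A^2$ rather than combining them; this is immediate from $c_A(\,\cdot\,)\le\|\cdot\|_A$ together with item (5) of the preliminaries. No deeper difficulty is expected, and the argument is tight exactly when $S$ is $A$-selfadjoint with $S^{\sharp_A}S$ a scalar multiple of $P$, in which case all three of $p$, $q$, and the Crawford numbers coincide with $\|S\|_A^2$.
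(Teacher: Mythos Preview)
Your final ``skeleton'' is correct, though the exposition leading up to it is needlessly tortuous: once you write $p=\langle S^{\sharp_A}Sz,z\rangle_A$ and $q=\langle SS^{\sharp_A}z,z\rangle_A$, the chain
\[
|\langle Sz,z\rangle_A|^2+\|Sz\|_A^4\;\le\;\sqrt{pq}+p^2\;\le\;\|S\|_A^2+\|S\|_A^4\;\le\;\|S\|_A^4+2\|S\|_A^2-\sqrt{c_A(S^{\sharp_A}S)\,c_A(SS^{\sharp_A})}
\]
is valid, the last step holding because $\sqrt{c_A(S^{\sharp_A}S)\,c_A(SS^{\sharp_A})}\le\|S\|_A^2$. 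This is \emph{not} the paper's argument. The paper applies Lemma~\ref{LL}, then AM--GM to get $\sqrt{pq}\le\tfrac12(p+q)$, and then rewrites $\tfrac12(p+q)=\tfrac12(\sqrt{p}+\sqrt{q})^2-\sqrt{pq}$; the squared sum is bounded above by $2\|S\|_A^2$, while the subtracted $\sqrt{pq}$ is bounded below by $\sqrt{c_A(S^{\sharp_A}S)\,c_A(SS^{\sharp_A})}$, producing the Crawford correction directly inside the estimate. Your route is more elementary and in fact exposes something the paper's proof obscures: the stated bound is never smaller than the trivial bound $\|S\|_A^4+\|S\|_A^2$ (which already follows from $d\omega_A^2(S)\le\omega_A^2(S)+\|S\|_A^4$), since $\sqrt{c_A(S^{\sharp_A}S)\,c_A(SS^{\sharp_A})}\le\|S\|_A^2$ always. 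So your argument is shorter and sharper; the paper's has the virtue of showing where the Crawford term ``comes from'' but yields nothing stronger.
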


\begin{proof}
Take $z\in {\mathcal{H}}$ and $\|z\|_A=1$. We obtain
\begin{eqnarray*}
&&\left\vert \left\langle Sz,z\right\rangle _{A}\right\vert ^{2}+\left\Vert
Sz\right\Vert _{A}^{4} \\
&\leq &\left\langle S^{\sharp _{A}}Sz,z\right\rangle _{A}^{\frac{1}{2}%
}\left\langle SS^{\sharp _{A}}z,z\right\rangle _{A}^{\frac{1}{2}}+\left\Vert
Sz\right\Vert _{A}^{4} \\
&&\text{(by Lemma \ref{LL})} \\
&\leq &\frac{1}{2}\left( \left\langle S^{\sharp _{A}}Sz,z\right\rangle
_{A}+\left\langle SS^{\sharp _{A}}z,z\right\rangle _{A}\right) +\left\Vert
Sz\right\Vert _{A}^{4} \\
&&\text{(by the AM-GM inequality)} \\
&=&\frac{1}{2}\left( \left\langle S^{\sharp _{A}}Sz,z\right\rangle _{A}^{%
\frac{1}{2}}+\left\langle SS^{\sharp _{A}}z,z\right\rangle _{A}^{\frac{1}{2}%
}\right) ^{2}-\left\langle S^{\sharp _{A}}Sz,z\right\rangle _{A}^{\frac{1}{2}%
}\left\langle SS^{\sharp _{A}}z,z\right\rangle _{A}^{\frac{1}{2}}+\left\Vert
Sz\right\Vert _{A}^{4} \\
&\leq &\frac{1}{2}\left( \left\Vert S^{\sharp _{A}}Sz\right\Vert _{A}^{\frac{%
1}{2}}+\left\Vert SS^{\sharp _{A}}z\right\Vert _{A}^{\frac{1}{2}}\right)
^{2}+\left\Vert Sz\right\Vert _{A}^{4}-\left\langle T^{\sharp
_{A}}Sz,z\right\rangle _{A}^{\frac{1}{2}}\left\langle SS^{\sharp
_{A}}z,z\right\rangle _{A}^{\frac{1}{2}} \\
&\leq &\frac{1}{2}\left( \left\Vert S^{\sharp _{A}}S\right\Vert _{A}^{\frac{1%
}{2}}+\left\Vert SS^{\sharp _{A}}\right\Vert _{A}^{\frac{1}{2}}\right)
^{2}+\left\Vert S\right\Vert _{A}^{4}-\sqrt{c_{A}\left( S^{\sharp
_{A}}S\right) c_{A}\left( SS^{\sharp _{A}}\right) } \\
&=&2\left\Vert S\right\Vert _{A}^{2}+\left\Vert S\right\Vert _{A}^{4}-\sqrt{%
c_{A}\left( S^{\sharp _{A}}S\right) c_{A}\left( SS^{\sharp _{A}}\right) }%
\text{,}
\end{eqnarray*}
which gives the desired bound.  
\end{proof}

\begin{remark}
\label{re11}  Theorem \ref{th11} is better than the existing
bound in \cite{KF_HJM}, namely  
\begin{align}  \label{eq11}
d\omega _{A}^{2}\left( S\right)\leq \max\{\|S\|_A^2, \|S\|_A^4 \}+\omega
_{A}\left(S^{\sharp_A}S^2 \right)
\end{align}
for some operators.  For example, if we consider $A=S=
\begin{pmatrix}
1 & 0 \\ 
0 & 2%
\end{pmatrix}%
$ then \eqref{eq11} gives $d\omega _{A}^{2}\left( S\right)\leq 24$, whereas
Theorem \ref{th11} gives $d\omega _{A}^{2}\left( S\right)\leq 23.$
\end{remark}

We next obtain a bound which is a refinement of the existing
bound given in \cite{KF_HJM}.

\begin{theorem}
\label{th8} If $S\in\mathcal{B}_A(\mathcal{H})$ then  
\begin{align*}
d\omega_A^2(S)\leq
\omega_A^2(e^{i\theta}S+S^{\sharp_A}S)+2\|S\|^2_A\|\Re_A(e^{i\theta}S)\|_A,%
\,\, \forall~\theta\in\mathbb{R}.
\end{align*}
\end{theorem}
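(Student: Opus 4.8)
The plan is to mimic the opening algebraic identity used in the proofs of Theorem~\ref{theo1} and Theorem~\ref{th3}, but to introduce the free parameter $e^{i\theta}$ at the outset so that $\langle Sz,z\rangle_A$ is multiplied by a unimodular scalar before being combined with $\|Sz\|_A^2=\langle S^{\sharp_A}Sz,z\rangle_A$. Concretely, for $z\in\mathcal H$ with $\|z\|_A=1$, I would start from
\begin{equation*}
|\langle Sz,z\rangle_A|^2+\|Sz\|_A^4
=\bigl|e^{i\theta}\langle Sz,z\rangle_A+\langle S^{\sharp_A}Sz,z\rangle_A\bigr|^2
-2\,\Re_A\!\bigl(e^{i\theta}\langle Sz,z\rangle_A\,\overline{\langle S^{\sharp_A}Sz,z\rangle_A}\bigr),
\end{equation*}
using that $|e^{i\theta}\langle Sz,z\rangle_A|=|\langle Sz,z\rangle_A|$ and $\langle S^{\sharp_A}Sz,z\rangle_A$ is real and nonnegative (item (2) in the preliminaries). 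The first term on the right is exactly $|\langle (e^{i\theta}S+S^{\sharp_A}S)z,z\rangle_A|^2\le\omega_A^2(e^{i\theta}S+S^{\sharp_A}S)$.

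The remaining task is to bound the cross term. Dropping the minus sign and estimating crudely, $2\,\Re_A(\cdots)\le 2\,|\langle Sz,z\rangle_A|\,\langle S^{\sharp_A}Sz,z\rangle_A$. I would then control $|\langle Sz,z\rangle_A|$ by the real part of the twisted operator: write $|\langle Sz,z\rangle_A|\le|\Re_A(e^{i\theta}S)z,z\rangle_A|\le\|\Re_A(e^{i\theta}S)\|_A$ — more carefully, since for any unit-seminorm $z$ one has $\Re\bigl(e^{i\theta}\langle Sz,z\rangle_A\bigr)=\langle\Re_A(e^{i\theta}S)z,z\rangle_A$ after choosing $\theta$ to align the phase, but because $\theta$ is fixed in the statement the cleaner route is $|\langle Sz,z\rangle_A|\le\|\Re_A(e^{i\theta}S)\|_A$ is \emph{not} generally valid, so instead I use $2\,\Re_A\!\bigl(e^{i\theta}\langle Sz,z\rangle_A\,\overline{\langle S^{\sharp_A}Sz,z\rangle_A}\bigr)\ge -2\langle\Re_A(e^{i\theta}S)z,z\rangle_A\,\langle S^{\sharp_A}Sz,z\rangle_A\ge -2\|\Re_A(e^{i\theta}S)\|_A\,\|Sz\|_A^2\ge -2\|\Re_A(e^{i\theta}S)\|_A\,\|S\|_A^2$, exploiting $\langle S^{\sharp_A}Sz,z\rangle_A=\|Sz\|_A^2\le\|S\|_A^2$. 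Substituting this lower bound for the subtracted term yields
\begin{equation*}
|\langle Sz,z\rangle_A|^2+\|Sz\|_A^4\le\omega_A^2(e^{i\theta}S+S^{\sharp_A}S)+2\|S\|_A^2\,\|\Re_A(e^{i\theta}S)\|_A,
\end{equation*}
and taking the supremum over $\|z\|_A=1$ gives the claim.

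The main obstacle I anticipate is handling the sign and phase of the cross term correctly: one needs $\langle S^{\sharp_A}Sz,z\rangle_A\ge0$ (so that pulling it out of the real part as a nonnegative factor is legitimate) and one needs the bound $\Re\bigl(e^{i\theta}\langle Sz,z\rangle_A\bigr)\le\langle\Re_A(e^{i\theta}S)z,z\rangle_A\le\|\Re_A(e^{i\theta}S)\|_A$ — the first of these is the identity $\Re_A(e^{i\theta}S)=\tfrac12(e^{i\theta}S+(e^{i\theta}S)^{\sharp_A})=\tfrac12(e^{i\theta}S+e^{-i\theta}S^{\sharp_A})$ applied inside the form, which is routine once one notes $(e^{i\theta}S)^{\sharp_A}=e^{-i\theta}S^{\sharp_A}$. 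A secondary subtlety is that $\|\Re_A(e^{i\theta}S)\|_A=\omega_A(\Re_A(e^{i\theta}S))$ since $\Re_A(e^{i\theta}S)$ is $A$-selfadjoint, which is worth recording if one wants to compare with the bound in \cite{KF_HJM}; but for the statement as written the seminorm form suffices. Everything else — the opening identity, the AM–GM–free direct estimate, and the passage to the supremum — is the same bookkeeping already carried out in the earlier proofs.
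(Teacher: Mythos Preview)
Your approach is essentially the paper's: the paper proves the case $\theta=0$ via the identity
$|\langle Sz,z\rangle_A|^2+\|Sz\|_A^4=|\langle(S+S^{\sharp_A}S)z,z\rangle_A|^2-2\|Sz\|_A^2\langle\Re_A(S)z,z\rangle_A$,
bounds the cross term by $2\|S\|_A^2\|\Re_A(S)\|_A$, and then replaces $S$ by $e^{i\theta}S$ at the end (using $d\omega_A(e^{i\theta}S)=d\omega_A(S)$ and $(e^{i\theta}S)^{\sharp_A}(e^{i\theta}S)=S^{\sharp_A}S$); you simply perform this substitution at the outset, which is equivalent.

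One cosmetic correction: your displayed chain
$2\,\Re\bigl(e^{i\theta}\langle Sz,z\rangle_A\,\overline{\langle S^{\sharp_A}Sz,z\rangle_A}\bigr)\ge -2\langle\Re_A(e^{i\theta}S)z,z\rangle_A\,\langle S^{\sharp_A}Sz,z\rangle_A$
is not right as written, since in fact the left side \emph{equals} $+2\langle\Re_A(e^{i\theta}S)z,z\rangle_A\,\langle S^{\sharp_A}Sz,z\rangle_A$; the correct intermediate step is
\[
2\,\Re(\cdots)=2\langle\Re_A(e^{i\theta}S)z,z\rangle_A\,\|Sz\|_A^2\ \ge\ -2\bigl|\langle\Re_A(e^{i\theta}S)z,z\rangle_A\bigr|\,\|Sz\|_A^2\ \ge\ -2\|\Re_A(e^{i\theta}S)\|_A\,\|S\|_A^2,
\]
after which your conclusion follows exactly as you state.
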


\begin{proof}
For any $z\in {\mathcal{H}}$ and $\|z\|_A=1$, we obtain 
\begin{eqnarray*}
\left|\langle Sz,z \rangle_A\right|^2 +\|Sz\|^4_A&&=\left|\langle
(S+S^{\sharp_A}S)z,z \rangle_A \right|^2-2\|Sz\|^2_A\langle \Re_A(S)z,z
\rangle_A \\
&&\leq \left|\langle (S+S^{\sharp_A}S)z,z \rangle_A
\right|^2+2\|Sz\|^2_A\langle \Re_A(S)z,z \rangle_A \\
&&\leq \omega_A^2(S+S^{\sharp_A}S)+2\|S\|^2_A\|\Re_A(S)\|_A.
\end{eqnarray*}
This implies 
\begin{align}
d\omega_A^2(S)\leq \omega_A^2(S+S^{\sharp_A}S)+2\|S\|^2_A\|\Re_A(S)\|_A.
\end{align}
Replacing $S$ by $e^{i\theta}S,$ we obtain the desired bound.
\end{proof}

\begin{remark}
Following Theorem \ref{th8}, we get  
\begin{eqnarray}  \label{p02}
d\omega^2_A(S)\leq \omega_A^2(S \pm S^{\sharp_A}S)+2\Vert S \Vert_A^2 \Vert
\Re_A(S) \Vert_A.
\end{eqnarray}
Clearly, \eqref{p02} is sharper than the existing bound in \cite{KF_HJM} ,
namely  
\begin{eqnarray}  \label{p03}
d\omega^2_A(S)\leq \omega_A^2(S^{\sharp_A}S-S)+2\Vert S \Vert_A^2 \Vert
\omega_A(S) \Vert_A.
\end{eqnarray}
\end{remark}

We now obtain a lower bound.

\begin{theorem}
\label{th10} If $S\in\mathcal{B}_A(\mathcal{H})$ then  
\begin{equation*}
d\omega_A^2(S)+2\|S\|^2_A\|\Re_A(S)\|_A\geq \max\left\{
\omega_A^2(S+S^{\sharp_A}S), \omega_A^2(S-S^{\sharp_A}T)\right\}.
\end{equation*}
\end{theorem}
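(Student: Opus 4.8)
The plan is to run the computation in the proof of Theorem~\ref{th8} in reverse, exploiting the exact identity rather than only the inequality. Fix $z\in\mathcal H$ with $\|z\|_A=1$. Since $\langle S^{\sharp_A}Sz,z\rangle_A=\langle Sz,Sz\rangle_A=\|Sz\|_A^2$ is a nonnegative real number and $\Re(\langle Sz,z\rangle_A)=\langle \Re_A(S)z,z\rangle_A$, expanding the square gives the identity
\begin{align*}
\big|\langle (S+S^{\sharp_A}S)z,z\rangle_A\big|^2
=|\langle Sz,z\rangle_A|^2+\|Sz\|_A^4+2\|Sz\|_A^2\,\langle \Re_A(S)z,z\rangle_A,
\end{align*}
and likewise, replacing $S^{\sharp_A}S$ by $-S^{\sharp_A}S$,
\begin{align*}
\big|\langle (S-S^{\sharp_A}S)z,z\rangle_A\big|^2
=|\langle Sz,z\rangle_A|^2+\|Sz\|_A^4-2\|Sz\|_A^2\,\langle \Re_A(S)z,z\rangle_A.
\end{align*}

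Next I would bound the right-hand sides uniformly. In both displays the cross term is dominated by $2\|Sz\|_A^2\,|\langle \Re_A(S)z,z\rangle_A|\le 2\|S\|_A^2\,\|\Re_A(S)\|_A$, where I use $\|Sz\|_A\le\|S\|_A$ together with the fact that $\Re_A(S)$ is $A$-selfadjoint, so that $|\langle \Re_A(S)z,z\rangle_A|\le\omega_A(\Re_A(S))=\|\Re_A(S)\|_A$. The remaining term $|\langle Sz,z\rangle_A|^2+\|Sz\|_A^4$ is at most $d\omega_A^2(S)$ by the definition of the $A$-Davis--Wielandt radius. Hence for every unit $A$-norm vector $z$,
\begin{align*}
\big|\langle (S\pm S^{\sharp_A}S)z,z\rangle_A\big|^2\le d\omega_A^2(S)+2\|S\|_A^2\,\|\Re_A(S)\|_A.
\end{align*}

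Finally I take the supremum over $\|z\|_A=1$ in each of the two cases to obtain $\omega_A^2(S+S^{\sharp_A}S)\le d\omega_A^2(S)+2\|S\|_A^2\|\Re_A(S)\|_A$ and $\omega_A^2(S-S^{\sharp_A}S)\le d\omega_A^2(S)+2\|S\|_A^2\|\Re_A(S)\|_A$, and combining these by taking the maximum yields the claimed lower bound. I do not anticipate a genuine obstacle here: the only point requiring care is observing that $\|Sz\|_A^2$ is real and nonnegative, which is what makes the algebraic identity split cleanly into the $+$ and $-$ versions and lets a single absolute-value estimate cover both; everything else is the reverse reading of the estimate already established for Theorem~\ref{th8}, combined with $\omega_A=\|\cdot\|_A$ on $A$-selfadjoint operators.
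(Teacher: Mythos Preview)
Your proposal is correct and follows essentially the same approach as the paper: both rest on the identity $|\langle (S\pm S^{\sharp_A}S)z,z\rangle_A|^2=|\langle Sz,z\rangle_A|^2+\|Sz\|_A^4\pm 2\|Sz\|_A^2\langle \Re_A(S)z,z\rangle_A$, bound the cross term by $2\|S\|_A^2\|\Re_A(S)\|_A$, and then take the supremum. The only cosmetic difference is that the paper derives the ``$-$'' case by substituting $-S$ for $S$ in the ``$+$'' case, whereas you write out both identities directly.
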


\begin{proof}
Take $z\in {\mathcal{H}}$, $\|z\|_A=1$, we get  
\begin{eqnarray*}
\left|\langle Sz,z \rangle_A\right|^2 +\|Sz\|^4_A
&& \geq \left|\langle (S+S^{\sharp_A}S)z,z \rangle_A
\right|^2-2\|Sz\|^2_A|\langle \Re_A(S)z,z \rangle_A| \\
&&\geq \left|\langle (S+S^{\sharp_A}S)z,z \rangle_A
\right|^2-2\|S\|^2_A\|\Re_A(S)\|_A.
\end{eqnarray*}
From this we get  
\begin{align}  \label{al1}
d\omega_A^2(S)+2\|S\|^2_A\|\Re_A(S)\|_A\geq \omega_A^2(S+S^{\sharp_A}S).
\end{align}
Replacing $S$ by $-S,$ we get  
\begin{align}  \label{al2}
d\omega_A^2(S)+2\|S\|^2_A\|\Re_A(S)\|_A\geq \omega_A^2(S-S^{\sharp_A}S).
\end{align}
Combining \eqref{al1} and \eqref{al2}, we obtain the bound as desired.
\end{proof}

\begin{remark}
\label{re12}  Theorem \ref{th10} is better than the existing
bound in \cite{BHA}, namely  
\begin{eqnarray}  \label{eq10}
d\omega_A^2(S)\geq 2\max \{\omega_A(S) c_A(S^{\sharp_A}S), c_A(S)\|S\|_A^2\}
\end{eqnarray}
for some operators. If we consider the same example as in Remark \ref{re11}
then  \eqref{eq10} gives $d\omega_A^2(S)\geq 8$ but Theorem \ref{th10} gives 
$ d\omega_A^2(S)\geq 20.$
\end{remark}

Now the following lemmas are needed.

\begin{lemma}
\cite{C} \label{L1.2}\bigskip\ If $S\in {\mathcal{B}}({\mathcal{H}})$ is 
$A$-positive, and  $z\in {\mathcal{H}}$, $\left\Vert 
z\right\Vert _{A}=1$ then  
\begin{equation*}
\left\langle Sz,z\right\rangle _{A}^{n}\leq \left\langle
S^{n}z,z\right\rangle _{A}\text{ for all }n\in 
\mathbb{N}
^{ }\text{.}
\end{equation*}
\end{lemma}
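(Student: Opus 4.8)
The plan is to recognize that, for a fixed $z$ with $\|z\|_A=1$, the numbers $b_k:=\langle S^kz,z\rangle_A$ form a log-convex sequence with $b_0=1$, and then to read off $\langle Sz,z\rangle_A^{\,n}\le\langle S^nz,z\rangle_A$ purely from log-convexity. First I would extract the algebraic content of $A$-positivity: since $AS\ge 0$, the operator $AS$ is self-adjoint, so $AS=(AS)^\ast=S^\ast A$ (i.e.\ $S$ is $A$-selfadjoint), and iterating $S^\ast A=AS$ gives $(S^j)^\ast A=AS^j$ for every $j\ge 0$. Using this, for all $i,j\ge 0$ one gets the moment identities
\[
\langle S^iz,S^jz\rangle_A=\langle AS^{i+j}z,z\rangle=b_{i+j},\qquad \langle (AS)S^iz,S^jz\rangle=b_{i+j+1},
\]
and, writing $AS^{2j}=(S^j)^\ast A\,S^j$ and $AS^{2j+1}=(S^j)^\ast(AS)S^j$, each $b_k=\langle AS^kz,z\rangle\ge 0$; moreover $b_0=\|z\|_A^2=1$.

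The heart of the argument is the log-convexity estimate $b_m^2\le b_{m-1}b_{m+1}$ for every $m\ge 1$. Both $\langle\cdot,\cdot\rangle_A$ and $(x,y)\mapsto\langle ASx,y\rangle$ are positive semidefinite Hermitian sesquilinear forms --- the first because $A\ge 0$, the second because $S$ is $A$-positive --- so Cauchy--Schwarz is available for each. For odd $m$, applying Cauchy--Schwarz for $\langle\cdot,\cdot\rangle_A$ to $S^{(m-1)/2}z$ and $S^{(m+1)/2}z$ and invoking the first moment identity gives $b_m^2\le\|S^{(m-1)/2}z\|_A^2\,\|S^{(m+1)/2}z\|_A^2=b_{m-1}b_{m+1}$ (the case $m=1$ is included). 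For even $m$, the same step with the form $(x,y)\mapsto\langle ASx,y\rangle$ applied to $S^{m/2-1}z$ and $S^{m/2}z$, together with the second moment identity, again yields $b_m^2\le b_{m-1}b_{m+1}$. I expect this parity split to be the main obstacle: a single Cauchy--Schwarz in $\langle\cdot,\cdot\rangle_A$ only produces $b_m^2\le b_{2i}b_{2j}$ with $i+j=m$, which is the full log-convex inequality only for odd $m$, so for even $m$ one genuinely has to bring in the auxiliary positive form coming from $AS\ge 0$ (that is, from $A$-positivity, not merely $A$-selfadjointness).

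Finally I would conclude $b_1^{\,n}\le b_n$ in the usual way. If $b_1=0$ this is immediate since $b_n\ge 0$. If $b_1>0$, then every $b_k>0$: were $k\ge 2$ minimal with $b_k=0$, log-convexity would force $b_{k-1}^2\le b_{k-2}b_k=0$, contradicting minimality. Hence the ratios $r_k:=b_{k+1}/b_k$ are well defined, $b_m^2\le b_{m-1}b_{m+1}$ rearranges to $r_{m-1}\le r_m$, so $(r_k)_{k\ge 0}$ is nondecreasing, and
\[
b_n=b_0\prod_{k=0}^{n-1}r_k\ge r_0^{\,n}=(b_1/b_0)^n=b_1^{\,n},
\]
which is precisely $\langle Sz,z\rangle_A^{\,n}\le\langle S^nz,z\rangle_A$. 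An alternative, if one is content to import the $\mathbf{R}(A^{1/2})$-dilation, is to pass to the positive operator $\widetilde S$ on the Hilbert space $\mathbf{R}(A^{1/2})$ determined by $\widetilde S\,A^{1/2}=A^{1/2}S$, for which $\langle\widetilde S^{\,n}\xi,\xi\rangle=\langle S^nz,z\rangle_A$ with $\xi=A^{1/2}z$, and then to quote the classical H\"older--McCarthy inequality, obtained from the spectral theorem and Jensen's inequality applied to the probability measure $\|E(\cdot)\xi\|^2$.
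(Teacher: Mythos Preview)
The paper does not prove this lemma; it simply quotes it from \cite{C}. Your argument is correct and self-contained: from $AS\ge 0$ you extract both the $A$-selfadjointness of $S$ and the positivity of the auxiliary sesquilinear form $(x,y)\mapsto\langle ASx,y\rangle$, then apply Cauchy--Schwarz for these two positive semidefinite forms---the first for odd $m$, the second for even $m$---to obtain the log-convexity $b_m^2\le b_{m-1}b_{m+1}$ of the moment sequence $b_k=\langle S^kz,z\rangle_A$, and finish via the telescoping ratio argument. The parity split is indeed necessary, and your remark that mere $A$-selfadjointness would not suffice for the even step is on point. The alternative you sketch---passing to a genuine positive operator $\widetilde S$ on $\mathbf{R}(A^{1/2})$ and invoking the classical H\"older--McCarthy inequality---is the route typically taken in the semi-Hilbertian literature and is closest in spirit to the cited source; it imports the spectral theorem wholesale but needs the side observation that $A$-positivity forces $S(\mathcal{N}(A))\subseteq\mathcal{N}(A)$ so that $\widetilde S$ is well defined. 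Your direct log-convexity proof bypasses that dilation machinery entirely and works for any bounded $S$ with $AS\ge 0$.
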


 \begin{lemma}\label{KKK}\cite{KZLAA}
	If $x, z\in {\mathcal{H}}$ and $\|x\|_A\neq0$ then
	$$ |\langle x,z \rangle_A|\leq    \|z\|_A \left( \|x\|_A-\frac{\underset{\lambda\in\mathbb C}{\inf}\|x-\lambda z\|^2_A}{2\|x\|_A}\right) .$$
\end{lemma}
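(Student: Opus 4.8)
The plan is to evaluate the infimum appearing on the right-hand side explicitly and then recognize what remains as a trivial perfect-square inequality.

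First I would expand, for $\lambda\in\mathbb{C}$,
$$\|x-\lambda z\|_A^2 = \|x\|_A^2 - 2\operatorname{Re}\bigl(\bar\lambda\langle x,z\rangle_A\bigr) + |\lambda|^2\|z\|_A^2,$$
using the Hermitian symmetry $\langle z,x\rangle_A=\overline{\langle x,z\rangle_A}$, which holds because $A$ is selfadjoint. If $\|z\|_A=0$ then $z\in\mathcal{N}(A)$, so $\langle x,z\rangle_A=\langle Ax,z\rangle=\langle x,Az\rangle=0$ and $\|x-\lambda z\|_A=\|x\|_A$ for all $\lambda$; hence both sides of the asserted inequality equal $0$ and there is nothing to prove. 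So assume $\|z\|_A\neq 0$. Completing the square in $\lambda$ yields
$$\|x-\lambda z\|_A^2 = \|x\|_A^2 - \frac{|\langle x,z\rangle_A|^2}{\|z\|_A^2} + \|z\|_A^2\,\Bigl|\lambda-\frac{\langle x,z\rangle_A}{\|z\|_A^2}\Bigr|^2,$$
whence $\inf_{\lambda\in\mathbb{C}}\|x-\lambda z\|_A^2 = \|x\|_A^2 - |\langle x,z\rangle_A|^2/\|z\|_A^2$, the infimum being in fact attained so that no limiting argument is needed.

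Substituting this value, the right-hand side of the claimed bound simplifies to
$$\|z\|_A\left(\|x\|_A - \frac{\|x\|_A^2 - |\langle x,z\rangle_A|^2/\|z\|_A^2}{2\|x\|_A}\right) = \frac{\|x\|_A^2\|z\|_A^2 + |\langle x,z\rangle_A|^2}{2\,\|x\|_A\|z\|_A}.$$
Hence the lemma is equivalent to $2\|x\|_A\|z\|_A\,|\langle x,z\rangle_A| \le \|x\|_A^2\|z\|_A^2 + |\langle x,z\rangle_A|^2$, that is, to $\bigl(\|x\|_A\|z\|_A - |\langle x,z\rangle_A|\bigr)^2 \ge 0$, which is evident.

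I do not anticipate any genuine obstacle: the only point meriting a moment's care is the degeneracy of the seminorm in the case $\|z\|_A=0$ — which explains why the hypothesis asks only for $\|x\|_A\neq 0$ — and after disposing of it the computation is entirely routine.
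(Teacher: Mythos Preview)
Your argument is correct. The paper does not supply its own proof of this lemma; it is quoted verbatim from \cite{KZLAA} and used as a tool, so there is no in-paper argument to compare against. Your approach---evaluating $\inf_{\lambda}\|x-\lambda z\|_A^2$ explicitly via completion of the square and reducing the inequality to $(\|x\|_A\|z\|_A-|\langle x,z\rangle_A|)^2\ge 0$---is the natural elementary route and matches the standard derivation of this refined Cauchy--Schwarz bound; the separate handling of the degenerate case $\|z\|_A=0$ is also done correctly.
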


We now develop a bound.

\begin{theorem}
\label{th17} Suppose $S\in {\mathcal{B}}_{A}({\mathcal{H}})$. Then for all $\alpha\in[0,1],$ 
\begin{equation*}
d\omega _{A}^{2}\left( S\right) \leq \left\Vert \alpha S^{\sharp
	_{A}}S+\left( 1-\alpha \right) SS^{\sharp _{A}}+\left( S^{\sharp
	_{A}}S\right) ^{2}\right\Vert _{A}-\left(\alpha \mu+(1-\alpha)\eta\right),
\end{equation*}
\textit{where} 
\begin{equation*}
	\mu=\underset{\|z\|_A=1}{\inf}\left[\underset{\lambda\in\mathbb C}{\inf}\|Sz-\lambda z\|^2_A-\frac{\underset{\lambda\in\mathbb C}{\inf}\|Sz-\lambda z\|^4_A}{4\|Sz\|^2_A}\right]
\end{equation*}
\textit{and}
 \begin{equation*}
	\eta=\underset{\|z\|_A=1}{\inf}\left[\underset{\lambda\in\mathbb C}{\inf}\|S^{\sharp_{A}}z-\lambda z\|^2_A-\frac{\underset{\lambda\in\mathbb C}{\inf}\|S^{\sharp_{A}}z-\lambda z\|^4_A}{4\|S^{\sharp_{A}}z\|^2_A}\right].
\end{equation*} 
\end{theorem}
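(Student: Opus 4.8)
The plan is to start, as in all the previous theorems, from the identity
\[
\left|\langle Sz,z\rangle_A\right|^2+\|Sz\|_A^4=\left|\langle (S+S^{\sharp_A}S)z,z\rangle_A\right|^2-2\|Sz\|_A^2\,\langle \Re_A(S)z,z\rangle_A
\]
for a fixed $z\in\mathcal H$ with $\|z\|_A=1$. However, here I would instead rewrite the left-hand side directly in the form
\[
\left|\langle Sz,z\rangle_A\right|^2+\|Sz\|_A^4=\langle S^{\sharp_A}Sz,z\rangle_A^{\,2}+\left|\langle Sz,z\rangle_A\right|^2,
\]
using $\|Sz\|_A^4=\langle S^{\sharp_A}Sz,z\rangle_A^2$, and then apply Lemma~\ref{L1.2} to the $A$-positive operator $S^{\sharp_A}S$ to get $\langle S^{\sharp_A}Sz,z\rangle_A^2\le \langle (S^{\sharp_A}S)^2z,z\rangle_A$.

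Next I would handle the term $\left|\langle Sz,z\rangle_A\right|^2$. Writing it as the product $|\langle Sz,z\rangle_A|\cdot|\langle z,S^{\sharp_A}z\rangle_A|$ (or equivalently $|\langle Sz,z\rangle_A|\cdot|\langle S^{\sharp_A}z,z\rangle_A|$, since $\langle Sz,z\rangle_A=\overline{\langle S^{\sharp_A}z,z\rangle_A}$, so the two moduli agree), I would apply Lemma~\ref{KKK} to each factor: with $x=Sz$, $z=z$ we get $|\langle Sz,z\rangle_A|\le \|Sz\|_A-\frac{\inf_{\lambda}\|Sz-\lambda z\|_A^2}{2\|Sz\|_A}$, and with $x=S^{\sharp_A}z$ we get the analogous bound with $S^{\sharp_A}$ in place of $S$. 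Introducing the convex combination parameter $\alpha\in[0,1]$, I would split
\[
\left|\langle Sz,z\rangle_A\right|^2=\alpha\left|\langle Sz,z\rangle_A\right|^2+(1-\alpha)\left|\langle S^{\sharp_A}z,z\rangle_A\right|^2,
\]
bound the first copy using the $S$-version of Lemma~\ref{KKK} (squared, after discarding a nonnegative cross term so that $|\langle Sz,z\rangle_A|^2\le \|Sz\|_A^2-\inf_\lambda\|Sz-\lambda z\|_A^2+\frac{\inf_\lambda\|Sz-\lambda z\|_A^4}{4\|Sz\|_A^2}$) and the second copy using the $S^{\sharp_A}$-version. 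Using $\|Sz\|_A^2=\langle S^{\sharp_A}Sz,z\rangle_A$ and $\|S^{\sharp_A}z\|_A^2=\langle SS^{\sharp_A}z,z\rangle_A$, this produces
\[
\left|\langle Sz,z\rangle_A\right|^2+\|Sz\|_A^4\le \big\langle\big(\alpha S^{\sharp_A}S+(1-\alpha)SS^{\sharp_A}+(S^{\sharp_A}S)^2\big)z,z\big\rangle_A-\big(\alpha\,\mu(z)+(1-\alpha)\,\eta(z)\big),
\]
where $\mu(z)=\inf_\lambda\|Sz-\lambda z\|_A^2-\frac{\inf_\lambda\|Sz-\lambda z\|_A^4}{4\|Sz\|_A^2}$ and $\eta(z)$ is the same expression with $S$ replaced by $S^{\sharp_A}$. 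Finally I would bound the inner-product term by the $A$-operator seminorm of the (clearly $A$-selfadjoint, in fact $A$-positive) operator $\alpha S^{\sharp_A}S+(1-\alpha)SS^{\sharp_A}+(S^{\sharp_A}S)^2$, replace $\mu(z),\eta(z)$ by their infima $\mu,\eta$ over the unit $A$-sphere, and take the supremum over $\|z\|_A=1$ to conclude.

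The main obstacle is bookkeeping around the subtracted terms: Lemma~\ref{KKK} gives $|\langle x,z\rangle_A|\le\|z\|_A(\|x\|_A-\frac{\inf_\lambda\|x-\lambda z\|_A^2}{2\|x\|_A})$, and squaring this and keeping only what survives requires care that $\|x\|_A-\frac{\inf_\lambda\|x-\lambda z\|_A^2}{2\|x\|_A}\ge 0$ (true since $\inf_\lambda\|x-\lambda z\|_A^2\le\|x\|_A^2$) so that the squaring is monotone; one must also separately treat the degenerate case $\|Sz\|_A=0$ (resp. $\|S^{\sharp_A}z\|_A=0$), where the corresponding $\mu(z)$ or $\eta(z)$ term should be read as $0$, exactly mirroring the convention in Lemma~\ref{KZLAA1}. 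A secondary point is to confirm that $\alpha S^{\sharp_A}S+(1-\alpha)SS^{\sharp_A}+(S^{\sharp_A}S)^2\in\mathcal B_A(\mathcal H)$ and is $A$-selfadjoint so that its $A$-seminorm equals the supremum of $\langle\cdot z,z\rangle_A$ over $\|z\|_A=1$; this follows from item (2) in the preliminaries together with $S,S^{\sharp_A}\in\mathcal B_A(\mathcal H)$ and the fact that $\mathcal B_A(\mathcal H)$ is an algebra.
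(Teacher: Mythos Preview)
Your proposal is correct and follows essentially the same route as the paper: split $|\langle Sz,z\rangle_A|^2$ as a convex combination of $|\langle Sz,z\rangle_A|^2$ and $|\langle S^{\sharp_A}z,z\rangle_A|^2$, apply Lemma~\ref{KKK} to each factor and square, use Lemma~\ref{L1.2} on $\|Sz\|_A^4=\langle S^{\sharp_A}Sz,z\rangle_A^2$, then pass to the $A$-seminorm and to the infima $\mu,\eta$. One cosmetic remark: when you square the bound from Lemma~\ref{KKK} you write ``after discarding a nonnegative cross term'', but in fact the expansion $(\|Sz\|_A-\tfrac{c}{2\|Sz\|_A})^2=\|Sz\|_A^2-c+\tfrac{c^2}{4\|Sz\|_A^2}$ is exact, so nothing is discarded; the only point that needs care (which you correctly flag) is nonnegativity of $\|Sz\|_A-\tfrac{c}{2\|Sz\|_A}$ so that squaring preserves the inequality.
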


\begin{proof}
Take $z\in {\mathcal{H}}$, $\|z\|_A=1$ and  $\alpha \in \left[ 
0,1 \right] $. By using lemma \ref{KKK}, we obtain
\begin{eqnarray*}
\left\vert \left\langle Sz,z\right\rangle _{A}\right\vert ^{2} 
&=&\alpha \left\vert \left\langle Sz,z\right\rangle _{A}\right\vert
^{2}+\left( 1-\alpha \right) \left\vert \left\langle z,S^{\sharp
_{A}}z\right\rangle _{A}\right\vert ^{2} \\
&\leq &\alpha\left( \|Sz\|_A-\frac{\underset{\lambda\in\mathbb C}{\inf}\|Sz-\lambda z\|^2_A}{2\|Sz\|_A}\right)^2\\
&&+\left( 1-\alpha \right)
\left( \|S^{\sharp
	_{A}}z\|_A-\frac{\underset{\lambda\in\mathbb C}{\inf}\|S^{\sharp
		_{A}}z-\lambda z\|^2_A}{2\|S^{\sharp
		_{A}}z\|_A}\right)^2 \\
&=&\alpha \|Sz\|^2_A+ (1-\alpha)\|S^{\sharp
	_{A}}z\|^2_A-\alpha \left(\underset{\lambda\in\mathbb C}{\inf}\|Sz-\lambda z\|^2_A-\frac{\underset{\lambda\in\mathbb C}{\inf}\|Sz-\lambda z\|^4_A}{4\|Sz\|^2_A}\right)\\
	&&-(1-\alpha)\left(\underset{\lambda\in\mathbb C}{\inf}\|S^{\sharp_{A}}z-\lambda z\|^2_A-\frac{\underset{\lambda\in\mathbb C}{\inf}\|S^{\sharp_{A}}z-\lambda z\|^4_A}{4\|S^{\sharp_{A}}z\|^2_A}\right)\\
&\leq&\left\langle \left( \alpha S^{\sharp _{A}}S+\left( 1-\alpha \right)
SS^{\sharp _{A}}\right) z,z\right\rangle _{A}-\left(\alpha \mu+(1-\alpha)\eta\right)\text{.}
\end{eqnarray*}
Therefore, we have  
\begin{eqnarray*}
&&\left\vert \left\langle Sz,z\right\rangle _{A}\right\vert ^{2}+\left\Vert
Sz\right\Vert _{A}^{4} \\
&=&\left\langle \left( \alpha S^{\sharp _{A}}S+\left( 1-\alpha \right)
SS^{\sharp _{A}}\right) z,z\right\rangle _{A}+\left\langle S^{\sharp
_{A}}Sz,z\right\rangle _{A}^{2}-\left(\alpha \mu+(1-\alpha)\eta\right) \\
&\leq &\left\langle \left( \alpha S^{\sharp _{A}}S+\left( 1-\alpha \right)
SS^{\sharp _{A}}\right) z,z\right\rangle _{A}+\left\langle \left( S^{\sharp
_{A}}S\right) ^{2}z,z\right\rangle _{A}-\left(\alpha \mu+(1-\alpha)\eta\right) \\
&&\text{(by Lemma \ref{L1.2})} \\
&\leq &\left\Vert \alpha S^{\sharp _{A}}S+\left( 1-\alpha \right) SS^{\sharp
_{A}}+\left( S^{\sharp _{A}}S\right) ^{2}\right\Vert _{A}-\left(\alpha \mu+(1-\alpha)\eta\right)\text{.}
\end{eqnarray*}
This gives 
\begin{equation}
d\omega _{A}^{2}\left( S \right) \leq \left\Vert \alpha S^{\sharp
_{A}}S+\left( 1-\alpha \right) SS^{\sharp _{A}}+\left( S^{\sharp
_{A}}S\right) ^{2}\right\Vert _{A}-\left(\alpha \mu+(1-\alpha)\eta\right)\text{.}  \label{C}
\end{equation}

\end{proof}

\begin{remark}
\label{re181} (i) Form Theorem \ref{th17} (for $\alpha=0$) we obtain 
\begin{align}  \label{eq17}
d\omega _{A}^{2}\left( S \right) \leq \|SS^{\sharp _{A}}+(S^{\sharp
_{A}}S)^2\|_A-\eta.
\end{align}
(ii) The bound \eqref{eq17} is better than the existing bound in \cite%
{KF_HJM}, namely 
\begin{align}  \label{eq18}
d\omega _{A}^{2}\left( S \right) \leq \sqrt{\omega_{A}\left((S^{\sharp
_{A}}S)^2+(S^{\sharp _{A}}S)^4\right)+2 \omega_{A}^2(S^{\sharp _{A}}S^2)}
\end{align}
for some operators. For example, if we consider $A=%
\begin{pmatrix}
1 & 0 \\ 
0 & 2%
\end{pmatrix}%
$ and $S=
\begin{pmatrix}
0 & 1 \\ 
0 & 0%
\end{pmatrix}%
$ then \eqref{eq18} gives $d\omega _{A}^{2}\left( S\right) \leq \frac{\sqrt 5%
}{4}$ but \eqref{eq17} gives $d\omega _{A}^{2}\left( S\right) \leq\frac{1}{2%
}.$
\end{remark}
   

We now present another upper bound.

\begin{theorem}
\label{THH1}\bigskip If $S\in {\mathcal{B}}_{A}({\mathcal{H}})$ then  
$
d\omega _{A}^{2}\left( S\right) \leq \min \left\{ \beta ,\gamma \right\} 
\text{,}
$
where 
\begin{equation*}
\beta =\min_{\alpha \in \left[ 0,1\right] }\left\{ \left\Vert \frac{\alpha }{
4}S^{\sharp _{A}}S+\left( 1-\frac{3 \alpha }{4} \right) SS^{\sharp
_{A}}+\left( S^{\sharp _{A}}S\right) ^{2}\right\Vert _{A}+\frac{\alpha }{2}
\omega _{A}\left( S^2\right)-(1-\alpha)\eta \right\},
\end{equation*}
\begin{equation*}
\gamma =\min_{\alpha \in \left[ 0,1\right] }\left\{ \left\Vert \left( 1- 
\frac{3 \alpha}{4} \right) S^{\sharp _{A}}S+\frac{\alpha }{4}SS^{\sharp
_{A}}+\left( S^{\sharp _{A}}S\right) ^{2}\right\Vert _{A}+\frac{\alpha }{2}
\omega _{A}\left( S^2\right)-(1-\alpha)\mu \right\}
\end{equation*}
and $\mu, \eta$ are as in Theorem \ref{th17}.
  \end{theorem}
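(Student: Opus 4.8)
\textbf{Proof plan for Theorem \ref{THH1}.}
The plan is to mimic the proof of Theorem \ref{th17}, but to split the quantity $|\langle Sz,z\rangle_A|^2$ in a more flexible way so that a portion of it is estimated by Lemma \ref{KKK} (producing the $\eta$ or $\mu$ correction term) and the remaining portion is bounded using the elementary identity $|\langle Sz,z\rangle_A|^2 = |\langle \Re_A(S)z,z\rangle_A|^2 + |\langle \Im_A(S)z,z\rangle_A|^2$ together with $\langle\Re_A(S)z,z\rangle_A^2 \le \langle (\Re_A(S))^2 z,z\rangle_A$ (and the analogous inequality for $\Im_A(S)$, both instances of Lemma \ref{L1.2}), and then combined with the identity $(\Re_A(S))^2+(\Im_A(S))^2 = \tfrac12(S^{\sharp_A}S + SS^{\sharp_A})$. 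Fix $z\in\mathcal H$ with $\|z\|_A=1$ and $\alpha\in[0,1]$. First I would write
\begin{equation*}
|\langle Sz,z\rangle_A|^2 = \Big(1-\frac{\alpha}{2}\Big)|\langle Sz,z\rangle_A|^2 + \frac{\alpha}{2}|\langle Sz,z\rangle_A|^2,
\end{equation*}
handle the first summand by writing $|\langle z, S^{\sharp_A} z\rangle_A|^2 = |\langle Sz,z\rangle_A|^2$ and applying Lemma \ref{KKK} to $\langle S^{\sharp_A}z, z\rangle_A$ exactly as in Theorem \ref{th17} (this is where the $(1-\alpha)\eta$ term will come from, after taking the infimum over unit $z$), and handle the second summand via $|\langle Sz,z\rangle_A|^2 = |\langle\Re_A(S)z,z\rangle_A|^2 + |\langle\Im_A(S)z,z\rangle_A|^2 \le \langle(\Re_A(S))^2z,z\rangle_A + |\langle\Im_A(S)z,z\rangle_A|^2 \le \langle(\Re_A(S))^2z,z\rangle_A + \omega_A(\Im_A(S))$, noting $\Im_A(S)=\Im_A(S)^{\sharp_A}$ so $\omega_A(\Im_A(S))=\|\Im_A(S)\|_A$; alternatively, keep $|\langle\Im_A(S)z,z\rangle_A|$ and absorb it into an $\omega_A(S^2)$ term by recognizing $\Re_A(S)\cdot\Im_A(S)$-type combinations. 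I would then add $\|Sz\|_A^4 \le \langle (S^{\sharp_A}S)^2 z,z\rangle_A$ (Lemma \ref{L1.2} again) to the estimate.

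The key algebraic step is to reorganize the resulting sum of $\langle(\cdot)z,z\rangle_A$ terms into a single expression $\langle M_\alpha z,z\rangle_A + \tfrac{\alpha}{2}\,\omega_A(S^2)$, where $M_\alpha = \tfrac{\alpha}{4}S^{\sharp_A}S + (1-\tfrac{3\alpha}{4})SS^{\sharp_A} + (S^{\sharp_A}S)^2$; the coefficients $\tfrac{\alpha}{4}$ and $1-\tfrac{3\alpha}{4}$ should fall out of combining $\tfrac{\alpha}{2}\cdot\tfrac12(S^{\sharp_A}S+SS^{\sharp_A})$ (the split part) with $(1-\tfrac{\alpha}{2})$ times the pure $SS^{\sharp_A}$ contribution coming from the Lemma \ref{KKK} estimate $|\langle S^{\sharp_A}z,z\rangle_A|^2\le\langle SS^{\sharp_A}z,z\rangle_A - (\text{correction})$. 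Since $M_\alpha$ is a sum of $A$-positive operators it is $A$-positive, hence $\langle M_\alpha z,z\rangle_A\le\|M_\alpha\|_A$, giving the $\beta$ bound after taking $\sup$ over $z$ and then $\min$ over $\alpha$. The $\gamma$ bound is obtained by the symmetric argument: split off the other factor, i.e.\ use Lemma \ref{KKK} on $\langle Sz,z\rangle_A$ directly (producing $\mu$ and a pure $S^{\sharp_A}S$ term), giving coefficients $1-\tfrac{3\alpha}{4}$ and $\tfrac{\alpha}{4}$ in the transposed positions. Finally $d\omega_A^2(S)\le\min\{\beta,\gamma\}$ follows since both bounds hold simultaneously.

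The main obstacle I anticipate is bookkeeping the coefficients correctly so that the cross term genuinely collapses to $\tfrac{\alpha}{2}\omega_A(S^2)$ rather than to a clumsier expression: one must be careful that the term $|\langle\Im_A(S)z,z\rangle_A|^2$ (or its square-rooted partner) is the piece that gets absorbed, and that $\|Sz\|_A^2 = \langle S^{\sharp_A}Sz,z\rangle_A$ is correctly routed either into $M_\alpha$ or used as the $\|S\|_A^2$-type weight — in particular ensuring no stray factor of $\tfrac12$ survives and that the $(S^{\sharp_A}S)^2$ term keeps coefficient exactly $1$. A secondary subtlety is the passage from the pointwise inequality to the stated bound: the correction terms $\mu,\eta$ involve an infimum over unit vectors taken \emph{outside} the $\langle\cdot z,z\rangle_A$ expression, so one should subtract $\inf_z(\cdots)$ before taking $\sup_z$ of the main part, exactly as done in Theorem \ref{th17}; this is legitimate because the correction is subtracted, and it is this ordering that lets the $-(1-\alpha)\eta$ and $-(1-\alpha)\mu$ appear cleanly in $\beta$ and $\gamma$ respectively.
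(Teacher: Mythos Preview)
Your plan has a genuine gap: the Cartesian-decomposition route you propose for the ``$\alpha$ part'' does not produce the bound in the statement. If you split with weights $1-\tfrac{\alpha}{2}$ and $\tfrac{\alpha}{2}$ and then bound $|\langle\Re_A(S)z,z\rangle_A|^2+|\langle\Im_A(S)z,z\rangle_A|^2$ by $\langle\tfrac12(S^{\sharp_A}S+SS^{\sharp_A})z,z\rangle_A$, you get the operator combination
\[
\tfrac{\alpha}{4}S^{\sharp_A}S + \bigl(1-\tfrac{\alpha}{4}\bigr)SS^{\sharp_A}+(S^{\sharp_A}S)^2,
\]
not $\tfrac{\alpha}{4}S^{\sharp_A}S + \bigl(1-\tfrac{3\alpha}{4}\bigr)SS^{\sharp_A}+(S^{\sharp_A}S)^2$; you also obtain a correction $-(1-\tfrac{\alpha}{2})\eta$ rather than $-(1-\alpha)\eta$; and, crucially, \emph{no} $\tfrac{\alpha}{2}\omega_A(S^2)$ term appears at all. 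Your suggestion to ``absorb $|\langle\Im_A(S)z,z\rangle_A|$ into an $\omega_A(S^2)$ term by recognizing $\Re_A(S)\cdot\Im_A(S)$-type combinations'' is not a valid step: there is no inequality that turns $|\langle\Im_A(S)z,z\rangle_A|^2$ into $|\langle S^2z,z\rangle_A|$ with the needed coefficients.

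What the paper does instead is split with weights $\alpha$ and $1-\alpha$, apply Lemma~\ref{KKK} only to the $(1-\alpha)$ piece (this yields exactly $(1-\alpha)\langle SS^{\sharp_A}z,z\rangle_A-(1-\alpha)\eta$), and then treat the remaining $\alpha|\langle Sz,z\rangle_A|^2$ not via the Cartesian decomposition but via the Buzano-type Lemma~\ref{KZ} with $a=Sz$, $b=S^{\sharp_A}z$, $c=z$:
\[
\alpha|\langle Sz,z\rangle_A|^2=\alpha\bigl|\langle Sz,z\rangle_A\langle z,S^{\sharp_A}z\rangle_A\bigr|\le \tfrac{\alpha}{2}\bigl(\|Sz\|_A\|S^{\sharp_A}z\|_A+|\langle Sz,S^{\sharp_A}z\rangle_A|\bigr).
\]
After AM--GM on the product and using $\langle Sz,S^{\sharp_A}z\rangle_A=\langle S^2z,z\rangle_A$, this gives $\tfrac{\alpha}{4}\langle(S^{\sharp_A}S+SS^{\sharp_A})z,z\rangle_A+\tfrac{\alpha}{2}|\langle S^2z,z\rangle_A|$, and now the $SS^{\sharp_A}$ coefficient is $\tfrac{\alpha}{4}+(1-\alpha)=1-\tfrac{3\alpha}{4}$, matching the statement. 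So the missing idea is precisely the use of Lemma~\ref{KZ} on the $\alpha$ portion; replace the Cartesian decomposition with that and the coefficients (including the $\omega_A(S^2)$ term and the $(1-\alpha)\eta$ correction) fall out immediately.
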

  
\begin{proof}
Suppose $z\in {\mathcal{H}}$, $\|z\|_A=1$ and $\alpha \in \left[ 
0,1 \right] $. Then we obtain 
\begin{eqnarray*}
\left\vert \left\langle Sz,z\right\rangle _{A}\right\vert ^{2} 
&=&\alpha \left\vert \left\langle Sz,z\right\rangle _{A}\right\vert
^{2}+\left( 1-\alpha \right) \left\vert \left\langle z,T^{\sharp
_{A}}z\right\rangle _{A}\right\vert ^{2} \\
&\leq &\alpha \left\vert \left\langle Sz,z\right\rangle _{A}\right\vert
^{2}+\left( 1-\alpha \right)\left( \|S^{\sharp
	_{A}}z\|_A-\frac{\underset{\lambda\in\mathbb C}{\inf}\|S^{\sharp
		_{A}}z-\lambda z\|^2_A}{2\|S^{\sharp
		_{A}}z\|_A}\right)^2\\
	&&\text{(by Lemma \ref{KKK})} \\
&=& \alpha \left\vert \left\langle Sz,z\right\rangle _{A}\right\vert
^{2}+\left( 1-\alpha \right)\|S^{\sharp	_{A}}z\|^2_A\\
&&-( 1-\alpha)\left(\underset{\lambda\in\mathbb C}{\inf}\|S^{\sharp_{A}}z-\lambda z\|^2_A-\frac{\underset{\lambda\in\mathbb C}{\inf}\|S^{\sharp_{A}}z-\lambda z\|^4_A}{4\|S^{\sharp_{A}}z\|^2_A}\right)\\
&\leq&\alpha \left\vert \left\langle Sz,z\right\rangle _{A}\right\vert
^{2}+\left( 1-\alpha \right)\|S^{\sharp	_{A}}z\|^2_A-( 1-\alpha)\eta.
\end{eqnarray*}
So,   
\begin{eqnarray*}
&&\left\vert \left\langle Sz,z\right\rangle _{A}\right\vert ^{2}+\left\Vert
Sz\right\Vert _{A}^{4} \\
&\leq &\alpha \left\vert \left\langle Sz,z\right\rangle _{A}\right\vert
^{2}+\left( 1-\alpha \right) \left\langle SS^{\sharp _{A}}z,z\right\rangle
_{A}+\left\langle S^{\sharp _{A}}Sz,z\right\rangle _{A}^{2}-( 1-\alpha)\eta \\
&\leq &\alpha \left\vert \left\langle Sz,z\right\rangle _{A}\right\vert
^{2}+\left( 1-\alpha \right) \left\langle SS^{\sharp _{A}}z,z\right\rangle
_{A}+\left\langle \left( S^{\sharp _{A}}S\right) ^{2}z,z\right\rangle _{A}-( 1-\alpha)\eta \\
&&\text{(by Lemma \ref{L1.2})} \\
&\leq &\frac{\alpha }{2}\left( \left\Vert Sz\right\Vert _{A}\left\Vert
S^{\sharp _{A}}z\right\Vert _{A}+\left\vert \left\langle Sz,S^{\sharp
_{A}}z\right\rangle _{A}\right\vert \right) +\left( 1-\alpha \right)
\left\langle SS^{\sharp _{A}}z,z\right\rangle _{A}+\left\langle \left(
S^{\sharp _{A}}S\right) ^{2}z,z\right\rangle _{A}\\
&&-( 1-\alpha)\eta \,\,\,\,\,\,\text{(by Lemma \ref{KZ})} \\
&\leq &\frac{\alpha }{4}\left( \left\Vert Sz\right\Vert _{A}^{2}+\left\Vert
S^{\sharp _{A}}z\right\Vert _{A}^{2}\right) +\frac{\alpha }{2}\left\vert
\left\langle S^{2}z,z\right\rangle _{A}\right\vert +\left( 1-\alpha \right)
\left\langle SS^{\sharp _{A}}z,z\right\rangle _{A}+\left\langle \left(
S^{\sharp _{A}}S\right) ^{2}z,z\right\rangle _{A}\\
&&-( 1-\alpha)\eta \,\,\,\,
\text{(by the AM-GM inequality)} \\
&=&\left\langle \left( \frac{\alpha }{4}S^{\sharp _{A}}S+\left( 1-\frac{3 \alpha }{4}
 \right) SS^{\sharp _{A}}+\left( S^{\sharp _{A}}S\right) ^{2}\right)
z,z\right\rangle _{A}+\frac{\alpha }{2}\left\vert \left\langle
S^{2}z,z\right\rangle _{A}\right\vert-( 1-\alpha)\eta\\
&\leq &\left\Vert \frac{\alpha }{4}S^{\sharp _{A}}S+\left( 1-\frac{3 \alpha }{4}
 \right) SS^{\sharp _{A}}+\left( S^{\sharp _{A}}S\right)
^{2}\right\Vert _{A}+\frac{\alpha }{2}\omega _{A}\left( S^2\right)-( 1-\alpha)\eta \text{.}
\end{eqnarray*}
This gives 
\begin{equation*}
d\omega _{A}^{2}\left( S\right) \leq \left\Vert \frac{\alpha }{4}S^{\sharp
_{A}}S+\left( 1-\frac{3  \alpha }{4} \right) SS^{\sharp _{A}}+\left( S^{\sharp
_{A}}S\right) ^{2}\right\Vert _{A}+\frac{\alpha }{2}\omega _{A}\left(
S^2\right)-( 1-\alpha)\eta \text{.}
\end{equation*}
Considering the minimum over $\alpha \in \left[ 0,1\right] $, we get  
\begin{equation}
d\omega _{A}^{2}\left( S\right) \leq \min_{\alpha \in \left[ 0,1\right]
}\left\{ \left\Vert \frac{\alpha }{4}S^{\sharp _{A}}S+\left( 1-\frac{3 \alpha }{4}
 \right) SS^{\sharp _{A}}+\left( S^{\sharp _{A}}S\right)
^{2}\right\Vert _{A}+\frac{\alpha }{2}\omega _{A}\left( S^2\right)-( 1-\alpha)\eta \right\}
\text{.}  \label{BB}
\end{equation}

Similarly, we get  
\begin{equation}
d\omega _{A}^{2}\left( S\right) \leq \min_{\alpha \in \left[ 0,1\right]
}\left\{ \left\Vert \left( 1-\frac{3 \alpha  }{4}\right) S^{\sharp _{A}}S+ 
\frac{\alpha }{4}SS^{\sharp _{A}}+\left( S^{\sharp _{A}}S\right)
^{2}\right\Vert _{A}+\frac{\alpha }{2}\omega _{A}\left( S^2\right)-( 1-\alpha)\mu \right\}. 
  \label{CC}
\end{equation}
Combining (\ref{BB}) and (\ref{CC}) we get the result as desired.
\end{proof}

From Theorem \ref{THH1} (for $\alpha =1$) we obtain the bound:

\begin{cor}
If $S\in {\mathcal{B}}_{A}({\mathcal{H}})$ then  
\begin{align}  \label{eq20}
d\omega _{A}^{2}\left( S \right) \leq \frac{1}{4}\left\Vert S^{\sharp
_{A}}S+SS^{\sharp _{A}}+4\left( S^{\sharp _{A}}S\right) ^{2}\right\Vert _{A}+%
\frac{1}{2}\omega _{A}\left( S^2\right) .
\end{align}
\end{cor}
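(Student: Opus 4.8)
The plan is to obtain this corollary by specializing Theorem~\ref{THH1} at the endpoint $\alpha = 1$. Theorem~\ref{THH1} gives $d\omega_A^2(S) \leq \min\{\beta,\gamma\}$, where $\beta$ is itself defined as a minimum over $\alpha \in [0,1]$ of a certain expression; consequently, for any single admissible value of $\alpha$ the corresponding expression is an upper bound for $\beta$, hence for $d\omega_A^2(S)$. So the whole argument amounts to plugging in one value of the parameter.

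First I would substitute $\alpha = 1$ into the formula defining $\beta$. The coefficient $\tfrac{\alpha}{4}$ of $S^{\sharp_A}S$ becomes $\tfrac14$; the coefficient $1-\tfrac{3\alpha}{4}$ of $SS^{\sharp_A}$ becomes $\tfrac14$; the term $(S^{\sharp_A}S)^2$ is unchanged; the term $\tfrac{\alpha}{2}\omega_A(S^2)$ becomes $\tfrac12\omega_A(S^2)$; and crucially the term $-(1-\alpha)\eta$ vanishes, so the auxiliary quantity $\eta$ plays no role whatsoever. This produces
$$d\omega_A^2(S) \leq \left\Vert \tfrac14 S^{\sharp_A}S + \tfrac14 SS^{\sharp_A} + (S^{\sharp_A}S)^2 \right\Vert_A + \tfrac12\,\omega_A(S^2).$$

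Finally, using that $\Vert\cdot\Vert_A$ is a seminorm and hence absolutely homogeneous, I would pull the scalar $\tfrac14$ out of the seminorm by writing $\tfrac14 S^{\sharp_A}S + \tfrac14 SS^{\sharp_A} + (S^{\sharp_A}S)^2 = \tfrac14\bigl(S^{\sharp_A}S + SS^{\sharp_A} + 4(S^{\sharp_A}S)^2\bigr)$, which yields exactly the stated inequality~\eqref{eq20}. There is no genuine obstacle here: one only needs to observe that $\alpha = 1$ lies in the admissible range $[0,1]$ and that scalar homogeneity of the $A$-seminorm is available. Running the identical specialization on $\gamma$ gives the same bound, thanks to the symmetry between $S^{\sharp_A}S$ and $SS^{\sharp_A}$ in that expression, so $\min\{\beta,\gamma\}$ at $\alpha=1$ collapses to the single quantity above and the proof is complete.
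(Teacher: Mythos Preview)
Your proposal is correct and follows exactly the approach indicated in the paper, which simply states ``From Theorem~\ref{THH1} (for $\alpha=1$) we obtain the bound'' without further elaboration. You have merely filled in the arithmetic of that specialization, including the vanishing of the $(1-\alpha)\eta$ term and the factoring of $\tfrac14$ out of the seminorm.
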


\begin{remark}
The bound \eqref{eq20} is better than the following bound in \cite{KF_HJM},
namely  
\begin{align}  \label{eq21}
d\omega _{A}^{2}\left( S \right) \leq \frac 1 2 \left[ \omega_{A}\left(
(S^{\sharp _{A}}S)^2+S^{\sharp _{A}}S\right)+\omega_{A}\left((S^{\sharp
_{A}}S)^2-S^{\sharp _{A}}S\right)\right]+\omega_{A}(S^{\sharp _{A}}S^2)
\end{align}
for some operator. If we consider the same example as in Remark \ref{re11},
the bound \eqref{eq21} gives $d\omega _{A}^{2}\left( S\right) \leq 24$
whereas \eqref{eq20} gives $d\omega _{A}^{2}\left( S \right) \leq 20.$
\end{remark}

To achieve the next result we need the inequality.

\begin{lemma}
\cite{H}\label{L11}\bigskip\ \ If $a,c>0$ and $0\leq \alpha \leq 1$ then  
\begin{equation*}
a^{\alpha }c^{1-\alpha } \, \leq \, \alpha a+\left( 1-\alpha \right) c\leq \left( 
\alpha a^{r}+\left( 1-\alpha \right) c^{r}\right) ^{\frac{1}{r}}\text{,} \quad \text{for all $r\geq 1$.}
\end{equation*}

\end{lemma}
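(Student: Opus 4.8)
The plan is to split the claim into its two component inequalities, each of which is a standard consequence of convexity, and then chain them.

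\textbf{Step 1: the weighted AM-GM inequality} $a^{\alpha}c^{1-\alpha}\le \alpha a+(1-\alpha)c$. First I would dispose of the endpoint cases $\alpha=0$ and $\alpha=1$, where both sides coincide. For $\alpha\in(0,1)$, since $a,c>0$ and $\log$ is concave on $(0,\infty)$, Jensen's inequality with weights $\alpha$ and $1-\alpha$ gives $\log\bigl(\alpha a+(1-\alpha)c\bigr)\ge \alpha\log a+(1-\alpha)\log c=\log\bigl(a^{\alpha}c^{1-\alpha}\bigr)$, and applying the increasing function $\exp$ to both sides yields the inequality. (Alternatively, one may simply quote Young's inequality with conjugate exponents $1/\alpha$ and $1/(1-\alpha)$ applied to $a^{\alpha}$ and $c^{1-\alpha}$.)

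\textbf{Step 2: the power-mean inequality} $\alpha a+(1-\alpha)c\le\bigl(\alpha a^{r}+(1-\alpha)c^{r}\bigr)^{1/r}$ for $r\ge 1$. Here I would use that $\phi(t)=t^{r}$ is convex on $[0,\infty)$ when $r\ge 1$ (indeed $\phi''(t)=r(r-1)t^{r-2}\ge 0$ on $(0,\infty)$, and $\phi$ is continuous at $0$), so Jensen with the same weights gives $\bigl(\alpha a+(1-\alpha)c\bigr)^{r}\le \alpha a^{r}+(1-\alpha)c^{r}$; raising both sides to the power $1/r$, which is increasing on $[0,\infty)$, gives the claim. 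If one prefers to avoid Jensen, this is Hölder's inequality applied to the vectors $(a,c)$ and $(1,1)$ with weights $\alpha,1-\alpha$ and exponents $r,\ r/(r-1)$.

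\textbf{Step 3.} Combining the inequalities from Steps 1 and 2 produces the full three-term chain. I do not expect any genuine obstacle: this is the classical interpolation between the geometric, arithmetic, and $r$-th power means, and the only care required is the trivial handling of the endpoint weights $\alpha\in\{0,1\}$ (and, if desired, noting that all three quantities coincide exactly when $a=c$).
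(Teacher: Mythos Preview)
Your argument is correct: Step~1 is the weighted AM--GM inequality via concavity of $\log$, Step~2 is the power-mean inequality via convexity of $t\mapsto t^r$ for $r\ge 1$, and the chain follows. The paper does not actually prove this lemma; it simply quotes it from Hardy--Littlewood--P\'olya \cite{H}, so you have supplied a self-contained proof where the paper relies on an external reference.
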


\begin{theorem}
\label{th22} If $S\in {\mathcal{B}}_{A}({\mathcal{H}})$ then  
\begin{equation*}
d\omega _{A}^{4n}\left( S\right) \leq 4^{n-1}\left\Vert \left( S^{\sharp
_{A}}S\right) ^{n}+\left( S^{\sharp _{A}}S\right) ^{2n}\right\Vert
_{A}\left\Vert \left( SS^{\sharp _{A}}\right) ^{n}+\left( S^{\sharp
_{A}}S\right) ^{2n}\right\Vert _{A} \quad \text{ for each $n\in 
\mathbb{N}
^{ }$.}
\end{equation*}

\end{theorem}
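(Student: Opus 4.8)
The plan is to fix $z\in\mathcal H$ with $\|z\|_A=1$, estimate $|\langle Sz,z\rangle_A|^2+\|Sz\|_A^4$ by a scalar expression in the two nonnegative quantities $a:=\langle S^{\sharp_A}Sz,z\rangle_A=\|Sz\|_A^2$ and $b:=\langle SS^{\sharp_A}z,z\rangle_A=\|S^{\sharp_A}z\|_A^2$ (nonnegativity holds because $S^{\sharp_A}S$ and $SS^{\sharp_A}$ are $A$-positive, by property~(2)), then raise the estimate to the power $2n$ and take the supremum, using that $d\omega_A^{4n}(S)=\sup_{\|z\|_A=1}\big(|\langle Sz,z\rangle_A|^2+\|Sz\|_A^4\big)^{2n}$ since $t\mapsto t^{2n}$ is increasing on $[0,\infty)$.

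First I would reduce to scalars: by Lemma \ref{LL} with $x=z$ one has $|\langle Sz,z\rangle_A|^2\le\sqrt{ab}$, while $\|Sz\|_A^4=a^2$, so $|\langle Sz,z\rangle_A|^2+\|Sz\|_A^4\le\sqrt{ab}+a^2$. Next I would use the elementary bound $(\sqrt{ab}+a^2)^2\le(a+a^2)(b+a^2)$, which upon expansion is equivalent to $2a^2\sqrt{ab}\le a^3+a^2b$, i.e.\ to $2\sqrt{ab}\le a+b$; taking $n$-th powers gives $(\sqrt{ab}+a^2)^{2n}\le(a+a^2)^n(b+a^2)^n$. Applying Lemma \ref{L11} with $\alpha=\tfrac12$ and $r=n$ (so that $(x+y)^n\le2^{n-1}(x^n+y^n)$ for $x,y\ge0$) to each of the two factors, I obtain $(a+a^2)^n(b+a^2)^n\le4^{n-1}(a^n+a^{2n})(b^n+a^{2n})$.

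Finally I would pass back to operators: since $S^{\sharp_A}S$ and $SS^{\sharp_A}$ are $A$-positive, Lemma \ref{L1.2} yields $a^n\le\langle(S^{\sharp_A}S)^nz,z\rangle_A$, $a^{2n}\le\langle(S^{\sharp_A}S)^{2n}z,z\rangle_A$ and $b^n\le\langle(SS^{\sharp_A})^nz,z\rangle_A$. Hence $a^n+a^{2n}\le\big\langle\big((S^{\sharp_A}S)^n+(S^{\sharp_A}S)^{2n}\big)z,z\big\rangle_A\le\big\|(S^{\sharp_A}S)^n+(S^{\sharp_A}S)^{2n}\big\|_A$ and, symmetrically, $b^n+a^{2n}\le\big\|(SS^{\sharp_A})^n+(S^{\sharp_A}S)^{2n}\big\|_A$; here I use $0\le\langle Mz,z\rangle_A\le\|M\|_A$ for $\|z\|_A=1$, which is legitimate because $\mathcal B_A(\mathcal H)$ is a $\sharp_A$-closed subalgebra, so all the operators appearing lie in $\mathcal B_A(\mathcal H)\subseteq\mathcal B_{A^{1/2}}(\mathcal H)$ and the relevant $A$-seminorms are finite. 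Chaining the three steps and taking the supremum over $\|z\|_A=1$ gives the theorem.

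I do not anticipate a serious obstacle; the only points requiring care are to pass to $\sqrt{ab}$ via Lemma \ref{LL} \emph{before} raising to a power (rather than using the cruder $|\langle Sz,z\rangle_A|\le\|Sz\|_A$), to observe that the disguised AM--GM step $(\sqrt{ab}+a^2)^2\le(a+a^2)(b+a^2)$ is precisely what produces the two separated factors, and to keep the estimate symmetric so that Lemma \ref{L11} can be invoked twice and reproduce the constant $4^{n-1}$.
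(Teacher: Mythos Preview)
Your proof is correct and follows essentially the same route as the paper's. The paper orders the steps slightly differently---first applying the power-mean inequality (Lemma~\ref{L11}) to pass to $n$th powers, then Lemma~\ref{LL}, then the Cauchy--Schwarz-type factoring $(ab+cd)^2\le(a^2+c^2)(b^2+d^2)$, then Lemma~\ref{L1.2}---but the ingredients and the intermediate bound $4^{n-1}(a^n+a^{2n})(b^n+a^{2n})$ (in your notation) are identical to yours.
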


\begin{proof}
Take $z\in {\mathcal{H}}$ and $\|z\|_A=1,$ we get 
\begin{eqnarray*}
&&\left\vert \left\langle Sz,z\right\rangle _{A}\right\vert ^{2}+\left\Vert
Sz\right\Vert _{A}^{4} \\
&\leq &2\left( \frac{\left\vert \left\langle Sz,z\right\rangle
_{A}\right\vert ^{2n}+\left\Vert Sz\right\Vert _{A}^{4n}}{2}\right) ^{\frac{
1 }{n}} \quad \text{(by Lemma \ref{L11})} \\
&=&2^{1-\frac{1}{n}}\left( \left\vert \left\langle Sz,z\right\rangle
_{A}\right\vert ^{2n}+\left\langle T^{\sharp _{A}}Sz,z\right\rangle
_{A}^{2n}\right) ^{\frac{1}{n}} \\
&\leq &2^{1-\frac{1}{n}}\left( \left\langle S^{\sharp _{A}}Sz,z\right\rangle
_{A}^{\frac{n}{2}}\left\langle SS^{\sharp _{A}}z,z\right\rangle _{A}^{\frac{
n }{2}}+\left\langle S^{\sharp _{A}}Sz,z\right\rangle _{A}^{2n}\right) ^{ 
\frac{ 1}{n}} \quad \text{(by Lemma \ref{LL})} \\
&\leq &2^{1-\frac{1}{n}}\left[ \left( \left\langle S^{\sharp
_{A}}Sz,z\right\rangle _{A}^{n}+\left\langle S^{\sharp
_{A}}Sz,z\right\rangle _{A}^{2n}\right) \left( \left\langle SS^{\sharp
_{A}}z,z\right\rangle _{A}^{n}+\left\langle S^{\sharp _{A}}Sz,z\right\rangle
_{A}^{2n}\right) \right] ^{\frac{1}{2n}} \\
&&\text{(since } (ab+cd )^2\leq {\left( a^{2}+c^{2}\right) \left(
b^{2}+d^{2}\right) }\text{ with }a,b,c,d \geq 0 
\text{)} \\
&\leq &2^{1-\frac{1}{n}}\left( \left\langle \left( S^{\sharp _{A}}S\right)
^{n}z,z\right\rangle _{A}+\left\langle \left( S^{\sharp _{A}}S\right)
^{2n}z,z\right\rangle _{A}\right) ^{\frac{1}{2n}} \\
&&\times \left( \left\langle \left( SS^{\sharp _{A}}\right)
^{n}z,z\right\rangle _{A}+\left\langle \left( S^{\sharp _{A}}S\right)
^{2n}z,z\right\rangle _{A}\right) ^{\frac{1}{2n}} \quad \text{(by Lemma \ref{L1.2})} \\
&\leq &2^{1-\frac{1}{n}}\left( \left\Vert \left( S^{\sharp _{A}}S\right)
^{n}+\left( S^{\sharp _{A}}S\right) ^{2n}\right\Vert _{A}\left\Vert \left(
SS^{\sharp _{A}}\right) ^{n}+\left( S^{\sharp _{A}}S\right) ^{2n}\right\Vert
_{A}\right) ^{\frac{1}{2n}}\text{.}
\end{eqnarray*}

Thus,  
\begin{equation*}
d\omega _{A}^{2}\left( S \right) \leq 2^{1-\frac{1}{n}}\left( \left\Vert
\left( S^{\sharp _{A}}S\right) ^{n}+\left( S^{\sharp _{A}}S\right)
^{2n}\right\Vert _{A}\left\Vert \left( SS^{\sharp _{A}}\right) ^{n}+\left(
S^{\sharp _{A}}S\right) ^{2n}\right\Vert _{A}\right) ^{\frac{1}{2n}}\text{.}
\end{equation*}
This gives the bound.
\end{proof}

From Theorem \ref{th22} (for $n=1$), we obtain the bound:

\begin{cor}
If $S\in {\mathcal{B}}_{A}({\mathcal{H}})$ then 
\begin{align}  \label{eq23}
d\omega_A^2(S)\leq \|SS^{\sharp_A}+(S^{\sharp_A}S)^2\|^{\frac 1
2}_A\|S^{\sharp_A}S+(S^{\sharp_A}S)^2\|^{\frac 1 2}_A.
\end{align}
\end{cor}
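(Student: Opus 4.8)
The plan is to derive the inequality \eqref{eq23} as the immediate specialization of Theorem~\ref{th22} at $n=1$. First I would invoke Theorem~\ref{th22} in its general form: for every $S\in{\mathcal{B}}_A({\mathcal{H}})$ and every $n\in\mathbb{N}$ one has $d\omega_A^{4n}(S)\leq 4^{n-1}\,\|(S^{\sharp_A}S)^n+(S^{\sharp_A}S)^{2n}\|_A\,\|(SS^{\sharp_A})^n+(S^{\sharp_A}S)^{2n}\|_A$. All the work has already been done there, so this corollary is purely a matter of reading off one instance of that statement.

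Next I would put $n=1$ and track how each piece simplifies. The numerical prefactor collapses, since $4^{n-1}=4^{0}=1$; the exponent on the left becomes $4n=4$; each factor $(S^{\sharp_A}S)^{n}$ and $(SS^{\sharp_A})^{n}$ reduces to $S^{\sharp_A}S$ and $SS^{\sharp_A}$ respectively; and $(S^{\sharp_A}S)^{2n}$ becomes $(S^{\sharp_A}S)^{2}$. This yields $d\omega_A^{4}(S)\leq \|S^{\sharp_A}S+(S^{\sharp_A}S)^{2}\|_A\,\|SS^{\sharp_A}+(S^{\sharp_A}S)^{2}\|_A$, i.e. a bound on the fourth power of the $A$-Davis--Wielandt radius.

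Finally, since both sides of this inequality are nonnegative real numbers, I would take square roots (which preserves the inequality) to obtain $d\omega_A^{2}(S)\leq \|SS^{\sharp_A}+(S^{\sharp_A}S)^{2}\|_A^{\frac12}\,\|S^{\sharp_A}S+(S^{\sharp_A}S)^{2}\|_A^{\frac12}$, where the two scalar factors may be written in either order. This is exactly \eqref{eq23}. I do not expect any genuine obstacle: the entire substance resides in Theorem~\ref{th22}, and the only points worth checking are the trivial numerical bookkeeping ($4^{n-1}=1$ when $n=1$ and the collapse of the $n$-th powers) together with the legitimacy of the square-root step, both of which are immediate.
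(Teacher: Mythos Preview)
Your proposal is correct and matches the paper's own approach exactly: the paper simply states that the corollary follows from Theorem~\ref{th22} by taking $n=1$, which is precisely the specialization (plus the trivial square-root step) you describe.
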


\begin{remark}
The bound \eqref{eq23} is better than the existing bound in \cite{BHA},
namely  
\begin{eqnarray}  \label{eq25}
d\omega_A^2(S)\leq
\|S^{\sharp_A}S+(S^{\sharp_A}S)^{\sharp_A}S^{\sharp_A}S\|_A
\end{eqnarray}
for some operators. For the same example as in Remark \ref{re181},  the
bound \eqref{eq25} gives $dw_A^2(S)\leq \frac3 4$, but \eqref{eq23} gives $
dw_A^2(S)\leq \frac {\sqrt{3}}{2\sqrt{2}}.$
\end{remark}

We now obtain a lower bound.

\begin{theorem}
If $S\in {\mathcal{B}}_{A}({\mathcal{H}})$ then  
\begin{equation*}
d\omega _{A}^{2}\left( S\right) \geq \max \left\{ c_{A}^{2}\left( S\right)
\left( 1+\left\Vert S\right\Vert _{A}^{2}\right) ,\omega _{A}^{2}\left(
S\right) \left( 1+c_{A}^{2}\left( S^{\sharp _{A}}S\right) \right) \right\} 
\text{.}
\end{equation*}
\end{theorem}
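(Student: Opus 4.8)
The plan is to estimate $|\langle Sz,z\rangle_A|^2 + \|Sz\|_A^4$ from below for a fixed unit vector $z$ (with $\|z\|_A = 1$) in two ways, one for each term inside the maximum, and then take the supremum over $z$.

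\emph{First bound.} For the term $c_A^2(S)\left(1+\|S\|_A^2\right)$, I would choose $z$ so that $\|Sz\|_A$ is close to $\|S\|_A$. For such $z$ we have $\|Sz\|_A^4 \approx \|S\|_A^2 \cdot \|Sz\|_A^2$, while always $|\langle Sz,z\rangle_A| \geq c_A(S)$ and, crucially, $\|Sz\|_A^2 = \langle S^{\sharp_A}Sz,z\rangle_A \geq |\langle Sz,z\rangle_A|^2 \geq c_A^2(S)$ by the Cauchy--Schwarz inequality for the $A$-seminorm (using $\langle S^{\sharp_A}Sz,z\rangle_A = \langle Sz,Sz\rangle_A$ together with $\langle z,z\rangle_A = 1$). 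Hence $|\langle Sz,z\rangle_A|^2 + \|Sz\|_A^4 \geq c_A^2(S) + \|S\|_A^2 c_A^2(S)$ after passing to the supremum; care is needed to make the two "close to'' arguments compatible, which is handled by taking a sequence $z_n$ with $\|Sz_n\|_A \to \|S\|_A$ and noting $|\langle Sz_n,z_n\rangle_A| \geq c_A(S)$ holds along the whole sequence, so $\limsup_n\left(|\langle Sz_n,z_n\rangle_A|^2 + \|Sz_n\|_A^4\right) \geq c_A^2(S)\left(1 + \|S\|_A^2\right)$.

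\emph{Second bound.} For the term $\omega_A^2(S)\left(1 + c_A^2(S^{\sharp_A}S)\right)$, I would instead take a sequence $z_n$ with $|\langle Sz_n,z_n\rangle_A| \to \omega_A(S)$. Along this sequence $\|Sz_n\|_A^4 = \langle S^{\sharp_A}Sz_n,z_n\rangle_A^2 \geq c_A^2(S^{\sharp_A}S)$ since $S^{\sharp_A}S \geq_A 0$ and $\langle S^{\sharp_A}Sz_n,z_n\rangle_A \geq c_A(S^{\sharp_A}S)$; also $\|Sz_n\|_A^2 = \langle S^{\sharp_A}Sz_n,z_n\rangle_A \geq |\langle Sz_n,z_n\rangle_A|^2$, so $\|Sz_n\|_A^4 \geq |\langle Sz_n,z_n\rangle_A|^2 \cdot \langle S^{\sharp_A}Sz_n,z_n\rangle_A \geq |\langle Sz_n,z_n\rangle_A|^2 c_A^2(S^{\sharp_A}S)$. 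Adding, $|\langle Sz_n,z_n\rangle_A|^2 + \|Sz_n\|_A^4 \geq |\langle Sz_n,z_n\rangle_A|^2\left(1 + c_A^2(S^{\sharp_A}S)\right)$, and letting $n\to\infty$ gives $d\omega_A^2(S) \geq \omega_A^2(S)\left(1 + c_A^2(S^{\sharp_A}S)\right)$.

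Taking the maximum of the two estimates completes the argument. The main obstacle is bookkeeping: the Crawford number and the operator seminorm are attained (or approached) on possibly different vectors, so the clean-looking product bounds only hold after committing to one sequence at a time; one must be careful to apply the right inequality ($\|Sz\|_A^2 \geq |\langle Sz,z\rangle_A|^2$ versus $\|Sz\|_A^2 \geq c_A^2(S)$ versus $\langle S^{\sharp_A}Sz,z\rangle_A \geq c_A(S^{\sharp_A}S)$) in each case, and to note that $\|Sz\|_A$ need not dominate $\|S\|_A$ pointwise, which is why the supremum and a limiting sequence are used rather than a single vector.
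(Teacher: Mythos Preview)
Your approach is essentially the paper's, using the same two pointwise ingredients: $\|Sz\|_A^2\ge|\langle Sz,z\rangle_A|^2$ (Cauchy--Schwarz for $\langle\cdot,\cdot\rangle_A$) and the defining lower bounds coming from the Crawford numbers. The only difference is packaging. You select, for each branch, a tailored sequence $(z_n)$ and pass to the limit; the paper instead proves, for \emph{every} $z$ with $\|z\|_A=1$,
\[
|\langle Sz,z\rangle_A|^2+\|Sz\|_A^4\ \ge\ |\langle Sz,z\rangle_A|^2\bigl(1+\|Sz\|_A^2\bigr)\ \ge\ c_A^2(S)\bigl(1+\|Sz\|_A^2\bigr),
\]
and then takes the supremum over $z$, which immediately gives the first bound; the second bound is obtained from the same first inequality by instead bounding $\|Sz\|_A^2$ below via the Crawford number of $S^{\sharp_A}S$. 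Because the inequality is valid for all unit $z$, the ``bookkeeping obstacle'' you flag at the end (different extremal vectors for $c_A$, $\omega_A$, $\|\cdot\|_A$) never arises and no limiting argument is needed.

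One caution, which applies equally to the paper's own proof: in your second branch you pass from $\langle S^{\sharp_A}Sz_n,z_n\rangle_A\ge c_A(S^{\sharp_A}S)$ to the factor $c_A^{\,2}(S^{\sharp_A}S)$, but only the first power follows from the definition. What the chain actually proves is $d\omega_A^2(S)\ge\omega_A^2(S)\bigl(1+c_A(S^{\sharp_A}S)\bigr)$; the squared version in the stated theorem does not follow from this line of reasoning (and indeed fails for $A=I$, $S=2I$).
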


\begin{proof}
Suppose $z\in {\mathcal{H}}$ and $\|z\|_A=1$. Then  
\begin{eqnarray*}
\left\vert \left\langle Sz,z\right\rangle _{A}\right\vert ^{2}+\left\Vert
Sz\right\Vert _{A}^{4} 
&\geq &\left\vert \left\langle Sz,z\right\rangle _{A}\right\vert
^{2}+\left\vert \left\langle Sz,z\right\rangle _{A}\right\vert
^{2}\left\Vert Sz\right\Vert _{A}^{2} 
\geq c_{A}^{2}\left( S\right) \left( 1+\left\Vert Sz\right\Vert
_{A}^{2}\right) \text{.}
\end{eqnarray*}
This implies 
\begin{equation}
d\omega _{A}^{2}\left( S \right) \geq c_{A}^{2}\left( S\right) \left(
1+\left\Vert S\right\Vert _{A}^{2}\right) \text{.}  \label{A1}
\end{equation}
We also have  
\begin{eqnarray*}
\left\vert \left\langle Sz,z\right\rangle _{A}\right\vert ^{2}+\left\Vert
Sz\right\Vert _{A}^{4} 
&\geq &\left\vert \left\langle Sz,z\right\rangle _{A}\right\vert ^{2}\left(
1+\left\Vert Sz\right\Vert _{A}^{2}\right) 
\geq \left\vert \left\langle Sz,z\right\rangle _{A}\right\vert ^{2}\left(
1+c_{A}^{2}\left( S^{\sharp _{A}}S\right) \right) \text{.}
\end{eqnarray*}
From this we get  
\begin{equation}
d\omega _{A}^{2}\left( S\right) \geq \omega _{A}^{2}\left( S\right) \left(
1+c_{A}^{2}\left( S^{\sharp _{A}}S\right) \right) \text{.}  \label{A2}
\end{equation}
Combining (\ref{A1}) and (\ref{A2}), we get  
\begin{equation*}
d\omega _{A}^{2}\left( S\right) \geq \max \left\{ c_{A}^{2}\left( S\right)
\left( 1+\left\Vert S\right\Vert _{A}^{2}\right) ,\omega _{A}^{2}\left(
S\right) \left( 1+c_{A}^{2}\left( S^{\sharp _{A}}S\right) \right) \right\} 
\text{.}
\end{equation*}
This completes the proof.
\end{proof}

We conclude this section by giving another lower bound.

\begin{theorem}
\label{44} Let $S\in {\mathcal{B}}_{A}({\mathcal{H}})$ and  $m_{A}\left( S\right) =\inf\limits_{\left\Vert x\right\Vert
_{A}=1}\left\Vert Sz\right\Vert _{A}$. Then  
\begin{equation*}
d\omega _{A}^{2}\left( S\right) \geq \max \left\{ \left( 1+m_{A}^{2}\left(
S\right) \right) \omega _{A}^{2}\left( S\right) ,\left( 1+\left\Vert
S\right\Vert _{A}^{2}\right) c_{A}^{2}\left( S\right) \right\} \text{.}
\end{equation*}
\end{theorem}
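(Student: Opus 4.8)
The plan is to imitate the proof of the preceding theorem: fix a vector $z\in\mathcal H$ with $\|z\|_A=1$, bound the scalar $|\langle Sz,z\rangle_A|^2+\|Sz\|_A^4$ from below in two separate ways, and then pass to the supremum over all such $z$. The only tool needed is the Cauchy--Schwarz inequality for the semi-inner product $\langle\cdot,\cdot\rangle_A$, which gives $|\langle Sz,z\rangle_A|\le\|Sz\|_A\|z\|_A=\|Sz\|_A$; in particular $|\langle Sz,z\rangle_A|^2\le\|Sz\|_A^2$, and also $c_A(S)\le|\langle Sz,z\rangle_A|\le\|Sz\|_A$, so that $c_A^2(S)\le\|Sz\|_A^2$.

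For the first half I would write $\|Sz\|_A^4=\|Sz\|_A^2\cdot\|Sz\|_A^2\ge|\langle Sz,z\rangle_A|^2\,\|Sz\|_A^2$, whence $|\langle Sz,z\rangle_A|^2+\|Sz\|_A^4\ge|\langle Sz,z\rangle_A|^2(1+\|Sz\|_A^2)\ge|\langle Sz,z\rangle_A|^2(1+m_A^2(S))$, using $\|Sz\|_A\ge m_A(S)$. Taking the supremum over $\|z\|_A=1$ then gives $d\omega_A^2(S)\ge\bigl(1+m_A^2(S)\bigr)\omega_A^2(S)$.

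For the second half I would instead use $|\langle Sz,z\rangle_A|^2\ge c_A^2(S)$ together with $\|Sz\|_A^4=\|Sz\|_A^2\cdot\|Sz\|_A^2\ge c_A^2(S)\,\|Sz\|_A^2$, obtaining $|\langle Sz,z\rangle_A|^2+\|Sz\|_A^4\ge c_A^2(S)\bigl(1+\|Sz\|_A^2\bigr)$; now taking the supremum over $\|z\|_A=1$, and using $\sup_{\|z\|_A=1}\|Sz\|_A^2=\|S\|_A^2$, yields $d\omega_A^2(S)\ge\bigl(1+\|S\|_A^2\bigr)c_A^2(S)$. Combining the two inequalities produces the asserted maximum.

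I do not expect a genuine obstacle here; this is a short Cauchy--Schwarz argument of the same flavour as the previous theorem (and indeed $c_A(S^{\sharp_A}S)=\inf_{\|z\|_A=1}\|Sz\|_A^2=m_A^2(S)$, so the two results are close cousins). The one point to be careful about is the bookkeeping with the supremum: in each estimate the factor $1+\|Sz\|_A^2$ must be kept attached to the same vector $z$, so that in the first case $\|Sz\|_A^2$ gets replaced by the uniform lower bound $m_A^2(S)$ \emph{before} passing to the supremum, while in the second case it is the supremum of $\|Sz\|_A^2$, namely $\|S\|_A^2$, that appears.
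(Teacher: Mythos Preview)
Your proposal is correct and follows essentially the same route as the paper's proof: for each unit vector $z$ you use the Cauchy--Schwarz estimate $|\langle Sz,z\rangle_A|\le\|Sz\|_A$ to factor $|\langle Sz,z\rangle_A|^2+\|Sz\|_A^4\ge|\langle Sz,z\rangle_A|^2(1+\|Sz\|_A^2)$, and then replace the two factors by $m_A^2(S)$ and $c_A^2(S)$ respectively before taking the supremum. The paper does exactly this, with only cosmetic differences in the order of substitutions.
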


\begin{proof}
Take $z\in {\mathcal{H}}$ and $\|z\|_A=1$. We obtain  
\begin{eqnarray*}
\left\vert \left\langle Sz,z\right\rangle _{A}\right\vert ^{2}+\left\Vert
Sz\right\Vert _{A}^{4} 
&\geq &\left\vert \left\langle Sz,z\right\rangle _{A}\right\vert
^{2}+\left\vert \left\langle Sz,z\right\rangle _{A}\right\vert
^{2}m_{A}^{2}\left( S\right) 
=\left( 1+m_{A}^{2}\left( S\right) \right) \left\vert \left\langle
Sz,z\right\rangle _{A}\right\vert ^{2}\text{.}
\end{eqnarray*}
This gives  
\begin{equation}
d\omega _{A}^{2}\left( S\right) \geq \left( 1+m_{A}^{2}\left( S\right)
\right) \omega _{A}^{2}\left( S\right) \text{.}  \label{A}
\end{equation}
Moreover, we see  
\begin{eqnarray*}
\left\vert \left\langle Sz,z\right\rangle _{A}\right\vert ^{2}+\left\Vert
Sz\right\Vert _{A}^{4} 
&\geq&\left( 1+\left\Vert Sz\right\Vert _{A}^{2}\right)\left\vert \left\langle
Sz,z\right\rangle _{A}\right\vert ^{2} 
\geq  \left( 1+\left\Vert Sz\right\Vert _{A}^{2}\right)c_{A}^{2}\left(
S\right).
\end{eqnarray*}
So 
\begin{equation}
d\omega _{A}^{2}\left( S\right) \geq \left( 1+\left\Vert S\right\Vert
_{A}^{2}\right) c_{A}^{2}\left( S\right) \text{.}  \label{B}
\end{equation}
Combining (\ref{A}) and (\ref{B}), we get the result as desired.
\end{proof}

It should be mentioned here that the bound in Theorem \ref{44} is also
studied in \cite{BH02} in the setting of Hilbert space operators.

\section{Bounds for the $A_{0}$-Davis-Wielandt radius of block matrices}

\noindent Consider $2\times 2$ block diagonal
matrix 
$
A_{0}\mathbf{}=\left[ 
\begin{array}{cc}
A & 0 \\ 
0 & A%
\end{array}
\right] \text{.}
$
Clearly, $A_{0}\in {\mathcal{B}}\left( {\mathcal{H\oplus H}}\right) ^{}$ is positive.
Therefore, $A_{0}$ induces 
$
\left\langle x,z\right\rangle _{A_{0}}=\left\langle A_{0}x,z\right\rangle
=\left\langle x_{1},z_{1}\right\rangle _{A}+\left\langle
x_{2},z_{2}\right\rangle _{A}\text{,}
$ for all $ x=\left( x_{1},x_{2}\right) $ and $ z=\left( z_{1},z_{2}\right) $ $\in {\mathcal{H\oplus H}}$. If $S_{ij}\in {\mathcal{B}
}_{A}\left( {\mathcal{H}}\right) $ for $i,j=1,2 $,
then $\left[ 
\begin{array}{cc}
S_{11} & S_{12} \\ 
S_{21} & S_{22}
\end{array}
\right] \in {
\mathcal{B}}_{A_{0}}\left( {\mathcal{H\oplus H}}\right) $ and 
$
\left[ 
\begin{array}{cc}
S_{11} & S_{12} \\ 
S_{21} & S_{22}%
\end{array}
\right] ^{^{\sharp _{A_{0}}}}=\left[ 
\begin{array}{cc}
S_{11}^{^{\sharp _{A}}} & S_{21}^{^{\sharp _{A}}} \\ 
S_{12}^{^{\sharp _{A}}} & S_{22}^{^{\sharp _{A}}}
\end{array}
\right] \text{,}$  shown in \cite{Rout}.
To start our analysis we begin with the lemmas.

\begin{lemma}
\cite{BHA}\label{L1} If $ S\in {\mathcal{B}}_{A}({\mathcal{H}})$ and  $
V$ is $A$-unitary, then  
$
d\omega _{A}\left( V^{^{\sharp _{A}}}SV\right) =d\omega _{A}\left( S\right) 
\text{.}
$
\end{lemma}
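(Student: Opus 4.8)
The plan is to verify the defining supremum for $d\omega_A(V^{\sharp_A}SV)$ agrees with that for $d\omega_A(S)$ by showing the set over which we take the supremum is, in effect, unchanged under the substitution $z \mapsto Vz$. First I would recall that $V$ being $A$-unitary means $\|Vx\|_A = \|V^{\sharp_A}x\|_A = \|x\|_A$ for all $x \in \mathcal{H}$; in particular $V$ maps the $A$-unit sphere $\{z : \|z\|_A = 1\}$ into itself. The key identity I want is $V^{\sharp_A}V = P$ (the orthogonal projection onto $\overline{\mathcal{R}(A)}$), or at least $\langle V^{\sharp_A}Vz, z\rangle_A = \|z\|_A^2$, which follows from polarization of the norm-preservation property: $\langle V^{\sharp_A}Vz, z\rangle_A = \langle Vz, Vz\rangle_A = \|Vz\|_A^2 = \|z\|_A^2$.

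The main computation then goes as follows. Fix $z$ with $\|z\|_A = 1$. Put $w = Vz$, so $\|w\|_A = 1$ as well. Then
\begin{equation*}
\langle (V^{\sharp_A}SV)z, z\rangle_A = \langle SVz, Vz\rangle_A = \langle Sw, w\rangle_A,
\end{equation*}
using the defining property $\langle V^{\sharp_A}y, z\rangle_A = \langle y, Vz\rangle_A$ of the $A$-adjoint. Similarly,
\begin{equation*}
\|(V^{\sharp_A}SV)z\|_A = \|V^{\sharp_A}(SVz)\|_A = \|SVz\|_A = \|Sw\|_A,
\end{equation*}
where the middle equality is precisely the $A$-unitarity of $V$ applied to the vector $SVz$ (this is where we need $\|V^{\sharp_A}x\|_A = \|x\|_A$ for \emph{all} $x$, not merely on the unit sphere). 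Hence $|\langle (V^{\sharp_A}SV)z,z\rangle_A|^2 + \|(V^{\sharp_A}SV)z\|_A^4 = |\langle Sw, w\rangle_A|^2 + \|Sw\|_A^4$.

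Taking the supremum over $\|z\|_A = 1$ on the left, every value attained is a value of $|\langle Sw,w\rangle_A|^2 + \|Sw\|_A^4$ with $\|w\|_A = 1$, so $d\omega_A(V^{\sharp_A}SV) \le d\omega_A(S)$. For the reverse inequality I would apply the same argument to the operator $(V^{\sharp_A})^{\sharp_A} (V^{\sharp_A}SV) V^{\sharp_A}$, or more cleanly observe that since $V$ is $A$-unitary the map $z \mapsto Vz$ is, up to the kernel of $A$, a surjection of the $A$-unit sphere onto itself: given $w$ with $\|w\|_A = 1$, the vector $z = V^{\sharp_A}w$ satisfies $\|z\|_A = \|V^{\sharp_A}w\|_A = \|w\|_A = 1$ and $Vz = VV^{\sharp_A}w$, which has the same $A$-inner products against anything in $\overline{\mathcal{R}(A)}$ as $w$ does (since $VV^{\sharp_A}$ acts as the identity in the relevant sense for $A$-unitaries). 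Thus the two suprema range over the same set of real numbers and are equal.

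The only subtle point—and the step I would be most careful about—is the handling of the seminorm degeneracy: $\|\cdot\|_A$ is only a seminorm, so $Vz$ and $w$ agreeing on all $A$-inner products means they differ by an element of $\mathcal{N}(A)$, and one must check this does not affect $\langle S \cdot, \cdot\rangle_A$ or $\|S\cdot\|_A$. This is fine because $S \in \mathcal{B}_A(\mathcal{H}) \subseteq \mathcal{B}_{A^{1/2}}(\mathcal{H})$ maps $\mathcal{N}(A)$ into $\mathcal{N}(A)$, as recorded in the preliminaries, so both quantities depend only on the class of the vector modulo $\mathcal{N}(A)$. With that observation the surjectivity-of-the-sphere argument is rigorous and the equality $d\omega_A(V^{\sharp_A}SV) = d\omega_A(S)$ follows.
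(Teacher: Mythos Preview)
The paper does not supply its own proof of this lemma; it is simply quoted from \cite{BHA}. So there is no in-paper argument to compare against. Your proof is correct and is essentially the standard one: the substitution $w=Vz$ together with $\|V^{\sharp_A}x\|_A=\|x\|_A$ gives the inequality $d\omega_A(V^{\sharp_A}SV)\le d\omega_A(S)$, and your surjectivity step via $z=V^{\sharp_A}w$ and the identity $\|VV^{\sharp_A}w-w\|_A=0$ (which you correctly justify using $\langle VV^{\sharp_A}w,w\rangle_A=\|V^{\sharp_A}w\|_A^2=\|w\|_A^2$) handles the reverse inequality. Your caution about the seminorm degeneracy is well placed and the resolution you give---that $S\in\mathcal{B}_{A^{1/2}}(\mathcal{H})$ sends $\mathcal{N}(A)$ into itself, so both $\langle S\cdot,\cdot\rangle_A$ and $\|S\cdot\|_A$ depend only on the class modulo $\mathcal{N}(A)$---is exactly what is needed.
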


\begin{lemma}
\cite{Rout}\label{L2} If $S,T\in {\mathcal{B}}_{A}\left( {\mathcal{H}} 
\right) $ then  
$
\omega _{A_{0}}\left( \left[ 
\begin{array}{cc}
S & 0 \\ 
0 & T
\end{array}
\right] \right) =\max \left\{ \omega _{A}\left( S\right) ,\omega _{A}\left(
T\right) \right\} \text{.}
$
\end{lemma}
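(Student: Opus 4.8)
The final statement to prove is Lemma~\ref{L2}: for $S,T\in\mathcal{B}_A(\mathcal{H})$,
$$
\omega_{A_0}\!\left(\begin{bmatrix} S & 0\\ 0 & T\end{bmatrix}\right)=\max\{\omega_A(S),\omega_A(T)\}.
$$

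The plan is to work directly from the definition of the $A_0$-numerical radius on $\mathcal{H}\oplus\mathcal{H}$. First I would note that a general unit vector for $\|\cdot\|_{A_0}$ is $z=(z_1,z_2)$ with $\|z_1\|_A^2+\|z_2\|_A^2=1$, and that for the block-diagonal operator $M=\begin{bmatrix} S&0\\0&T\end{bmatrix}$ we have
$$
\langle Mz,z\rangle_{A_0}=\langle Sz_1,z_1\rangle_A+\langle Tz_2,z_2\rangle_A.
$$
Hence $|\langle Mz,z\rangle_{A_0}|\le |\langle Sz_1,z_1\rangle_A|+|\langle Tz_2,z_2\rangle_A|$. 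Writing $t=\|z_1\|_A^2\in[0,1]$ so $\|z_2\|_A^2=1-t$, and using the definition of $\omega_A$ together with the homogeneity $|\langle Sz_1,z_1\rangle_A|\le \omega_A(S)\|z_1\|_A^2$ (valid since $\omega_A$ is a seminorm on $\mathcal{B}_{A^{1/2}}(\mathcal{H})$, and applying it to $z_1/\|z_1\|_A$ when $z_1\notin\mathcal{N}(A)$; the case $z_1\in\mathcal{N}(A)$ is trivial because then $\langle Sz_1,z_1\rangle_A=0$), I get
$$
|\langle Mz,z\rangle_{A_0}|\le \omega_A(S)\,t+\omega_A(T)\,(1-t)\le \max\{\omega_A(S),\omega_A(T)\}.
$$
Taking the supremum over admissible $z$ gives $\omega_{A_0}(M)\le\max\{\omega_A(S),\omega_A(T)\}$.

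For the reverse inequality, I would exploit vectors supported in a single coordinate. Given any unit vector $x\in\mathcal{H}$ with $\|x\|_A=1$, the vector $(x,0)$ has $\|(x,0)\|_{A_0}=1$ and $\langle M(x,0),(x,0)\rangle_{A_0}=\langle Sx,x\rangle_A$, so $\omega_{A_0}(M)\ge|\langle Sx,x\rangle_A|$; taking the supremum over such $x$ yields $\omega_{A_0}(M)\ge\omega_A(S)$. Symmetrically, using $(0,x)$ gives $\omega_{A_0}(M)\ge\omega_A(T)$. Combining, $\omega_{A_0}(M)\ge\max\{\omega_A(S),\omega_A(T)\}$, and together with the first part this proves the equality.

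The argument is essentially routine; the only point requiring a little care is the upper bound, where one must handle unit vectors $z=(z_1,z_2)$ for which one of the components lies in $\mathcal{N}(A)$ (so $\|z_i\|_A=0$ but $z_i\ne 0$). I expect this to be the main — and only — obstacle, and it is resolved by the observation that $\|z_i\|_A=0$ forces $\langle Sz_i,z_i\rangle_A=0$ (indeed $|\langle Sz_i,z_i\rangle_A|\le\|Sz_i\|_A\|z_i\|_A=0$ by Cauchy--Schwarz for $\langle\cdot,\cdot\rangle_A$), so such a component contributes nothing and the convex-combination estimate still goes through with $t\in[0,1]$. Everything else is direct substitution into the definitions.
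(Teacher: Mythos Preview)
Your proof is correct. The paper does not prove this lemma; it is quoted from \cite{Rout} without argument, so there is nothing to compare against. Your approach---computing $\langle Mz,z\rangle_{A_0}$ for $z=(z_1,z_2)$, bounding each summand by $\omega_A(\cdot)\|z_i\|_A^2$ via normalization (with the Cauchy--Schwarz observation handling the degenerate case $\|z_i\|_A=0$), and then testing vectors $(x,0)$ and $(0,x)$ for the reverse inequality---is exactly the standard one and goes through without difficulty.
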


\begin{lemma}
\cite{Rout}\label{L02} If $S,T\in {\mathcal{B}}_{A}\left( {\mathcal{H}} 
\right) $ then  
$
\omega _{A_{0}}\left( \left[ 
\begin{array}{cc}
S & T \\ 
T & S
\end{array}
\right] \right) =\max \left\{ \omega _{A}\left( S+T\right) ,\omega
_{A}\left( S-T\right) \right\} \text{.}
$
In particular, 
$
\omega _{A_{0}}\left( \left[ 
\begin{array}{cc}
0 & S \\ 
S & 0%
\end{array}
\right] \right) =\omega _{A}\left( S\right) \text{.}
$
\end{lemma}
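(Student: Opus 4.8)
The final statement is Lemma \ref{L02} concerning the $A_0$-numerical radius of a block matrix of the form $\left[\begin{smallmatrix} S & T \\ T & S \end{smallmatrix}\right]$, and it is attributed to \cite{Rout}. Since it is cited rather than proved in the excerpt, the natural task is to supply the short proof one would expect for such a unitary-similarity reduction.

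\medskip

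The plan is to diagonalize the block matrix by conjugating with the $A_0$-unitary
\[
V = \frac{1}{\sqrt{2}}\begin{bmatrix} I & I \\ I & -I \end{bmatrix},
\]
which one first checks is indeed $A_0$-unitary: since $V^{\sharp_{A_0}}$ is obtained by taking the $\sharp_A$-adjoint entrywise and transposing (as recorded just before the lemmas), and each entry $\pm I$ satisfies $I^{\sharp_A}=P$, a direct computation gives $V^{\sharp_{A_0}}V = VV^{\sharp_{A_0}} = \mathrm{diag}(P,P)$, so $\|Vx\|_{A_0} = \|V^{\sharp_{A_0}}x\|_{A_0} = \|x\|_{A_0}$ for all $x \in \mathcal{H}\oplus\mathcal{H}$. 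Then one computes
\[
V^{\sharp_{A_0}}\begin{bmatrix} S & T \\ T & S \end{bmatrix} V = \begin{bmatrix} S+T & 0 \\ 0 & S-T \end{bmatrix}
\]
(up to harmless multiplication by $P$-factors, which do not affect the $A$-numerical radius since everything lives on $\overline{\mathcal{R}(A)}$). Invoking the $A_0$-unitary invariance of the $A_0$-numerical radius — namely $\omega_{A_0}(V^{\sharp_{A_0}} R V) = \omega_{A_0}(R)$, which follows immediately from the definition and $\|Vx\|_{A_0}=\|x\|_{A_0}$ — together with Lemma \ref{L2} applied to the resulting block-diagonal matrix, yields $\omega_{A_0}\!\left(\left[\begin{smallmatrix} S & T \\ T & S \end{smallmatrix}\right]\right) = \max\{\omega_A(S+T),\omega_A(S-T)\}$. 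The ``in particular'' statement is the special case $S=0$, where $\omega_A(T)=\omega_A(-T)$.

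\medskip

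The one point requiring care — and the main obstacle — is the bookkeeping with the projection $P = P_{\overline{\mathcal{R}(A)}}$: because $I^{\sharp_A}=P$ rather than $I$, the conjugation $V^{\sharp_{A_0}} R V$ produces entries that are sandwiched by $P$'s, and one must argue that $\omega_A(PXP)=\omega_A(X)$ for $X\in\mathcal{B}_A(\mathcal{H})$ (equivalently, restrict attention to unit $A$-norm vectors in $\overline{\mathcal{R}(A)}$, on which $P$ acts as the identity). This is routine but must be stated. Alternatively, and perhaps more cleanly, one avoids the explicit unitary entirely: one parametrizes a unit vector $x=(x_1,x_2)\in\mathcal{H}\oplus\mathcal{H}$ with $\|x_1\|_A^2+\|x_2\|_A^2=1$, writes out $\langle R x, x\rangle_{A_0} = \langle (S+T)\cdot, \cdot\rangle$-type terms, and uses the substitution $u=(x_1+x_2)/\sqrt2$, $v=(x_1-x_2)/\sqrt2$ to reduce the supremum to $\max\{\omega_A(S+T),\omega_A(S-T)\}$ directly. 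Either route is short; I would present the unitary-conjugation argument since it parallels Lemma \ref{L1} and Lemma \ref{L2} already in the text.
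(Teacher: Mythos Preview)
The paper does not prove this lemma; it is merely cited from \cite{Rout}. Your argument is correct and is precisely the template the paper itself uses in its proof of the analogous Lemma~\ref{L3} for $d\omega_{A_0}$: conjugate by the $A_0$-unitary $\frac{1}{\sqrt{2}}\left[\begin{smallmatrix} I & I \\ -I & I\end{smallmatrix}\right]$, note that the resulting diagonal entries pick up a factor of $P$ which does not affect $\omega_A$, and then invoke Lemma~\ref{L2}.
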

We now prove our results.

\begin{prop}
\label{PR} If $S,T\in {\mathcal{B}}_{A}\left( {\mathcal{H}}\right) $ then 
$
d\omega _{A_{0}}\left( \left[ 
\begin{array}{cc}
S & 0 \\ 
0 & T
\end{array}
\right] \right) =\max \left\{ d\omega _{A}\left( S\right) ,d\omega
_{A}\left( T\right) \right\} \text{.}
$
\end{prop}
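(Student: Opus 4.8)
The plan is to compute the $A_0$-Davis-Wielandt radius of the block diagonal operator directly from the definition, exploiting the orthogonal-type decomposition of the $A_0$-seminorm coming from $\langle x,z\rangle_{A_0}=\langle x_1,z_1\rangle_A+\langle x_2,z_2\rangle_A$. Write $D=\left[\begin{smallmatrix}S&0\\0&T\end{smallmatrix}\right]$ and take $z=(z_1,z_2)\in\mathcal H\oplus\mathcal H$ with $\|z\|_{A_0}^2=\|z_1\|_A^2+\|z_2\|_A^2=1$. Then $Dz=(Sz_1,Tz_2)$, so $\langle Dz,z\rangle_{A_0}=\langle Sz_1,z_1\rangle_A+\langle Tz_2,z_2\rangle_A$ and $\|Dz\|_{A_0}^2=\|Sz_1\|_A^2+\|Tz_2\|_A^2$. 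The quantity to be maximized is therefore
\begin{equation*}
\left|\langle Sz_1,z_1\rangle_A+\langle Tz_2,z_2\rangle_A\right|^2+\left(\|Sz_1\|_A^2+\|Tz_2\|_A^2\right)^2.
\end{equation*}

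First I would prove the ``$\le$'' inequality. Setting $t=\|z_1\|_A^2$ so that $\|z_2\|_A^2=1-t$ with $t\in[0,1]$, and assuming $z_1,z_2\notin\mathcal N(A)$ (the boundary cases $t=0,1$ are handled directly and give exactly $d\omega_A(T)$ or $d\omega_A(S)$), write $u=z_1/\|z_1\|_A$, $v=z_2/\|z_2\|_A$, which are $A$-unit vectors. Then $\langle Sz_1,z_1\rangle_A=t\langle Su,u\rangle_A$ and $\|Sz_1\|_A^2=t\|Su\|_A^2$, similarly for $T$. Using the triangle inequality for $|\cdot|$ and then $|\langle Su,u\rangle_A|^2\le d\omega_A^2(S)-\|Su\|_A^4$ etc., one bounds the expression by a convex-combination estimate: the key elementary inequality is that for nonnegative reals, $\left(t a_1+(1-t)a_2\right)^2+\left(tb_1+(1-t)b_2\right)^2\le \max\{a_1^2+b_1^2,\ a_2^2+b_2^2\}$ whenever... — actually this needs care, so instead I would argue more robustly: bound $|\langle Dz,z\rangle_{A_0}|\le t|\langle Su,u\rangle_A|+(1-t)|\langle Tv,v\rangle_A|$, and use convexity of $x\mapsto x^2$ together with the fact that $\|Dz\|_{A_0}^4=\big(t\|Su\|_A^2+(1-t)\|Tv\|_A^2\big)^2$; combining via the Cauchy-Schwarz/Jensen step $(\sum \lambda_i c_i)^2\le \sum\lambda_i c_i^2$ applied to the $2$-vectors $\big(|\langle Su,u\rangle_A|,\|Su\|_A^2\big)$ and $\big(|\langle Tv,v\rangle_A|,\|Tv\|_A^2\big)$ with weights $t,1-t$ gives
\begin{equation*}
|\langle Dz,z\rangle_{A_0}|^2+\|Dz\|_{A_0}^4\le t\big(|\langle Su,u\rangle_A|^2+\|Su\|_A^4\big)+(1-t)\big(|\langle Tv,v\rangle_A|^2+\|Tv\|_A^4\big)\le\max\{d\omega_A^2(S),d\omega_A^2(T)\}.
\end{equation*}
Taking the supremum over $z$ yields $d\omega_{A_0}(D)\le\max\{d\omega_A(S),d\omega_A(T)\}$.

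For the reverse inequality ``$\ge$'', I would simply test on vectors of the form $z=(z_1,0)$ with $\|z_1\|_A=1$: then $|\langle Dz,z\rangle_{A_0}|^2+\|Dz\|_{A_0}^4=|\langle Sz_1,z_1\rangle_A|^2+\|Sz_1\|_A^4$, and taking the supremum over such $z_1$ gives $d\omega_{A_0}(D)\ge d\omega_A(S)$; symmetrically $z=(0,z_2)$ gives $d\omega_{A_0}(D)\ge d\omega_A(T)$. Hence $d\omega_{A_0}(D)\ge\max\{d\omega_A(S),d\omega_A(T)\}$, and combined with the previous paragraph this proves equality. The only delicate point I anticipate is justifying the convexity/Jensen step cleanly — i.e. that the ``$+\|Dz\|^4$'' term, which mixes the two blocks inside a square of a sum rather than a sum of squares, is still controlled by the convex combination; this is exactly where the inequality $(t c_1+(1-t)c_2)^2\le t c_1^2+(1-t)c_2^2$ applied to $c_i=\|\cdot\|_A^2$ does the work, and I would state it as a one-line lemma or invoke convexity of $x\mapsto x^2$ directly.
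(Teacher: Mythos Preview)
Your proof is correct and follows essentially the same route as the paper's. The lower bound via test vectors $(z_1,0)$ and $(0,z_2)$ is identical, and for the upper bound the paper likewise normalizes each component and then combines, using the triangle inequality for the Euclidean norm on the pairs $\big(\langle Tz,z\rangle_A,\|Tz\|_A^2\big)$ and $\big(\langle Sy,y\rangle_A,\|Sy\|_A^2\big)$ in place of your Jensen/convexity step --- your worry about that step is unfounded, since $(t c_1+(1-t)c_2)^2\le t c_1^2+(1-t)c_2^2$ is exactly convexity of $x\mapsto x^2$ and applies separately to the modulus term and to $\|Dz\|_{A_0}^4=(t\|Su\|_A^2+(1-t)\|Tv\|_A^2)^2$.
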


\begin{proof}
Let $X=\left[  
\begin{array}{cc}
T & 0 \\ 
0 & S
\end{array}
\right] $. Suppose $u=\left( z,0\right) \in {\mathcal{H\oplus H}}$, $
\left\Vert z\right\Vert _{A}=1$. Clearly $\left\Vert u\right\Vert _{A_{0}}=1$. Then  $
\left\vert \left\langle Tz,z\right\rangle _{A_{}}\right\vert
^{2}+\left\Vert Tz\right\Vert _{A_{}}^{4}=\left\vert \left\langle
Xu,u\right\rangle _{A_{0}}\right\vert ^{2}+\left\Vert Xu\right\Vert
_{A_{0}}^{4}\leq d\omega _{A_{0}}^{2}\left( X\right) \text{.}
$
Hence, we obtain 
$
d\omega _{A}^{2}\left( T\right) \leq d\omega _{A_{0}}^{2}\left( X\right) 
\text{.}
$
Similarly, we can see that  
$
d\omega _{A}^{2}\left( S\right) \leq d\omega _{A_{0}}^{2}\left( X\right) 
\text{.}
$
Therefore, we infer that  
$
\max \left\{ d\omega _{A}\left( T\right) ,d\omega _{A}\left( S\right)
\right\} \leq d\omega _{A_{0}}\left( X\right) \text{.}
$ 

Now, let $v=\left( z,y\right) \in {\mathcal{H\oplus H}}$ and $
\left\Vert v\right\Vert _{A_{0}}=1$, then  
\begin{eqnarray*}
\sqrt{\left\vert \left\langle Xv,v\right\rangle _{A_{0}}\right\vert
^{2}+\left\Vert Xv\right\Vert _{A_{0}}^{4}} 
&=&\sqrt{\left\vert \left\langle Tz,z\right\rangle _{A}+\left\langle
Sy,y\right\rangle _{A}\right\vert ^{2}+\left\Vert Tz\right\Vert
_{A}^{4}+\left\Vert Sy\right\Vert _{A}^{4}} \\
&\leq &\sqrt{\left\vert \left\langle Tz,z\right\rangle _{A}\right\vert
^{2}+\left\Vert Tz\right\Vert _{A}^{4}}+\sqrt{\left\vert \left\langle
Sy,y\right\rangle _{A}\right\vert ^{2}+\left\Vert Sy\right\Vert _{A}^{4}} \\
&\leq &d\omega _{A}\left( T\right) \left\Vert z\right\Vert _{A}^{2}+d\omega
_{A}\left( S\right) \left\Vert y\right\Vert _{A}^{2} \\
&\leq &\max \left\{ d\omega _{A}\left( T\right) ,d\omega _{A}\left( S\right)
\right\} \text{.}
\end{eqnarray*}
This gives
$
d\omega _{A_{0}}\left( X\right) \leq \max \left\{ d\omega _{A}\left(
T\right) ,d\omega _{A}\left( S\right) \right\} \text{.}
$
\end{proof}

We now discuss the following equality.

\begin{lemma}
\label{L3} If $S,T\in {\mathcal{B}}_{A}\left( {\mathcal{H}}\right) $ then  
$$
d\omega _{A_{0}}\left( \left[ 
\begin{array}{cc}
S & T \\ 
T & S
\end{array}
\right] \right) =d\omega _{A_{0}}\left( \left[ 
\begin{array}{cc}
S-T & 0 \\ 
0 & S+T
\end{array}
\right] \right) \text{ .}
$$
In particular, 
\begin{equation}  \label{L4}
d\omega _{A_{0}}\left( \left[ 
\begin{array}{cc}
0 & S \\ 
S & 0%
\end{array}
\right] \right) =d\omega _{A}\left( S\right) \text{ .}
\end{equation}
\end{lemma}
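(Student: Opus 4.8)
The statement asserts that conjugation by an $A_0$-unitary leaves the $A_0$-Davis-Wielandt radius invariant (Lemma~\ref{L1}), combined with a similarity between the ``block Toeplitz'' matrix $\left[\begin{smallmatrix} S & T \\ T & S \end{smallmatrix}\right]$ and the block diagonal matrix $\left[\begin{smallmatrix} S-T & 0 \\ 0 & S+T \end{smallmatrix}\right]$. So the plan is to exhibit an explicit $A_0$-unitary $V$ on $\mathcal{H}\oplus\mathcal{H}$ that implements exactly this similarity, and then invoke Lemma~\ref{L1}. The natural candidate is the (scaled) ``Hadamard'' block matrix $V=\tfrac{1}{\sqrt 2}\left[\begin{smallmatrix} I & I \\ I & -I \end{smallmatrix}\right]$, which over an ordinary Hilbert space diagonalizes $\left[\begin{smallmatrix} S & T \\ T & S \end{smallmatrix}\right]$ into $\left[\begin{smallmatrix} S+T & 0 \\ 0 & S-T \end{smallmatrix}\right]$. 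One should be slightly careful about which order ($S+T$ versus $S-T$) lands in which diagonal slot, but since the diagonal matrices $\left[\begin{smallmatrix} S-T & 0 \\ 0 & S+T \end{smallmatrix}\right]$ and $\left[\begin{smallmatrix} S+T & 0 \\ 0 & S-T \end{smallmatrix}\right]$ have the same $d\omega_{A_0}$ by Proposition~\ref{PR} (the max is symmetric), either choice of $V$ works.

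\textbf{Key steps, in order.} First I would verify that $V=\tfrac{1}{\sqrt 2}\left[\begin{smallmatrix} I & I \\ I & -I \end{smallmatrix}\right]$ belongs to $\mathcal{B}_{A_0}(\mathcal{H}\oplus\mathcal{H})$ and is $A_0$-unitary: using the block formula for $\sharp_{A_0}$ recalled in the section's preamble, $V^{\sharp_{A_0}}=\tfrac{1}{\sqrt 2}\left[\begin{smallmatrix} I^{\sharp_A} & I^{\sharp_A} \\ I^{\sharp_A} & -I^{\sharp_A}\end{smallmatrix}\right]$; since $I^{\sharp_A}=P$ (the orthogonal projection onto $\overline{\mathcal{R}(A)}$), a short computation shows $\|Vu\|_{A_0}=\|u\|_{A_0}$ and $\|V^{\sharp_{A_0}}u\|_{A_0}=\|u\|_{A_0}$ for all $u$, i.e.\ $V$ is $A_0$-unitary in the sense defined in the introduction. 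Second, I would compute the product $V^{\sharp_{A_0}}\left[\begin{smallmatrix} S & T \\ T & S \end{smallmatrix}\right]V$ directly as a $2\times 2$ block-matrix multiplication and check that it equals $\left[\begin{smallmatrix} S+T & 0 \\ 0 & S-T \end{smallmatrix}\right]$ (again, modulo a harmless reflection of the two diagonal blocks, coming from the presence of $P$ in $V^{\sharp_{A_0}}$ versus $V$ itself). Third, apply Lemma~\ref{L1} to conclude $d\omega_{A_0}\!\left(\left[\begin{smallmatrix} S & T \\ T & S \end{smallmatrix}\right]\right)=d\omega_{A_0}\!\left(\left[\begin{smallmatrix} S+T & 0 \\ 0 & S-T \end{smallmatrix}\right]\right)$, and then use Proposition~\ref{PR} to rewrite the right side as $\max\{d\omega_A(S+T),d\omega_A(S-T)\}=d\omega_{A_0}\!\left(\left[\begin{smallmatrix} S-T & 0 \\ 0 & S+T \end{smallmatrix}\right]\right)$, which is the claimed identity. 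For the ``in particular'' case, set $T=S$: then $S-T=0$ and $S+T=2S$ give $d\omega_{A_0}\!\left(\left[\begin{smallmatrix} 0 & S \\ S & 0 \end{smallmatrix}\right]\right)=\max\{d\omega_A(0),d\omega_A(2S)\}$—wait, this needs care, since with $T=S$ the off-diagonal matrix is $\left[\begin{smallmatrix} 0 & S \\ S & 0 \end{smallmatrix}\right]$ only after replacing $S\mapsto 0$; more precisely one applies the general identity with the roles set so that $\left[\begin{smallmatrix} 0 & S \\ S & 0 \end{smallmatrix}\right]$ is conjugate to $\left[\begin{smallmatrix} S & 0 \\ 0 & -S \end{smallmatrix}\right]$, and then $d\omega_A(-S)=d\omega_A(S)$ (which follows since replacing $S$ by $-S$ does not change $|\langle Sz,z\rangle_A|$ or $\|Sz\|_A$) yields \eqref{L4}.

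\textbf{Main obstacle.} The only genuinely delicate point is the bookkeeping caused by the orthogonal projection $P=P_{\overline{\mathcal{R}(A)}}$ appearing in the $A_0$-adjoint: one does not literally have $V^{\sharp_{A_0}}V=I_{\mathcal{H}\oplus\mathcal{H}}$ but only $V^{\sharp_{A_0}}V=P_0:=\mathrm{diag}(P,P)$, so the similarity $V^{\sharp_{A_0}}MV$ is only a ``similarity up to compression by $P_0$.'' I expect this to be harmless because $d\omega_{A_0}$ and the $A_0$-seminorm only see the action on $\overline{\mathcal{R}(A_0)}=\overline{\mathcal{R}(A)}\oplus\overline{\mathcal{R}(A)}$, on which $P_0$ is the identity, and because every $S_{ij}\in\mathcal{B}_A(\mathcal{H})$ maps $\mathcal{N}(A)$ into $\mathcal{N}(A)$; nonetheless making this rigorous (rather than just formally multiplying the blocks and cancelling $P$'s) is where the actual work lies. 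An alternative route that sidesteps $A_0$-unitaries entirely is to argue directly from the definition of $d\omega_{A_0}$: parametrize a unit vector $u=(z,y)$ with $\|z\|_A^2+\|y\|_A^2=1$, substitute $z'=\tfrac{1}{\sqrt 2}(z+y)$, $y'=\tfrac{1}{\sqrt 2}(z-y)$, and check that $|\langle Mu,u\rangle_{A_0}|^2+\|Mu\|_{A_0}^4$ for $M=\left[\begin{smallmatrix} S & T \\ T & S \end{smallmatrix}\right]$ equals the same quantity for $\left[\begin{smallmatrix} S+T & 0 \\ 0 & S-T \end{smallmatrix}\right]$ evaluated at $(z',y')$, the map $(z,y)\mapsto(z',y')$ being a bijection of the $A_0$-unit sphere; taking suprema then gives the equality without invoking Lemma~\ref{L1} at all. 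I would present the $A_0$-unitary argument as the main proof and mention the direct substitution as the fallback if the projection bookkeeping becomes unwieldy.
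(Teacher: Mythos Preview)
Your proposal is correct and follows essentially the same route as the paper: the paper uses the $A_0$-unitary $U=\tfrac{1}{\sqrt 2}\left[\begin{smallmatrix} I & I \\ -I & I \end{smallmatrix}\right]$ (a harmless variant of your $V$), computes $U^{\sharp_{A_0}}WU=\left[\begin{smallmatrix} P(S-T) & 0 \\ 0 & P(S+T) \end{smallmatrix}\right]$, and then disposes of the projection $P$ exactly as you anticipate, by observing that Lemma~\ref{L1} with $U=I$ gives $d\omega_A(PS)=d\omega_A(S)$. The particular case is handled just as in your corrected version, via $d\omega_A(-S)=d\omega_A(S)$ and Proposition~\ref{PR}.
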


\begin{proof}
Let $U\mathbf{=}\frac{1}{\sqrt{2}}\left[  
\begin{array}{cc}
I & I \\ 
-I & I%
\end{array}
\right] $ and $W=\left[  
\begin{array}{cc}
S & T \\ 
T & S
\end{array}
\right] $. Then  
$\left[ 
\begin{array}{cc}
P\left( S-T\right) & 0 \\ 
0 & P\left( S+T\right)%
\end{array}
\right]=U^{\sharp _{A_{0}}}WU \text{.}$ 
 Using Lemma \ref{L1}, we get $d\omega _{A_{0}}\left( U^{\sharp{{A_0} }
}WU\right) =d\omega _{A_{0}}\left( W\right) $. So,  
$
d\omega _{A_{0}}\left( \left[ 
\begin{array}{cc}
S & T \\ 
T & S
\end{array}
\right] \right) =d\omega _{A_{0}}\left( \left[ 
\begin{array}{cc}
P\left( S-T\right) & 0 \\ 
0 & P\left( S+T\right)%
\end{array}
\right] \right) \text{.}
$
Taking $U=I$ in Lemma \ref{L1}, we can observe that $d\omega _{A}\left( 
PS\right) =d\omega _{A}\left( S\right) $. So,  
\begin{equation*}
d\omega _{A_{0}}\left( \left[ 
\begin{array}{cc}
S & T \\ 
T & S
\end{array}
\right] \right) =d\omega _{A_{0}}\left( \left[ 
\begin{array}{cc}
P\left( S-T\right) & 0 \\ 
0 & P\left( S+T\right)%
\end{array}
\right] \right) =d\omega _{A_{0}}\left( \left[ 
\begin{array}{cc}
S-T & 0 \\ 
0 & S+T
\end{array}
\right] \right) \text{.}
\end{equation*}
In particular, we get 
\begin{eqnarray*}
d\omega _{A_{0}}\left( \left[ 
\begin{array}{cc}
0 & S \\ 
S & 0%
\end{array}
\right] \right) &=& d\omega _{A_{0}}\left( \left[ 
\begin{array}{cc}
-S & 0 \\ 
0 & S%
\end{array}
\right] \right) \text{ } 
= \max \left\{ d\omega _{A}\left( -S\right) ,d\omega _{A}\left( S\right)
\right\} 
= d\omega _{A}\left( S\right) \text{.}
\end{eqnarray*}
\end{proof}

We now develop a bound of $A_{0}$-Davis--Wielandt radius for $\left[ 
\begin{array}{cc}
0 & B \\ 
C & 0%
\end{array}%
\right] $  in $ {\mathcal{B}}_{A}({\mathcal{H\oplus H}})$.

\begin{theorem}
\label{TT} If $C,B\in {\mathcal{B}}_{A}({\mathcal{H}})$ then  
\begin{eqnarray*}
&&d\omega _{A_{0}}^{2}\left( \left[ 
\begin{array}{cc}
0 & B \\ 
C & 0%
\end{array}
\right] \right) \\
&\leq &\frac{1}{4}\max \left\{ \omega _{A}\left( BB^{\sharp _{A}}+C^{\sharp
_{A}}C+4\left( C^{\sharp _{A}}C\right) ^{2}\right) ,\omega _{A}\left(
B^{\sharp _{A}}B+CC^{\sharp _{A}}+4\left( B^{\sharp _{A}}B\right)
^{2}\right) \right\} \\
&&+\frac{1}{2}\max \left\{ \omega _{A}\left( BC\right) ,\omega _{A}\left(
CB\right) \right\} \text{.}
\end{eqnarray*}
\end{theorem}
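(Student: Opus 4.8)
The plan is to apply the corollary that produces the bound \eqref{eq20}, namely $d\omega_{A}^{2}(S)\leq \tfrac14\|S^{\sharp_{A}}S+SS^{\sharp_{A}}+4(S^{\sharp_{A}}S)^{2}\|_{A}+\tfrac12\omega_{A}(S^{2})$, to the off-diagonal operator $W=\left[\begin{smallmatrix}0 & B\\ C & 0\end{smallmatrix}\right]$, with the role of $A$ now played by $A_{0}$. Since $B,C\in{\mathcal{B}}_{A}({\mathcal{H}})$ (and $0\in{\mathcal{B}}_{A}({\mathcal{H}})$), the block description recalled in the preliminaries gives $W\in{\mathcal{B}}_{A_{0}}({\mathcal{H}}\oplus{\mathcal{H}})$, so \eqref{eq20} is applicable and the problem reduces to identifying the four block operators $W^{\sharp_{A_{0}}}W$, $WW^{\sharp_{A_{0}}}$, $(W^{\sharp_{A_{0}}}W)^{2}$ and $W^{2}$, and then reading off the relevant $\omega_{A_{0}}$'s.

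Using the formula for the $A_{0}$-adjoint of a $2\times2$ block matrix from \cite{Rout}, $W^{\sharp_{A_{0}}}=\left[\begin{smallmatrix}0 & C^{\sharp_{A}}\\ B^{\sharp_{A}} & 0\end{smallmatrix}\right]$, and a direct multiplication yields the block-diagonal forms $W^{\sharp_{A_{0}}}W=\mathrm{diag}(C^{\sharp_{A}}C,\,B^{\sharp_{A}}B)$, $WW^{\sharp_{A_{0}}}=\mathrm{diag}(BB^{\sharp_{A}},\,CC^{\sharp_{A}})$, $(W^{\sharp_{A_{0}}}W)^{2}=\mathrm{diag}((C^{\sharp_{A}}C)^{2},\,(B^{\sharp_{A}}B)^{2})$ and $W^{2}=\mathrm{diag}(BC,\,CB)$. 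Consequently $W^{\sharp_{A_{0}}}W+WW^{\sharp_{A_{0}}}+4(W^{\sharp_{A_{0}}}W)^{2}$ is the block-diagonal operator with diagonal entries $X:=BB^{\sharp_{A}}+C^{\sharp_{A}}C+4(C^{\sharp_{A}}C)^{2}$ and $Y:=B^{\sharp_{A}}B+CC^{\sharp_{A}}+4(B^{\sharp_{A}}B)^{2}$.

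Next I would observe that $X$ and $Y$ are $A$-selfadjoint: each is a sum of operators of the form $R^{\sharp_{A}}R$, $RR^{\sharp_{A}}$, or $(R^{\sharp_{A}}R)^{2}$, and all of these are $A$-selfadjoint (the first two by property (2) of the preliminaries, and the square of an $A$-selfadjoint operator is again $A$-selfadjoint). Hence $W^{\sharp_{A_{0}}}W+WW^{\sharp_{A_{0}}}+4(W^{\sharp_{A_{0}}}W)^{2}$ is $A_{0}$-selfadjoint, so its $A_{0}$-operator seminorm equals its $A_{0}$-numerical radius, which by Lemma \ref{L2} equals $\max\{\omega_{A}(X),\omega_{A}(Y)\}$. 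Likewise, Lemma \ref{L2} gives $\omega_{A_{0}}(W^{2})=\max\{\omega_{A}(BC),\omega_{A}(CB)\}$. Substituting these two identities into \eqref{eq20} applied to $W$ yields exactly the asserted inequality.

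All of the algebra here is routine matrix multiplication; the only step that needs a short argument is the reduction of $\|W^{\sharp_{A_{0}}}W+WW^{\sharp_{A_{0}}}+4(W^{\sharp_{A_{0}}}W)^{2}\|_{A_{0}}$ to an $A_{0}$-numerical radius, i.e. checking the $A_{0}$-selfadjointness of that operator so that Lemma \ref{L2} (which is phrased for $\omega_{A_{0}}$) applies. One could instead note directly that the $A_{0}$-seminorm of a block-diagonal operator is the maximum of the $A$-seminorms of its diagonal blocks, but the $A_{0}$-selfadjoint route is what keeps the final bound expressed through $\omega_{A}$ as stated.
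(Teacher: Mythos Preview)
Your proposal is correct and follows essentially the same route as the paper. The only cosmetic difference is that the paper redoes the chain of inequalities (Lemma~\ref{KZ}, AM--GM, Lemma~\ref{L1.2}) inline for the block operator and bounds the resulting inner product directly by $\omega_{A_{0}}$, whereas you invoke the corollary \eqref{eq20} (stated with $\|\cdot\|_{A_{0}}$) and then use $A_{0}$-selfadjointness to pass to $\omega_{A_{0}}$ before applying Lemma~\ref{L2}; the block computations and the final identification via Lemma~\ref{L2} are identical.
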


\begin{proof}
Let $S=\left[  
\begin{array}{cc}
0 & B \\ 
C & 0%
\end{array}
\right] $ and $z\in {\mathcal{H\oplus H}}$ with $\left\Vert z\right\Vert 
_{A_{0}}=1$. Then  
\begin{eqnarray*}
&&\left\vert \left\langle Sz,z\right\rangle _{A_{0}}\right\vert
^{2}+\left\Vert Sz\right\Vert _{A_{0}}^{4} \\
&\leq &\frac{1}{2}\left( \left\Vert Sz\right\Vert _{A_{0}}\left\Vert
S^{\sharp _{A}}z\right\Vert _{A_{0}}+\left\vert \left\langle Sz,S^{\sharp
_{A}}z\right\rangle _{A_{0}}\right\vert \right) +\left\Vert Sz\right\Vert
_{A_{0}}^{4} \quad \text{(by Lemma \ref{KZ})} \\
&\leq &\frac{1}{4}\left( \left\Vert Sz\right\Vert _{A_{0}}^{2}+\left\Vert
S^{\sharp _{A}}z\right\Vert _{A_{0}}^{2}\right) +\frac{1}{2}\left\vert
\left\langle S^{2}z,z\right\rangle _{A_{0}}\right\vert +\left\Vert
Sz\right\Vert _{A_{0}}^{4} \quad \text{(by AM-GM inequality)} \\
&\leq &\frac{1}{4}\left\langle \left( S^{\sharp _{A}}S+SS^{\sharp
_{A}}\right) z,z\right\rangle _{A_{0}}+\left\langle \left( S^{\sharp
_{A}}S\right) ^{2}z,z\right\rangle _{A_{0}}+\frac{1}{2}\left\vert
\left\langle S^{2}z,z\right\rangle _{A_{0}}\right\vert \quad \text{(by Lemma \ref{L1.2})} \\
&=&\frac{1}{4}\left\langle \left( S^{\sharp _{A}}S+SS^{\sharp _{A}}+4\left(
S^{\sharp _{A}}S\right) ^{2}\right) z,z\right\rangle _{A_{0}}+\frac{1}{2}
\left\vert \left\langle S^{2}z,z\right\rangle _{A_{0}}\right\vert \\
&\leq &\frac{1}{4}\omega _{A_{0}}\left( S^{\sharp _{A}}S+SS^{\sharp
_{A}}+4\left( S^{\sharp _{A}}S\right) ^{2}\right) +\frac{1}{2}\omega
_{A_{0}}\left( S^{2}\right) \\
&=&\frac{1}{4}\omega _{A_{0}}\left( \left[ 
\begin{array}{cc}
BB^{\sharp _{A}}+C^{\sharp _{A}}C+4\left( C^{\sharp _{A}}C\right) ^{2} & 0
\\ 
0 & B^{\sharp _{A}}B+CC^{\sharp _{A}}+4\left( B^{\sharp _{A}}B\right) ^{2}%
\end{array}
\right] \right) \\
&&+\frac{1}{2}\omega _{A_{0}}\left( \left[ 
\begin{array}{cc}
BC & 0 \\ 
0 & CB%
\end{array}
\right] \right) \\
&=&\frac{1}{4}\max \left\{ \omega _{A}\left( BB^{\sharp _{A}}+C^{\sharp
_{A}}C+4\left( C^{\sharp _{A}}C\right) ^{2}\right) ,\omega _{A}\left(
B^{\sharp _{A}}B+CC^{\sharp _{A}}+4\left( B^{\sharp _{A}}B\right)
^{2}\right) \right\} \\
&&+\frac{1}{2}\max \left\{ \omega _{A}\left( BC\right) ,\omega _{A}\left(
CB\right) \right\} \text{} \quad \text{(by Lemma \ref{L2})}.
\end{eqnarray*}
This implies the bound as desired.  

\end{proof}



We now need the lemma.

\begin{lemma}
\cite{KF_HJM}  \label{L} If $x,u,v\in {\mathcal{H}}$ then  
\begin{equation*}
\left\vert
\left\langle x,v\right\rangle _{A}\right\vert ^{2} + \left\vert \left\langle x,u\right\rangle _{A}\right\vert ^{2} \leq   \left\Vert
x\right\Vert _{A}^{2} \left( \max \left\{ \left\Vert v\right\Vert
_{A}^{2},\left\Vert u\right\Vert _{A}^{2}\right\} +\left\vert \left\langle
v,u\right\rangle _{A}\right\vert \right)  \text{.}
\end{equation*}
\end{lemma}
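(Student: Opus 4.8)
The plan is to deduce the inequality from the $A$-Cauchy--Schwarz inequality by testing $x$ against a cleverly chosen auxiliary vector. Write $a=\langle x,v\rangle_A$ and $b=\langle x,u\rangle_A$, and set $y=av+bu\in\mathcal H$. Using conjugate-linearity of $\langle\cdot,\cdot\rangle_A$ in its second argument, one computes
$\langle x,y\rangle_A=\overline a\langle x,v\rangle_A+\overline b\langle x,u\rangle_A=|a|^2+|b|^2$,
which is precisely the left-hand side of the asserted inequality; call this nonnegative real number $T$. Since $\langle\cdot,\cdot\rangle_A$ is a positive semidefinite sesquilinear form, the Cauchy--Schwarz inequality holds for it, so $T=\langle x,y\rangle_A\le\|x\|_A\|y\|_A$, hence $T^2\le\|x\|_A^2\|y\|_A^2$.

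Next I would estimate $\|y\|_A^2$ directly. Expanding,
$\|y\|_A^2=|a|^2\|v\|_A^2+|b|^2\|u\|_A^2+2\Re\big(a\overline b\langle v,u\rangle_A\big)\le\max\{\|v\|_A^2,\|u\|_A^2\}\,(|a|^2+|b|^2)+2|a||b|\,|\langle v,u\rangle_A|$.
Since $2|a||b|\le|a|^2+|b|^2=T$, this gives
$\|y\|_A^2\le T\big(\max\{\|v\|_A^2,\|u\|_A^2\}+|\langle v,u\rangle_A|\big)$.

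Combining the two estimates yields
$T^2\le\|x\|_A^2\,T\big(\max\{\|v\|_A^2,\|u\|_A^2\}+|\langle v,u\rangle_A|\big)$.
If $T=0$ the desired inequality is trivial (its left side is $0$); otherwise divide both sides by $T>0$ to obtain exactly the claimed bound. The case $\|x\|_A=0$ forces $T=0$ by Cauchy--Schwarz, so it is covered as well.

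The only real subtlety, and where I would be most careful, is the first step: the coefficients of the auxiliary vector must be taken as $a$ and $b$ (not $\overline a,\overline b$) so that the conjugate-linearity of $\langle\cdot,\cdot\rangle_A$ collapses $\langle x,y\rangle_A$ to $|a|^2+|b|^2$ rather than $a^2+b^2$. Beyond that, the argument is the standard ``self-referential Cauchy--Schwarz'' device together with $2|a||b|\le|a|^2+|b|^2$; it uses nothing about $A$ except positivity of $\langle\cdot,\cdot\rangle_A$, so it goes through verbatim in the semi-Hilbertian setting despite possible degeneracy of $\|\cdot\|_A$.
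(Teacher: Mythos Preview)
Your argument is correct. The paper does not give its own proof of this lemma; it merely cites it from \cite{KF_HJM}, so there is nothing in the paper to compare against. For what it is worth, your ``self-referential Cauchy--Schwarz'' device with $y=av+bu$ is exactly the natural approach to this type of inequality, and every step is clean: the computation $\langle x,y\rangle_A=|a|^2+|b|^2$, the bound on $\|y\|_A^2$ via $\max$ and $2|a||b|\le|a|^2+|b|^2$, and the cancellation of $T$ all go through as you wrote, including in the degenerate cases.
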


We next obtain another upper bound of $A_{0}$-Davis--Wielandt radius of $\left[ 
\begin{array}{cc}
0 & B \\ 
C & 0%
\end{array}%
\right] $.

\begin{theorem}
\label{th310}  \label{22} If $C,B\in {\mathcal{B}}_{A}({\mathcal{H}})$,
then  
\begin{eqnarray*}
&&d\omega _{A_{0}}^{2}\left( \left[ 
\begin{array}{cc}
0 & B \\ 
C & 0%
\end{array}
\right] \right) \\
&\leq &\frac{1}{2}\max \left\{ \omega _{A}\left( \left( C^{\sharp
_{A}}C\right) ^{2}+C^{\sharp _{A}}C\right) ,\omega _{A}\left( B^{\sharp _{A}}B+ \left(
B^{\sharp _{A}}B\right) ^{2} \right) \right\} \\
&&+\frac{1}{2}\max \left\{ \omega _{A}\left( \left( C^{\sharp _{A}}C\right)
^{2}-C^{\sharp _{A}}C\right), \omega _{A}\left( B^{\sharp _{A}}B-\left( B^{\sharp
_{A}}B\right) ^{2} \right) \right\} \\
&& +\omega _{A_{0}}\left( \left[ 
\begin{array}{cc}
0 & C^{\sharp _{A}}CB \\ 
B^{\sharp _{A}}BC & 0%
\end{array}
\right] \right) \text{.}
\end{eqnarray*}
\end{theorem}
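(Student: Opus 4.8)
The plan is to reduce the block-matrix estimate to a single inequality for an arbitrary operator $T\in\mathcal B_{A}(\mathcal H)$ and then to specialise it to $S=\left[\begin{array}{cc}0 & B\\ C & 0\end{array}\right]$, which lies in $\mathcal B_{A_{0}}(\mathcal H\oplus\mathcal H)$ with $S^{\sharp_{A_{0}}}=\left[\begin{array}{cc}0 & C^{\sharp_{A}}\\ B^{\sharp_{A}} & 0\end{array}\right]$ by the block description of $(\cdot)^{\sharp_{A_{0}}}$ recalled above.

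\emph{Step 1.} First I would establish, for every $T\in\mathcal B_{A}(\mathcal H)$, the inequality
\begin{equation*}
d\omega_{A}^{2}(T)\le\tfrac12\,\omega_{A}\!\big((T^{\sharp_{A}}T)^{2}+T^{\sharp_{A}}T\big)+\tfrac12\,\omega_{A}\!\big((T^{\sharp_{A}}T)^{2}-T^{\sharp_{A}}T\big)+\omega_{A}\!\big(T^{\sharp_{A}}T^{2}\big).
\end{equation*}
Fix $z$ with $\|z\|_{A}=1$ and apply Lemma \ref{L} to $x=z$, $v=Tz$, $u=T^{\sharp_{A}}Tz$. Using conjugate symmetry of $\langle\cdot,\cdot\rangle_{A}$ together with $\langle z,T^{\sharp_{A}}Tz\rangle_{A}=\langle Tz,Tz\rangle_{A}=\|Tz\|_{A}^{2}$, the left-hand side of Lemma \ref{L} equals $|\langle Tz,z\rangle_{A}|^{2}+\|Tz\|_{A}^{4}$. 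On the right-hand side, $\|v\|_{A}^{2}=\|Tz\|_{A}^{2}=\langle T^{\sharp_{A}}Tz,z\rangle_{A}$; since $T^{\sharp_{A}}T$ is $A$-selfadjoint and $A$-positive, $\|u\|_{A}^{2}=\langle(T^{\sharp_{A}}T)^{2}z,z\rangle_{A}$; and from the defining identity of $T^{\sharp_{A}}$, $\langle v,u\rangle_{A}=\langle Tz,T^{\sharp_{A}}Tz\rangle_{A}=\langle T^{2}z,Tz\rangle_{A}=\langle T^{\sharp_{A}}T^{2}z,z\rangle_{A}$. As the two numbers $\langle(T^{\sharp_{A}}T)^{2}z,z\rangle_{A}$ and $\langle T^{\sharp_{A}}Tz,z\rangle_{A}$ are real, their maximum equals $\tfrac12\langle((T^{\sharp_{A}}T)^{2}+T^{\sharp_{A}}T)z,z\rangle_{A}+\tfrac12|\langle((T^{\sharp_{A}}T)^{2}-T^{\sharp_{A}}T)z,z\rangle_{A}|$; taking the supremum over $\|z\|_{A}=1$ yields the displayed bound, which is precisely \eqref{eq21}.

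\emph{Step 2.} Applying Step 1 with $A$ replaced by $A_{0}$ and $T$ by $S$, I would compute the block products $S^{\sharp_{A_{0}}}S=\left[\begin{array}{cc}C^{\sharp_{A}}C & 0\\ 0 & B^{\sharp_{A}}B\end{array}\right]$, hence $(S^{\sharp_{A_{0}}}S)^{2}\pm S^{\sharp_{A_{0}}}S=\left[\begin{array}{cc}(C^{\sharp_{A}}C)^{2}\pm C^{\sharp_{A}}C & 0\\ 0 & (B^{\sharp_{A}}B)^{2}\pm B^{\sharp_{A}}B\end{array}\right]$ and $S^{\sharp_{A_{0}}}S^{2}=\left[\begin{array}{cc}0 & C^{\sharp_{A}}CB\\ B^{\sharp_{A}}BC & 0\end{array}\right]$. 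Evaluating the $A_{0}$-numerical radii of the two block-diagonal terms by Lemma \ref{L2} produces exactly the two maxima appearing in the statement, and using $\omega_{A}(-X)=\omega_{A}(X)$ to rewrite $(B^{\sharp_{A}}B)^{2}-B^{\sharp_{A}}B$ as $B^{\sharp_{A}}B-(B^{\sharp_{A}}B)^{2}$ gives the asserted bound.

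I expect the only delicate point to be the choice of substitution in Lemma \ref{L}: taking $v=Tz$, $u=T^{\sharp_{A}}Tz$ is what makes the right-hand side collapse onto $(T^{\sharp_{A}}T)^{2}$, $T^{\sharp_{A}}T$ and $T^{\sharp_{A}}T^{2}$, whereas the more obvious choice $x=Tz$, $v=z$, $u=Tz$ leaves an extraneous $\max\{1,\|Tz\|_{A}^{2}\}$ that cannot be absorbed. The rest is routine bookkeeping with the semi-inner-product identities and the elementary formula $\max\{a,b\}=\tfrac12(a+b+|a-b|)$.
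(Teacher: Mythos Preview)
Your proposal is correct and follows essentially the same route as the paper: the paper applies Lemma~\ref{L} with $x=z$, $v=Sz$, $u=S^{\sharp_{A_{0}}}Sz$ directly to the block matrix $S=\left[\begin{smallmatrix}0&B\\C&0\end{smallmatrix}\right]$, uses the identity $\max\{a,b\}=\tfrac12(a+b+|a-b|)$, and then invokes Lemma~\ref{L2} on the resulting block-diagonal pieces---exactly your Step~1 and Step~2 combined. The only cosmetic difference is that you first isolate the single-operator inequality (your Step~1, which is precisely \eqref{eq21}) before specialising, whereas the paper carries the block matrix through from the start.
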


\begin{proof}
Let $S=\left[ 
\begin{array}{cc}
0 & B \\ 
C & 0
\end{array}%
\right]$ and $z\in {\mathcal{H\oplus H}}$ with $\left\Vert z\right\Vert
_{A_{0}}=1$. Replacing $x=z$, $v=Sz$, $u=S^{\sharp _{A}}Sz$ in
Lemma \ref{L}, we obtain 
\begin{eqnarray*}
&&\left\vert \left\langle Sz,z\right\rangle _{A_{0}}\right\vert
^{2}+\left\Vert Sz\right\Vert _{A_{0}}^{4} 
=\left\vert \left\langle z,Sz\right\rangle _{A_{0}}\right\vert
^{2}+\left\langle z,S^{\sharp _{A}}Sz\right\rangle _{A_{0}}^{2} \\
&\leq &\left\Vert z\right\Vert _{A_{0}}^{2}\left( \max \left\{ \left\Vert
Sz\right\Vert _{A_{0}}^{2},\left\Vert S^{\sharp _{A}}Sz\right\Vert
_{A_{0}}^{2}\right\} +\left\vert \left\langle Sz,S^{\sharp
_{A}}Sz\right\rangle _{A_{0}}\right\vert \right)  \\
&=&\frac{1}{2}\left( \left\Vert Sz\right\Vert _{A_{0}}^{2}+\left\Vert
S^{\sharp _{A}}Sz\right\Vert _{A_{0}}^{2}+\left\vert \left\Vert
Sz\right\Vert _{A_{0}}^{2}-\left\Vert S^{\sharp _{A}}Sz\right\Vert
_{A_{0}}^{2}\right\vert \right) +\left\vert \left\langle
S^{2}z,Sz\right\rangle _{A_{0}}\right\vert  \\
&&(\text{since }\max \{ a,b\}  =\frac{1}{2}(a+b+\vert
a-b\vert) \text{ for real scalars } a,b ) \\
&=&\frac{1}{2}\left( \left\langle \left( S^{\sharp _{A}}S+\left( S^{\sharp
_{A}}S\right) ^{2}\right) z,z\right\rangle _{A_{0}}+\left\vert \left\langle
\left( S^{\sharp _{A}}S-\left( S^{\sharp _{A}}S\right) ^{2}\right)
z,z\right\rangle _{A_{0}}\right\vert \right)  
+\left\vert \left\langle S^{\sharp _{A}}S^{2}z,z\right\rangle
_{A_{0}}\right\vert  \\
&\leq &\frac{1}{2}\left[ \omega _{A_{0}}\left( S^{\sharp _{A}}S+\left(
S^{\sharp _{A}}S\right) ^{2}\right) +\omega _{A_{0}}\left( S^{\sharp
_{A}}S-\left( S^{\sharp _{A}}T\right) ^{2}\right) \right]  
+\omega _{A_{0}}\left( S^{\sharp _{A}}S^{2}\right) \text{}\\
&=&\frac{1}{2}\omega _{A_{0}}\left( \left[ 
\begin{array}{cc}
\left( C^{\sharp _{A}}C\right) ^{2}+C^{\sharp _{A}}C & 0 \\ 
0 & B^{\sharp _{A}}B+\left( B^{\sharp _{A}}B\right) ^{2}
\end{array}
\right] \right) \\
&&+\frac{1}{2}\omega _{A_{0}}\left( \left[ 
\begin{array}{cc}
\left( C^{\sharp _{A}}C\right) ^{2}-C^{\sharp _{A}}C & 0 \\ 
0 & \left( B^{\sharp _{A}}B\right) ^{2}-B^{\sharp _{A}}B%
\end{array}
\right] \right) +\omega _{A_{0}}\left( \left[ 
\begin{array}{cc}
0 & C^{\sharp _{A}}CB \\ 
B^{\sharp _{A}}BC & 0%
\end{array}
\right] \right) \\
&=&\frac{1}{2}\max \left\{ \omega _{A}\left( \left( C^{\sharp _{A}}C\right)
^{2}+C^{\sharp _{A}}C\right) ,\omega _{A}\left( B^{\sharp _{A}}B+\left( B^{\sharp
_{A}}B\right) ^{2} \right) \right\} \\
&&+\frac{1}{2}\max \left\{ \omega _{A}\left( \left( C^{\sharp _{A}}C\right)
^{2}-C^{\sharp _{A}}C\right) ,\omega _{A}\left(  B^{\sharp _{A}}B-\left( B^{\sharp
_{A}}B\right) ^{2} \right) \right\} \\
&& +\omega _{A_{0}}\left( \left[ 
\begin{array}{cc}
0 & C^{\sharp _{A}}CB \\ 
B^{\sharp _{A}}BC & 0%
\end{array}
\right] \right) \text{} \quad \text{(by Lemma \ref{L2})}.
\end{eqnarray*}
This implies the inequality as desired.
\end{proof}

\begin{remark}
Considering $B=C$ in Theorem \ref{22}, and using \eqref{L4} and Lemma \ref%
{L02}, we derive the existing inequality (see \cite[Corollary 2.10]%
{KF_HJM})  
\begin{equation*}
d\omega _{A}^{2}\left( B\right) \leq \frac{1}{2}\left[ \omega _{A}\left(
B^{\sharp _{A}}B+\left( B^{\sharp _{A}}B\right) ^{2} \right) +\omega
_{A}\left( B^{\sharp _{A}}B-\left( B^{\sharp _{A}}B\right) ^{2} \right) %
\right] +\omega _{A}\left( B^{\sharp _{A}}B^{2}\right)
\end{equation*}
and so the inequality in Theorem \ref{th310} is a generalization of \cite[%
Corollary 2.10]{KF_HJM}.
\end{remark}

To develop our final result we need the lemma.

\begin{lemma}
\label{lm310}  If $x, u, v\in\mathcal{H}$ then  
\begin{equation*}
|\langle x,u \rangle_A|^2+|\langle x,v \rangle_A|^2\leq \|x\|^2_A\sqrt{
|\langle u,u \rangle_A|^2+2|\langle u,v \rangle_A|^2+|\langle v,v
\rangle_A|^2}.
\end{equation*}
\end{lemma}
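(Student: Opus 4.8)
The plan is to reduce the inequality to two successive applications of the Cauchy--Schwarz inequality after introducing a suitable auxiliary vector. Write $p=\langle x,u\rangle_A$ and $q=\langle x,v\rangle_A$, and set $w=pu+qv\in\mathcal H$. Using sesquilinearity of $\langle\cdot,\cdot\rangle_A$ one computes $\langle w,x\rangle_A=p\,\overline{\langle x,u\rangle_A}+q\,\overline{\langle x,v\rangle_A}=|p|^2+|q|^2$, so the left-hand side of the asserted inequality is exactly $\langle w,x\rangle_A$. First I would apply the Cauchy--Schwarz inequality for the positive semidefinite form $\langle\cdot,\cdot\rangle_A$ to get $\big(|p|^2+|q|^2\big)^2=|\langle w,x\rangle_A|^2\le\|w\|_A^2\,\|x\|_A^2$.

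Next I would bound $\|w\|_A^2$. Expanding gives $\|w\|_A^2=|p|^2\|u\|_A^2+|q|^2\|v\|_A^2+2\,\mathrm{Re}\big(p\,\overline q\,\langle u,v\rangle_A\big)\le |p|^2\|u\|_A^2+|q|^2\|v\|_A^2+2|p|\,|q|\,|\langle u,v\rangle_A|$. The key step is to view the last expression as the Euclidean inner product in $\mathbb R^3$ of the vectors $\big(|p|^2,\,|q|^2,\,\sqrt2\,|p|\,|q|\big)$ and $\big(\|u\|_A^2,\,\|v\|_A^2,\,\sqrt2\,|\langle u,v\rangle_A|\big)$, and then apply Cauchy--Schwarz in $\mathbb R^3$; since $|p|^4+|q|^4+2|p|^2|q|^2=(|p|^2+|q|^2)^2$, this yields $\|w\|_A^2\le \big(|p|^2+|q|^2\big)\sqrt{\|u\|_A^4+2|\langle u,v\rangle_A|^2+\|v\|_A^4}$.

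Combining the two estimates gives $\big(|p|^2+|q|^2\big)^2\le \|x\|_A^2\,\big(|p|^2+|q|^2\big)\sqrt{\|u\|_A^4+2|\langle u,v\rangle_A|^2+\|v\|_A^4}$. If $|p|^2+|q|^2=0$ the desired inequality is trivially true; otherwise divide both sides by $|p|^2+|q|^2>0$. Finally, since $\langle u,u\rangle_A=\|u\|_A^2\ge 0$ and $\langle v,v\rangle_A=\|v\|_A^2\ge 0$, we have $\|u\|_A^4=|\langle u,u\rangle_A|^2$ and $\|v\|_A^4=|\langle v,v\rangle_A|^2$, which converts the bound into exactly the claimed form. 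I do not anticipate a serious obstacle; the only points needing care are the $\sqrt2$-splitting of the cross term $2|p|\,|q|\,|\langle u,v\rangle_A|$ that makes the second Cauchy--Schwarz step collapse perfectly, and the trivial degenerate case $|p|^2+|q|^2=0$ (equivalently $x\perp_A u$ and $x\perp_A v$).
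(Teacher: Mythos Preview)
Your proof is correct. The auxiliary vector $w=pu+qv$ and the first Cauchy--Schwarz step are standard, and the clever point---splitting $2|p|\,|q|\,|\langle u,v\rangle_A|$ as $(\sqrt2\,|p|\,|q|)(\sqrt2\,|\langle u,v\rangle_A|)$ so that the $\mathbb R^3$ Cauchy--Schwarz application collapses via $|p|^4+|q|^4+2|p|^2|q|^2=(|p|^2+|q|^2)^2$---is exactly what is needed. The degenerate case and the identification $\|u\|_A^4=|\langle u,u\rangle_A|^2$ are handled correctly.

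As for comparison: the paper states this lemma without proof and without citation, so there is no argument in the paper to compare against. Your argument therefore fills a genuine gap in the exposition.
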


\begin{theorem}
\label{th312}  \label{PP}\bigskip\ If $C,B\in {\mathcal{B}}_{A}({\mathcal{H}%
})$ then  
\begin{eqnarray*}
d\omega _{A_{0}}^{4}\left( \left[ 
\begin{array}{cc}
0 & B \\ 
C & 0%
\end{array}
\right] \right) &\leq &\max \left\{ \omega _{A}\left( \left( C^{\sharp
_{A}}C\right) ^{2}+\left( C^{\sharp _{A}}C\right) ^{4}\right) ,\omega
_{A}\left( \left( B^{\sharp _{A}}B\right) ^{2}+\left( B^{\sharp
_{A}}B\right) ^{4}\right) \right\} \\
&&+2\omega _{A_{0}}^{2}\left( \left[ 
\begin{array}{cc}
0 & C^{\sharp _{A}}CB \\ 
B^{\sharp _{A}}BC & 0%
\end{array}
\right] \right) \text{.}
\end{eqnarray*}
\end{theorem}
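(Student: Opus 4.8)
The plan is to run the block-matrix argument of Theorem~\ref{th310} but with Lemma~\ref{lm310} in place of Lemma~\ref{L}. Write $S=\left[\begin{smallmatrix}0 & B\\ C & 0\end{smallmatrix}\right]\in{\mathcal B}_{A_0}({\mathcal H}\oplus{\mathcal H})$ and, as in Theorem~\ref{th310}, denote its $A_0$-adjoint by $S^{\sharp_A}$; the block-adjoint formula of \cite{Rout} gives $S^{\sharp_A}=\left[\begin{smallmatrix}0 & C^{\sharp_A}\\ B^{\sharp_A} & 0\end{smallmatrix}\right]$. Fix $z\in{\mathcal H}\oplus{\mathcal H}$ with $\|z\|_{A_0}=1$ and apply Lemma~\ref{lm310} with $x=z$, $u=Sz$, $v=S^{\sharp_A}Sz$. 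Since $\langle z,S^{\sharp_A}Sz\rangle_{A_0}=\langle Sz,Sz\rangle_{A_0}=\|Sz\|_{A_0}^2\ge0$ and $|\langle z,Sz\rangle_{A_0}|=|\langle Sz,z\rangle_{A_0}|$, the left side of Lemma~\ref{lm310} equals $|\langle Sz,z\rangle_{A_0}|^2+\|Sz\|_{A_0}^4$, so
\[
|\langle Sz,z\rangle_{A_0}|^2+\|Sz\|_{A_0}^4\;\le\;\sqrt{\;\|Sz\|_{A_0}^4+2\bigl|\langle Sz,S^{\sharp_A}Sz\rangle_{A_0}\bigr|^2+\|S^{\sharp_A}Sz\|_{A_0}^4\;}\,.
\]

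Next I would estimate the three terms under the radical. The operator $S^{\sharp_A}S$ is $A_0$-selfadjoint and $A_0$-positive, so: $\|Sz\|_{A_0}^4=\langle S^{\sharp_A}Sz,z\rangle_{A_0}^2\le\langle (S^{\sharp_A}S)^2z,z\rangle_{A_0}$ by Lemma~\ref{L1.2}; moreover $\langle Sz,S^{\sharp_A}Sz\rangle_{A_0}=\langle S^{\sharp_A}S^2z,z\rangle_{A_0}$ (move the adjoint across, using $\langle Sa,b\rangle_{A_0}=\langle a,S^{\sharp_A}b\rangle_{A_0}$ twice), whence $|\langle Sz,S^{\sharp_A}Sz\rangle_{A_0}|^2\le\omega_{A_0}^2(S^{\sharp_A}S^2)$; and $\|S^{\sharp_A}Sz\|_{A_0}^2=\langle(S^{\sharp_A}S)^2z,z\rangle_{A_0}$ (because $S^{\sharp_A}S$ is $A_0$-selfadjoint, its reduced $A_0$-adjoint agrees with it inside every $A_0$-inner product), so $\|S^{\sharp_A}Sz\|_{A_0}^4\le\langle(S^{\sharp_A}S)^4z,z\rangle_{A_0}$ by Lemma~\ref{L1.2} applied to the $A_0$-positive operator $(S^{\sharp_A}S)^2$. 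Combining the first and third estimates under one $A_0$-numerical radius, squaring the displayed inequality, and taking the supremum over $\|z\|_{A_0}=1$ gives
\[
d\omega_{A_0}^4(S)\;\le\;\omega_{A_0}\bigl((S^{\sharp_A}S)^2+(S^{\sharp_A}S)^4\bigr)+2\,\omega_{A_0}^2\bigl(S^{\sharp_A}S^2\bigr).
\]

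It then remains to identify the two operators on the right in block form. A direct computation gives $S^{\sharp_A}S=\left[\begin{smallmatrix}C^{\sharp_A}C & 0\\ 0 & B^{\sharp_A}B\end{smallmatrix}\right]$, so $(S^{\sharp_A}S)^2+(S^{\sharp_A}S)^4$ is the block-diagonal matrix with entries $(C^{\sharp_A}C)^2+(C^{\sharp_A}C)^4$ and $(B^{\sharp_A}B)^2+(B^{\sharp_A}B)^4$, and Lemma~\ref{L2} turns its $A_0$-numerical radius into the maximum in the statement; likewise $S^2=\left[\begin{smallmatrix}BC & 0\\ 0 & CB\end{smallmatrix}\right]$ yields $S^{\sharp_A}S^2=\left[\begin{smallmatrix}0 & C^{\sharp_A}CB\\ B^{\sharp_A}BC & 0\end{smallmatrix}\right]$, which is exactly the off-diagonal block in the second term. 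Substituting these identities gives the asserted inequality.

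The main obstacle is the $A_0$-adjoint bookkeeping, especially the identity $\|S^{\sharp_A}Sz\|_{A_0}^2=\langle(S^{\sharp_A}S)^2z,z\rangle_{A_0}$: this needs the observation that an $A_0$-selfadjoint operator $M$ satisfies $A_0(M^{\sharp_{A_0}}-M)=0$, hence acts like $M^{\sharp_{A_0}}$ inside every $A_0$-inner product, even though $M=M^{\sharp_{A_0}}$ may fail. One also must pick $u,v$ in Lemma~\ref{lm310} precisely as above so that the degree-four quantities that appear are exactly $(S^{\sharp_A}S)^2$, $(S^{\sharp_A}S)^4$ and $S^{\sharp_A}S^2$; after that, the power inequality of Lemma~\ref{L1.2}, passage to $\omega_{A_0}$, and the block multiplications are routine.
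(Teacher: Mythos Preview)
Your proof is correct and follows essentially the same route as the paper: apply Lemma~\ref{lm310} with $x=z$, $u=Sz$, $v=S^{\sharp_A}Sz$, use Lemma~\ref{L1.2} to pass to $(S^{\sharp_A}S)^2$ and $(S^{\sharp_A}S)^4$, take the supremum, and then read off the block entries via Lemma~\ref{L2}. The only cosmetic difference is that the paper squares first and works with $\bigl(|\langle Sz,z\rangle_{A_0}|^2+\|Sz\|_{A_0}^4\bigr)^2$ throughout, whereas you display the square-root form of Lemma~\ref{lm310} and square at the end.
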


\begin{proof}
Let $S=\left[  
\begin{array}{cc}
0 & B \\ 
C & 0
\end{array}
\right] $ and $z\in {\mathcal{H\oplus H}}$ with $\left\Vert z\right\Vert 
_{A_{0}}=1$, then
\begin{eqnarray*}
&&\left( \left\vert \left\langle Sz,z\right\rangle _{A_{0}}\right\vert
^{2}+\left\Vert Sz\right\Vert _{A_{0}}^{4}\right) ^{2} 
=\left( \left\vert \left\langle z,Sz\right\rangle _{A_{0}}\right\vert
^{2}+\left\vert \left\langle z,S^{\sharp _{A}}Sz\right\rangle
_{A_{0}}\right\vert ^{2}\right) ^{2} \\
&\leq & \left\Vert z \right\Vert _{A_{0}}^{4}\left( \left\vert \left\langle
Sz,Sz\right\rangle _{A_{0}}\right\vert ^{2}+2\left\vert \left\langle
Sz,S^{\sharp _{A}}Sz\right\rangle _{A_{0}}\right\vert ^{2}+\left\vert
\left\langle S^{\sharp _{A}}Sz,S^{\sharp _{A}}Sz\right\rangle
_{A_{0}}\right\vert ^{2}\right)  
\text{(by Lemma \ref{lm310})} \\
&=&\left\vert \left\langle S^{\sharp _{A}}Sz,z\right\rangle
_{A_{0}}\right\vert ^{2}+2\left\vert \left\langle S^{\sharp
_{A}}S^{2}z,z\right\rangle _{A_{0}}\right\vert ^{2}+\left\vert \left\langle
\left( S^{\sharp _{A}}S\right) ^{2}z,z\right\rangle _{A_{0}}\right\vert ^{2}
\\
&\leq &\left\langle \left( S^{\sharp _{A}}S\right) ^{2}z,z\right\rangle
_{A_{0}}+\left\langle \left( S^{\sharp _{A}}S\right) ^{4}z,z\right\rangle
_{A_{0}}+2\left\vert \left\langle S^{\sharp _{A}}S^{2}z,z\right\rangle
_{A_{0}}\right\vert ^{2} \quad \text{(by Lemma \ref{L1.2})} \\
&\leq &\omega _{A_{0}}\left( \left( S^{\sharp _{A}}S\right) ^{2}+\left(
S^{\sharp _{A}}S\right) ^{4}\right) +2\omega _{A_{0}}^{2}\left( S^{\sharp
_{A}}S^{2}\right) \\
&=&\omega _{A_{0}}\left( \left[ 
\begin{array}{cc}
\left( C^{\sharp _{A}}C\right) ^{2}+\left( C^{\sharp _{A}}C\right) ^{4} & 0
\\ 
0 & \left( B^{\sharp _{A}}B\right) ^{2}+\left( B^{\sharp _{A}}B\right) ^{4}%
\end{array}
\right] \right) 
 +2\omega _{A_{0}}^{2}\left( \left[ 
\begin{array}{cc}
0 & C^{\sharp _{A}}CB \\ 
B^{\sharp _{A}}BC & 0%
\end{array}
\right] \right) \\
&=&\max \left\{ \omega _{A}\left( \left( C^{\sharp _{A}}C\right) ^{2}+\left(
C^{\sharp _{A}}C\right) ^{4}\right) ,\omega _{A}\left( \left( B^{\sharp
_{A}}B\right) ^{4}+\left( B^{\sharp _{A}}B\right) ^{2}\right)\right\} \\
&&+2\omega _{A_{0}}^{2}\left( \left[ 
\begin{array}{cc}
0 & C^{\sharp _{A}}CB \\ 
B^{\sharp _{A}}BC & 0%
\end{array}
\right] \right) \text{} \quad \text{(by Lemma \ref{L2})}.
\end{eqnarray*}
Therefore, this gives the inequality as desired.  

\end{proof}

\begin{remark}
Considering $B=C$ in Theorem \ref{PP}, and using \eqref{L4} and Lemma \ref%
{L02}, we derive the existing inequality (see \cite[Corollary 2.7]%
{KF_HJM}) 
\begin{equation*}
d\omega _{A}^{4}\left( B\right) \leq \omega _{A}\left( \left( B^{\sharp
_{A}}B\right) ^{2}+\left( B^{\sharp _{A}}B\right) ^{4}\right) +2\omega
_{A}^{2}\left( B^{\sharp _{A}}B^{2}\right) 
\end{equation*}%
and so Theorem \ref{th312} is a generalization of \cite[Corollary 2.7]%
{KF_HJM}.
\end{remark}


\noindent \textbf{Declarations. } 
All authors contributed to the preparation of the article.  All authors read and approved the final manuscript.

\noindent \textbf{Competing interests.}  Authors declare that there is no competing interest.

\end{document}